\documentclass[11pt,twoside]{amsbook}
\usepackage[all]{xy}   
        \usepackage {amssymb,latexsym,amsthm,amsmath,xcolor}
        
        \topmargin=1.2cm
        \textheight = 7.8in
        \textwidth = 6in
        \setlength{\oddsidemargin}{.2cm}
        \setlength{\evensidemargin}{.2cm}
        \numberwithin{equation}{section}
        \numberwithin{section}{chapter}

\long\def\symbolfootnote[#1]#2{\begingroup%
\def\thefootnote{\fnsymbol{footnote}}\footnote[#1]{#2}\endgroup}

\newcommand{\Q}{\ensuremath{\mathbb{Q}}}

\newcommand{\osum}{\ensuremath{\bigcirc\!\!\!\!\!\!\!\perp}}
\newcommand{\tr}{\ensuremath{{}^t\!}}
\newcommand{\tra}{\ensuremath{{}^t}}
\newcommand{\C}{\mathbb C}
\newcommand{\B}{\mathfrak B}

\newcommand{\GL}{\textup{GL}}
\newcommand{\SL}{\textup{SL}}
\newcommand{\Hom}{\textup{Hom}}
\newcommand{\End}{\textup{End}}
\newcommand{\diag}{\textup{diag}}
\newcommand{\gal}{\textup{Gal}}

\newcommand{\tensor}{\otimes}
\makeatletter
\def\imod#1{\allowbreak\mkern10mu({\operator@font mod}\,\,#1)}
\makeatother

\newtheorem{theorem}{\bf Theorem}[chapter]
\newtheorem{lemma}[theorem]{\bf Lemma}
\newtheorem{corollary}[theorem]{\bf Corollary}
\newtheorem{proposition}[theorem]{\bf Proposition}
\newtheorem{definition}[theorem]{\bf Definition}
\newtheorem*{theorem*}{Theorem}
\theoremstyle{definition}
\newtheorem{remark}[theorem]{\bf Remark}
\newtheorem{exercise}[theorem]{\bf Exercise}
\newtheorem{example}[theorem]{\bf Example}
\numberwithin{equation}{section}

\newcommand{\ignore}[1]{}

\newcommand{\mynote}[1]{}

\begin{document}
\setcounter{section}{0}
\title{\Huge{\color{blue} Representation Theory of Finite Groups}}
\author{\Large Dr. Anupam Singh \\ Professor (Mathematics, IISER Pune)\\ \vskip4cm {\large {\color{teal}Indian Institute of Science Education and Research (IISER) 
Pune, \\  Dr Homi Bhabha Road, Pashan, Pune 411008 INDIA \\ 
email : anupam@iiserpune.ac.in}\\\vskip4cm
Revised November 2022\\
First Draft December 2009}}
\date{}

\maketitle
\tableofcontents

\chapter{Introduction}
\section{Written in December 2022}

Since the first version of these notes, the author has received several feedback from students as well as teachers (two places, IIT Kanpur and IMSc Chennai, are worth acknowledging). Over the years the author got opportunities to teach a part of the material in various workshops and a full course at IISER Pune. The author expresses his gratitude to all students who were part of these exciting courses. The present version is revised with some more material added. The most important part on Induced Representation which was missing earlier is added now along with several more examples. Please let me know your feedback as and when you refer to these notes.

\section{Written in December 2009}
This is a class note for the course on the Representation Theory of Finite Groups taught by the author at IISER Pune to undergraduate students.  
We study the character theory of finite groups and illustrate how to get more information about groups. 
Burnside's theorem is one of the very good applications. 
It states that every group of order $p^aq^b$, where $p,q$ are distinct primes, is solvable.
We will always consider finite groups unless stated otherwise.
All vector spaces will be considered over general fields in the beginning but for the purpose of character theory, we 
assume the field is that of complex numbers.

We assume knowledge of the basic group theory and linear algebra. The point of view I projected to the students in the class is that
we have studied linear algebra hence we are familiar with the groups $\GL(V)$, the general linear 
group, or $\GL_n(k)$ in the matrix
notation. The idea of representation theory is to compare (via homomorphisms) finite (abstract) groups with these linear groups (somewhat concrete) 
and hope to a gain better understanding of them.

The students were asked to read about ``linear groups'' from the book by Alperin and Bell (mentioned in the bibliography) from
the chapter with the same title. We also revised, side-by-side in the class, Sylow's Theorem, Solvable groups and motivated ourselves for Burnside's $pq$-theorem.

The aim to start with an arbitrary field was to give the feeling that the theory is dependent on the base field and it gets considerably
complicated if we move away from characteristic $0$ algebraically closed field.
This we illustrate by giving an example of a higher dimensional irreducible representation of cyclic group over $\mathbb Q$ 
while all its irreducible representations are one dimensional over $\mathbb C$.
This puts things in perspective as to why we are doing the theory over $\mathbb C$ and motivates us 
to develop the ``Character Theory''.

\chapter*{}
\vskip10cm
\begin{center}
{\bf \Huge PART -- I}
\end{center}
\vskip5cm
This part contains basic definitions and some important foundational Theorems.
\chapter{Representation of a Group}
Let $G$ be a finite group.
Let $k$ be a field. We will assume that characteristic of $k$ is $0$, e.g., $k=\mathbb C$, $\mathbb 
R$ or $\mathbb Q$. Although often it would be enough to assume $char(k)\nmid |G|$. 
{\color{teal} 
\begin{definition}[Representation]
A {\bf representation of $G$ over $k$} is a homomorphism $\rho \colon G\rightarrow \GL(V)$ where 
$V$ 
is a vector space of finite dimension over the field $k$. 
The vector space $V$ is called {\bf representation space} of $G$, and its dimension is the {\bf 
dimension of representation}. 
\end{definition}}
\noindent Strictly speaking, the pair $(\rho, V) $ is called a representation of $G$ over the field 
$k$. 
However, if there is no confusion we would simply call either $\rho$ or $V$, a representation of 
$G$. 

Let us fix a basis $\{v_1,v_2,\ldots,v_n\}$ of $V$. Then, each $\rho(g)$ can be written in a matrix 
form with respect to this basis. This would give a map $\tilde\rho\colon G\rightarrow \GL_n(k)$ 
which is a group homomorphism. Thus, a representation of $G$ can also be thought of as such group 
homomorphism to $\GL_n(k)$. This idea is often useful in computing things as is the case in Linear 
Algebra.
{\color{teal} 
\begin{definition}[Invariant Subspace]
Let $\rho$ be a representation of $G$, and $W\subset V$ be a subspace. 
The space $W$ is said to be a {\bf $G$-invariant (or $G$-stable)} subspace if $\rho(g)(w)\in W$ 
$\forall w\in W$ and $\forall g\in G$.
\end{definition}}
\noindent Notice that once we have a $G$-invariant subspace $W$ we can restrict the representation 
to this subspace and define another representation $\rho_W \colon G\rightarrow \GL(W)$ where 
$\rho_W(g)=\rho(g)|_{W}$. Hence, $W$ is also called a {\bf subrepresentation}.

\begin{example}[Trivial Representation] Let $G$ be a group, and $k$ a field. Let $V$ be a vector 
space over $k$. 
Then $\rho(g)=1$ for all $g\in G$ is a representation. 
This is called trivial representation. In this case, every subspace of $V$ is an invariant subspace.
\end{example}

\begin{example}
Let $G=\mathbb Z/m\mathbb Z$ and $k=\mathbb C$. Let $V$ be a vector space of dimension $n$.
\begin{enumerate}
\item Suppose $\dim(V)=1$. Define $\rho_r\colon \mathbb Z/m\mathbb Z \rightarrow \mathbb C^*$ by 
$1\mapsto e^{\frac{2\pi i r}{m}}$ for $1\leq r \leq m-1$.
\item Define $\rho \colon \mathbb Z/m\mathbb Z \rightarrow \GL(V)$ by $1\mapsto T$ where $T\in 
\GL(V)$ satisfying $T^m=1$. For example, in the case $\dim(V)=2$ we can take $T= 
\diag\{e^{\frac{2\pi ir_1}{m}}, e^{\frac{2\pi ir_2}{m}}\}$. There is a general theorem
in Linear Algebra which says that any such matrix, over $\mathbb C$, is diagonalizable. 
\end{enumerate}
\end{example}

\begin{example}
Let $G=\mathbb Z/m\mathbb Z$ and $k=\mathbb R$. 
Let $V=\mathbb R^2$ with basis $\{e_1,e_2\}$. Then, we have representations of $\mathbb Z/m\mathbb 
Z$:
$$\rho_r \colon 1 \mapsto \left[\begin{matrix} \cos{\frac{2\pi r}{m}} & -\sin{\frac{2\pi r}{m}} \\ 
\sin{\frac{2\pi r}{m}} & \cos{\frac{2\pi r}{m}}\end{matrix}\right]$$
where $0\leq r\leq m-1$. Notice that we get $m$ distinct representations.
\end{example}
\begin{example}
Let $\phi \colon G\rightarrow H$ be  a group homomorphism. Let $\rho$ be a representation of $H$. 
Then $\rho\circ\phi$ is a representation of $G$.
\end{example}

\begin{example}\label{dihedralrep}
Let $G=D_m=\langle a,b \mid a^m=1=b^2, ab=ba^{m-1} \rangle$ be the dihedral group with $2m$ 
elements.
We have representations $\rho_r$ defined by:
$$a \mapsto \left[\begin{matrix} \cos{\frac{2\pi r}{m}} & -\sin{\frac{2\pi r}{m}} \\ 
\sin{\frac{2\pi r}{m}} & \cos{\frac{2\pi r}{m}}\end{matrix}\right], 
b \mapsto \left[\begin{matrix} 0 & 1 \\ 
1 & 0\end{matrix}\right].$$
Verify that these are group homomorphisms. Notice that the earlier representation of $\mathbb 
Z/m\mathbb Z$ is a restriction of this.
\end{example}

\begin{example}[Permutation Representation of $S_n$]\label{perm-rep} 
Let $S_n$ be the symmetric group on $n$ symbols, and $k$ be a field. 
Let $V=k^n$ with the standard basis $\{e_1,\ldots, e_n\}$. We define a representation $\rho$ of 
$S_n$ on $V$ by its action on the basis vectors as follows: 
$\rho(\sigma)(e_i)=e_{\sigma(i)}$ for $\sigma \in S_n$. Notice that while defining this 
representation we don't need to specify any field. That is if we write the matrix of this 
representation the entries are from $\mathbb Z$. 
\end{example}

\begin{example}[Group Action]
Let $G$ be a group and $k$ a field. Suppose $G$ is acting on a finite set $X$, i.e., we have a 
group action $G\times X \rightarrow X$. We denote by $k[X]=\{ f \mid f\colon X\rightarrow k \}$, the 
set of all maps. Clearly, $k[X]$ is a vector space of dimension $|X|$. The elements $e_x\colon X 
\rightarrow k$ defined by 
$e_x(x)=1$ and $e_x(y)=0$ if $x\neq y$, form a basis of $k[X]$. This action gives rise to a 
representation of $G$ on the space $k[X]$ as follows: $\rho \colon G\rightarrow \GL(k[X])$ given by 
$(\rho(g)(f))(x)=f(g^{-1}x)$ for $x\in X$.

If we take $G=S_n$ and $X=\{1, 2, \ldots, n\}$ we get back the Example~\ref{perm-rep}.
\end{example}

\begin{example}[Regular Representation] Let $G$ be a group of order $n$, and $k$ a field. Let 
$V=k[G]$ be the $n$-dimensional vector space with basis as elements of the group itself. We define 
$L\colon G\rightarrow \GL(k[G])$ by $L(g)(f)(h)=f(g^{-1}h)$, called 
the left regular representation. Similarly, $R(g)(f)(h)=f(hg)$, defines right regular 
representation $R$ of $G$. Prove that these representations are injective. Further, these 
representations are obtained by 
the action of $G$ on the
set $X=G$ by left multiplication or right multiplication. In fact, one can make $k[G]$ an algebra by 
defining the following multiplication:
$$(f*f')(t)=\sum_{x\in G} f(x)f'(x^{-1}t).$$
Note that this is the convolution multiplication (not the usual point-wise multiplication). This 
algebra $k[G]$ is called the {\bf group algebra} of $G$ over $k$. 
\end{example}

\begin{example}
Let $G=Q_8=\{\pm 1, \pm i, \pm j, \pm k\}$, and $k=\mathbb C$. We define a $2$-dimensional 
representation of $Q_8$ by:
$$i\mapsto \left[\begin{matrix} 0 & 1 \\ -1 & 0\end{matrix}\right], j\mapsto  \left[\begin{matrix} 0 
& i \\ i & 0\end{matrix}\right].$$
\end{example}

\begin{example}[Galois Theory]
Let $K=\mathbb Q(\theta)$ be a finite extension of $\mathbb Q$. Let $G=\gal(K/Q)$. We take $V=K$, a 
finite-dimensional vector space over $\mathbb Q$. We have a natural representation of $G$ as 
follows: $\rho \colon G \rightarrow \GL(K)$ defined by 
$\rho(g)(x)=g(x)$. Take $\theta=\zeta$, some $n$th root of unity, and show that the cyclic groups 
$\mathbb Z/m\mathbb Z$ have representations over the field $\mathbb Q$ of possibly dimensions more 
than $2$. This could be thought of as a reinterpretation of the statement of the Kronecker-Weber 
theorem.
\end{example}
{\color{teal}
\begin{definition}[Equivalence of Representations]
Let  $(\rho,V)$ and $(\rho',V')$ be two representations of $G$ over a field $k$. 
The representations  $(\rho,V)$ and $(\rho',V')$ are said to be {\bf $G$-equivalent (or 
equivalent)} if there exists a linear isomorphism $T\colon V\rightarrow V'$ such that 
$\rho'(g)=T\rho(g)T^{-1}$ for all $g\in G$. The following diagram helps in understanding this requirement: 
\[\xymatrix{V \ar[d]_{\rho(g)} \ar[r]^T & V' \ar[d]^{\rho'(g)} \\
V \ar[r]^T & V'
}\]
\end{definition}}
\noindent Let $\rho$ be a representation. Fix a basis of $V$, say $\{e_1,\ldots, e_n\}$. Then 
$\rho$ gives rise to a map $G\rightarrow
\GL_n(k)$ which is a group homomorphism. Notice that, if we change the basis of $V$ then we may get a different map for the same $\rho$. However, these would be equivalent as representations, i.e. 
these would differ by conjugation with respect to a fixed matrix (namely the base change matrix).

We have seen that a representation can have possibly a subrepresentation. This motivates us to define:
{\color{teal}
\begin{definition}[Irreducible Representation]
A representation $(\rho, V)$ of $G$ is called {\bf irreducible} if it has no proper $G$-invariant 
subspace, i.e., the only $G$-invariant subspaces are $0$ and $V$.
\end{definition}}

\begin{example} 
The trivial representation is irreducible if and only if it is one-dimensional.
\end{example}
\begin{example}\label{permutation} 
In the case of Permutation representation the subspace $W=<(1,1,\ldots,1)>$ and $W'=\{(x_1,\ldots 
,x_n) \mid \sum x_i = 0\}$ are two irreducible $S_n$ invariant subspaces. In fact, this 
representation is a direct sum of these two, and hence completely reducible.
\end{example}
\begin{example} 
One-dimensional representation is always irreducible. If $|G|\geq 2$ then the regular representation is not irreducible.
\end{example}

\begin{exercise} Let $G$ be a finite group. In the definition of a representation, let us not assume 
that the vector space $V$ is finite-dimensional. Prove that there exists a finite-dimensional 
$G$-invariant subspace of $V$. 
\end{exercise}
Hint: Fix $v\in V$ non-zero and take $W$ the subspace generated by $\rho(g)(v)~\forall g\in G$. 

\begin{exercise} A representation of dimension $1$ is a map $\rho \colon G\rightarrow k^*$. 
There are exactly two one-dimensional representations of $S_n$ over $\mathbb C$. There are exactly 
$n$ one-dimensional representations of the cyclic group $\mathbb Z/n\mathbb Z$ over $\mathbb C$.
\end{exercise}

\begin{exercise}
Prove that every finite group can be embedded inside symmetric group $S_n$, for some $n$ as well as 
a linear group $\GL_m$ for some $m$.
\end{exercise}
Hint: Make use of the regular representation. This representation is also called ``God-given'' 
representation. Later in the course, we will see why it's so.
\begin{exercise}
Is the above exercise true if we replace $S_n$ by $A_n$ and $\GL_m$ by $\SL_m$? 
\end{exercise}

\begin{exercise}
Prove that the cyclic group $\mathbb Z/p\mathbb Z$ has a representation of dimension $p-1$ over $\mathbb Q$.
\end{exercise}
Hint: Make use of the cyclotomic field extension $\mathbb Q(\zeta_p)$ and consider the map left 
multiplication by $\zeta_p$. This exercise shows that representation theory is deeply connected to 
the Galois Theory of field extensions.

\section{Commutator Subgroup and One Dimensional Representations}
Let $G$ be a finite group. Consider the set of elements $\{xyx^{-1}y^{-1}\mid x,y\in G\}$ and $G'$  the subgroup generated by this subset. This subgroup is called the {\bf commutator subgroup of $G$}.  We list some of the properties of this subgroup as an exercise here.
\begin{exercise}
\begin{enumerate}
\item $G'$ is a normal subgroup.
\item $G/G'$ is Abelian.
\item $G'$ is smallest subgroup of $G$ such that $G/G'$ is Abelian.
\item $G'=1$ if and only if $G$ is Abelian.
\item For $G=S_n$, $G'=A_n$; $G=D_n=<r,s \mid r^n=1=s^2, srs=r^{-1}>$ we have $G'=<r> \cong \mathbb Z/n\mathbb Z$ and $Q_8'=\mathcal Z(Q_8)$.
\end{enumerate}
\end{exercise}
Let $\widehat G$ be the set of all one-dimensional representations of $G$ over $\mathbb C$, i.e., the set of all group homomorphisms from $G$ to $\mathbb C^*$.
For $\chi_1,\chi_2 \in \widehat G$ we define multiplication by:
$$(\chi_1\chi_2)(g)=\chi_1(g)\chi_2(g).$$
\begin{exercise}
Prove that $\widehat G$ is an Abelian group.
\end{exercise}
We observe that for a $\chi\in \widehat G$ we have $G'\subset ker(\chi)$. Hence we can  prove,
\begin{exercise} 
Show that $\widehat{G} \cong G/G'$. 
\end{exercise}
\begin{exercise}
Calculate directly $\widehat G$ for $G=\mathbb Z/n\mathbb Z, S_n$ and $D_n$.
\end{exercise}

Let $G$ be a group. The group $G$ is called {\bf simple} if $G$ has no proper normal subgroup.
\begin{exercise}
\begin{enumerate}
\item Let $G$ be an Abelian simple group. Prove that $G$ is isomorphic to $\mathbb Z/p\mathbb Z$ where $p$ is a prime.
\item Let $G$ be a simple non-Abelian group. Then  $G=G'$.
\end{enumerate}
\end{exercise}

\chapter{Maschke's Theorem}
 
From the previous chapter, we recall,
{\color{teal}
\begin{definition}[Irreducible Representation]
A representation $(\rho,V)$ of $G$ is called {\bf irreducible} if it has no proper $G$-invariant 
subspace, i.e., only $G$-invariant subspaces are $0$ and $V$.
\end{definition}}

Let $(\rho,V)$ and $(\rho',V')$ be two representations of $G$ over a field $k$. We can define {\bf 
direct sum} of these two representations $(\rho\oplus\rho', V\oplus V')$ as follows: 
$\rho\oplus\rho' \colon G\rightarrow \GL(V \oplus V')$ given by 
$(\rho\oplus\rho')(g)(v,v') = (\rho(g)(v), \rho'(g)(v'))$ where $v\in V$ and $v'\in V'$. In the 
matrix notation, if we have two representations $\rho \colon G \rightarrow \GL_n(k)$ and 
$\rho'\colon 
G\rightarrow \GL_m(k)$, then $\rho\oplus\rho'$ is given (with respect to an appropriate basis) by 
$$g\mapsto \left[\begin{matrix} \rho(g) & 0 \\ 0 & \rho'(g)\end{matrix}\right].$$
This motivates us to look at those ``nice'' representations which can be obtained by taking the direct 
sum of irreducible ones.
{\color{teal}
\begin{definition}[Completely Reducible]
A representation $(\rho, V)$ of $G$ is called {\bf completely reducible} if it is a direct sum of 
irreducible ones. Equivalently, if $V = W_1 \oplus \ldots \oplus W_r$, where each $W_i$ is 
$G$-invariant irreducible representation.
\end{definition}}
\noindent This brings us to the following questions:
\begin{enumerate}
\item Is it true that every representation is a direct sum of irreducible ones?
\item How many irreducible representations are there for a given $G$ over $k$?
\end{enumerate}
The answer to the first question is affirmative for finite groups when $char(k)\nmid |G|$. 
This is Maschke's theorem proved below (it is also true for Compact Groups where it is called 
the Peter-Weyl Theorem). The other case when $char(k) \mid |G|$ comes under the subject of ``Modular 
Representation Theory'', and the answer to the first question is `No'. We will answer the second 
question over the field of complex numbers (and possibly over $\mathbb R$) in this course which is 
the subject of ``Character Theory''. For the theory over $\mathbb Q$, the subject is called 
`rationality questions' (refer to the book by Serre). 

 {\color{red}
\begin{theorem}[Maschke's Theorem]
Let $k$ be a field and $G$ be a finite group. Suppose $char(k)\nmid |G|$, i.e. $|G|$ is invertible in 
the field $k$. Let $(\rho, V)$ be a finite-dimensional representation of $G$. Let $W$ be a 
$G$-invariant subspace of $V$. Then, there exists $W'$, a $G$-invariant subspace, such that 
$V=W\oplus W'$.

Conversely, if $char(k)\mid |G|$ then there exists a representation, namely the regular 
representation, and a proper $G$-invariant subspace which does not have a $G$-invariant complement.
\end{theorem}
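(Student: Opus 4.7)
The plan for the forward direction is the classical averaging trick. I would first pick any vector space complement of $W$ in $V$ (which exists purely by linear algebra) and let $\pi \colon V \to V$ be the linear projection onto $W$ along that complement. This $\pi$ need not commute with the $G$-action, so I would symmetrize it by setting
$$\tilde\pi \;=\; \frac{1}{|G|}\sum_{g \in G} \rho(g)\,\pi\,\rho(g)^{-1}.$$
The hypothesis $\mathrm{char}(k)\nmid|G|$ enters only to make the factor $1/|G|$ legal.

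Three short checks then finish the proof. Because $W$ is $G$-invariant, each summand $\rho(g)\pi\rho(g)^{-1}$ lands in $W$, so $\tilde\pi(V) \subseteq W$. For $w \in W$, the element $\rho(g)^{-1}w$ also lies in $W$ where $\pi$ acts as the identity, so each summand sends $w$ back to itself and $\tilde\pi|_W = \mathrm{id}_W$. Hence $\tilde\pi$ is a projection onto $W$, yielding $V = W \oplus \ker(\tilde\pi)$. A one-line reindexing $g \mapsto hg$ shows $\rho(h)\tilde\pi\rho(h)^{-1} = \tilde\pi$, i.e.\ $\rho(h)\tilde\pi = \tilde\pi\rho(h)$, so $\ker(\tilde\pi)$ is $G$-stable and can be taken as the desired $W'$.

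For the converse I would exhibit the counterexample built from the left regular representation on $V = k[G]$, now assuming $\mathrm{char}(k)\mid|G|$. Set $e = \sum_{g\in G} g$; since $L(h)e = \sum_g hg = e$ for every $h$, the line $W = ke$ is a one-dimensional $G$-invariant subspace. If there were a $G$-invariant complement $W'$, I would write $1_G = \alpha e + w'$ with $\alpha \in k$ and $w' \in W'$, apply $L(h)$, and sum over $h$; this produces $e = \alpha|G|e + \sum_h L(h)w'$, and since $|G| = 0$ in $k$ the first term vanishes, forcing $e = \sum_h L(h)w' \in W'$ and contradicting $W \cap W' = 0$. The main obstacle is really just spotting the averaging formula for $\tilde\pi$; once it is written down the verifications are mechanical, and each of the three hypotheses ($G$-invariance of $W$, invertibility of $|G|$, and the homomorphism property of $\rho$) enters at exactly one step.
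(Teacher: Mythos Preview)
Your forward direction is exactly the paper's argument: choose an arbitrary linear complement, average the associated projection over $G$, and take the kernel of the averaged projection as $W'$. The only cosmetic difference is that you write the summand as $\rho(g)\pi\rho(g)^{-1}$ while the paper writes $\rho(t)^{-1}\pi\rho(t)$; since summing over $G$ is the same as summing over $G^{-1}$, the two formulas coincide.

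For the converse you and the paper both work inside the regular representation $k[G]$, but you pick different subspaces. The paper takes the augmentation ideal $W=\{\sum_g \alpha_g g : \sum_g \alpha_g = 0\}$, a codimension-one subspace, and simply asserts (without carrying out the verification) that it admits no $G$-invariant complement. You instead take the one-dimensional fixed line $W=k e$ with $e=\sum_g g$, and your summing-over-$h$ argument is a clean, complete proof that it has no invariant complement. The two choices are in a sense dual: a $G$-complement to the augmentation ideal would have to be a $G$-fixed line, and the only candidate $ke$ lies \emph{inside} the augmentation ideal once $|G|=0$ in $k$; conversely your argument shows directly that $ke$ cannot be split off. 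Your version has the advantage of being fully written out, while the paper's choice makes the role of the condition $\sum \alpha_g = 0$ more visibly tied to $|G|=0$.
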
}
\noindent We can re-state the above theorem as follows: Let $k$ be a field and $G$ be a finite group. 
Then, every finite-dimensional representation of $G$ over $k$ has the property that whenever it has 
a $G$-invariant subspace it has a $G$-invariant complement if and only if $char(k)\nmid |G|$. This 
gives us:

\begin{proposition}[Complete Reducibility]
Let $k$ be a field and $G$ a finite group with $char(k)\nmid |G|$. Then, every finite-dimensional 
representation of $G$ is completely reducible.
\end{proposition}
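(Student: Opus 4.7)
The plan is to deduce this proposition from Maschke's theorem by induction on $\dim V$, so essentially no new ideas are required beyond the theorem just stated.

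First I would set up the induction. For the base case, if $\dim V = 0$ we take the empty direct sum (or start at $\dim V = 1$, where every nonzero representation is automatically irreducible since the only subspaces are $0$ and $V$). For the inductive step, assume the proposition holds for all representations of dimension strictly less than $n$, and let $(\rho,V)$ have dimension $n$.

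Next, the case split. If $(\rho,V)$ is irreducible there is nothing to prove, as $V$ is itself a direct sum of one irreducible piece. Otherwise, by definition of reducibility there exists a $G$-invariant subspace $W$ with $0 \subsetneq W \subsetneq V$. Here is where I invoke Maschke's theorem: since $\operatorname{char}(k) \nmid |G|$, there exists a $G$-invariant complement $W'$ with $V = W \oplus W'$. Both $W$ and $W'$ are proper $G$-invariant subspaces, so restricting $\rho$ gives representations $(\rho_W, W)$ and $(\rho_{W'}, W')$, each of dimension strictly less than $n$.

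I would then apply the induction hypothesis to each of $W$ and $W'$ to write $W = U_1 \oplus \cdots \oplus U_r$ and $W' = U_{r+1} \oplus \cdots \oplus U_s$ with each $U_i$ an irreducible $G$-invariant subspace, and conclude
\[
V = U_1 \oplus U_2 \oplus \cdots \oplus U_s,
\]
which exhibits $V$ as a direct sum of irreducibles and completes the induction. The only real obstacle, producing the invariant complement at the splitting step, is precisely what Maschke's theorem supplies; the rest is bookkeeping. One should also note that the hypothesis $\operatorname{char}(k)\nmid |G|$ is inherited automatically by the subrepresentations, so the inductive hypothesis applies without any further check.
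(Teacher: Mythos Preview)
Your proposal is correct and follows essentially the same approach as the paper: induction on $\dim V$, the base case being the trivially irreducible one-dimensional representation, and the inductive step splitting a reducible $V$ as $W\oplus W'$ via Maschke's theorem and applying the induction hypothesis to each summand. Your write-up is in fact slightly more explicit about assembling the final decomposition, but the argument is the same.
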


Now we are going to prove the above results. We need to recall the notion of projection from 
`Linear Algebra'. Let $V$ be a finite-dimensional vector space over the field $k$.
{\color{teal}
\begin{definition}
An endomorphisms $\pi\colon V\rightarrow V$ is called a {\bf projection} if $\pi^2=\pi$.
\end{definition}}
\noindent Some examples of projection are given as exercises below.

Let $W\subset V$ be a subspace. A subspace $W'$ is called a {\bf complement} of $W$ if $V = W\oplus 
W'$. Sometimes, we also say that $W$ has a {\bf direct summand}. It is a simple exercise in `Linear 
Algebra' to show that such a complement always exists (see exercise below), and there could be many 
of them.

\begin{lemma}
Let $\pi$ be an endomorphism. Then, $\pi$ is a projection if and only if there exists a 
decomposition $V = W \oplus W'$ such that $\pi(W)=0$, $\pi(W')=W'$, and $\pi$ restricted to $W'$ 
is identity.
\end{lemma}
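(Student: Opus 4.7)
The plan is to prove both implications directly, using the candidates $W=\ker(\pi)$ and $W'=\mathrm{Im}(\pi)$ for the forward direction and unique decomposition of vectors for the converse.

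First I would handle the forward direction. Assume $\pi^2=\pi$. I would set $W=\ker(\pi)$ and $W'=\mathrm{Im}(\pi)$, so that $\pi(W)=0$ holds by definition. To show $V=W\oplus W'$, I would use the identity $v=(v-\pi(v))+\pi(v)$ for every $v\in V$; the first summand lies in $W$ because $\pi(v-\pi(v))=\pi(v)-\pi^2(v)=0$, and the second summand lies in $W'$ by definition. For the intersection $W\cap W'=0$, if $v=\pi(u)$ and $\pi(v)=0$, then $0=\pi(v)=\pi^2(u)=\pi(u)=v$. Next I would check that $\pi$ is the identity on $W'$: if $w=\pi(u)\in W'$, then $\pi(w)=\pi^2(u)=\pi(u)=w$. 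This also gives $\pi(W')=W'$ as an immediate consequence.

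For the converse, suppose we are given a decomposition $V=W\oplus W'$ with $\pi(W)=0$ and $\pi|_{W'}=\mathrm{id}$. Every $v\in V$ has a unique expression $v=w+w'$ with $w\in W$ and $w'\in W'$. Then $\pi(v)=\pi(w)+\pi(w')=0+w'=w'$, and applying $\pi$ once more gives $\pi^2(v)=\pi(w')=w'=\pi(v)$, so $\pi^2=\pi$.

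There is no real obstacle here; the only point that requires mild care is verifying $W\cap W'=0$ in the forward direction, since this is where the hypothesis $\pi^2=\pi$ is genuinely used (as opposed to the trivial identity $\pi^2(u)=\pi(\pi(u))$). Everything else is a bookkeeping computation using the canonical decomposition $v=(v-\pi(v))+\pi(v)$.
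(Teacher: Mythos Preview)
Your proof is correct and follows essentially the same approach as the paper: for the direction $\pi^2=\pi \Rightarrow$ decomposition, both take $W=\ker(\pi)$, $W'=\mathrm{Im}(\pi)$ and use the splitting $v=(v-\pi(v))+\pi(v)$, and the converse is the same direct computation (the paper handles it in one line). The only superficial differences are that the paper treats the converse first and checks the intersection $W\cap W'=0$ before the sum, while you reverse both orderings.
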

\begin{proof}
Let $\pi\colon V\rightarrow V$ be such that $\pi(w, w') = w'$. Then, clearly $\pi^2=\pi$.

Now suppose $\pi$ is a projection. We claim that $V=ker(\pi)\oplus Im(\pi)$. Let $x\in ker(\pi)\cap Im(\pi)$. Then
there exists $y\in V$ such that $\pi(y)=x$ and $x=\pi(y)=\pi^2(y)=\pi(\pi(x))=\pi(0)=0$. Hence $ker(\pi)\cap Im(\pi)=0$.
Now, let $v\in V$. Then $v=(v-\pi(v))+\pi(v)$, and we see that $\pi(v)\in Im(\pi)$ and $v-\pi(v)\in 
ker(\pi)$ since 
$\pi(v-\pi(v))=\pi(v)-\pi^2(v)=0$. Now, let $x\in Im(\pi)$, say $x=\pi(y)$. Then 
$\pi(x)=\pi(\pi(y))=\pi(y)=x$. This shows
that $\pi$ restricted to $Im(\pi)$ is the identity map. 
\end{proof}

\begin{remark}
The reader familiar with `Jordan/Rational Canonical Form Theory' would recognise the following. The 
minimal polynomial of $\pi$ is $X(X-1)$ which is a product of distinct linear factors. Hence $\pi$ 
is a diagonalizable linear transformation with eigenvalues $0$ and $1$. Hence, there exists a basis 
so that the matrix of $\pi$ is $\diag\{0, \ldots, 0, 1, \ldots, 1\}$. This will give another proof 
of the above Lemma. 
\end{remark}
\begin{proof}[{\bf Proof of the Maschke's Theorem}]
Let $\rho \colon G\rightarrow \GL(V)$ be a representation. 
Let $W$ be a $G$-invariant subspace of $V$. Let $W_0$ be a (vector space) complement, i.e., 
$V=W_0\oplus W$. We have to produce a 
complement which is $G$-invariant. Let $\pi$ be a projection corresponding to this decomposition, 
i.e., $\pi(W_0)=0$, and $\pi(w)=w$ for all $w\in W$. We define an endomorphism $\pi'\colon 
V\rightarrow V$ by `averaging technique' as follows:
$$\pi'=\frac{1}{|G|}\sum_{t\in G}\rho(t)^{-1}\pi\rho(t).$$
We claim that $\pi'$ is a projection. We note that $\pi'(V)\subset W$ since $\pi\rho(t)(V)\subset W$ and $W$ is $G$-invariant.
In fact, $\pi'(w)=w$ for all $w\in W$ since $\pi'(w)= \frac{1}{|G|}\sum_{t\in G}\rho(t)^{-1}\pi\rho(t)(w)=
\frac{1}{|G|}\sum_{t\in G}\rho(t)^{-1}\pi(\rho(t)(w))=\frac{1}{|G|}\sum_{t\in G}\rho(t)^{-1}(\rho(t)(w))=w$ (note that 
$\rho(t)(w)\in W$ and $\pi$ takes it to itself). Let $v\in V$. Then $\pi'(v)\in W$. Hence $\pi'^2(v)=\pi'(\pi'(v))=\pi'(v)$ as 
we have $\pi'(v)\in W$ and $\pi'$ takes any element of $W$ to itself. Hence $\pi'^2=\pi'$.

Now we write decomposition of $V$ with respect to $\pi'$, say $V=W'\oplus W$ where $W'=ker(\pi')$ and $Im(\pi')=W$. 
We claim that $W'$ is $G$-invariant which will prove the theorem.
For this we observe that $\pi'$ is a $G$-invariant homomorphism, i.e., 
$\pi'(\rho(g)(v))=\rho(g)(\pi'(v))$ for all $g\in G$ and $v\in V$.
\begin{eqnarray*}
\pi'(\rho(g)(v)) &=& \frac{1}{|G|}\sum_{t\in G}\rho(t)^{-1}\pi\rho(t)(\rho(g)(v))\\
&=&\frac{1}{|G|}\sum_{t\in G}\rho(g)\rho(g)^{-1}\rho(t)^{-1}\pi\rho(t)\rho(g)(v)\\
&=& \rho(g)\frac{1}{|G|}\sum_{t\in G}\rho(tg)^{-1}\pi\rho(tg)(v)\\
&=&\rho(g)(\pi'(v)).
\end{eqnarray*}
This helps us to verify that $W'$ is $G$-invariant. Let $w'\in W'$. To show that $\rho(g)(w')\in W'$. 
For this we note that $\pi'(\rho(g)(w'))=\rho(g)(\pi'(w))=\rho(g)(0)=0$. This way we have produced 
$G$-invariant complement of $W$.

For the converse let $char(k)\mid |G|$. We take the regular representation $V=k[G]$. Consider 
$W=\left\{\sum_{g\in G}\alpha_gg\mid \sum\alpha_g=0\right\}$. We claim that $W$ is $G$-invariant but 
it has no $G$-invariant complement. This requires a bit of effort and we leave it to an interested 
reader to either work out herself or look up some textbook.
\end{proof}
\begin{remark}
In the proof of Maschke's theorem, one can start with a symmetric bilinear form and apply the trick 
of averaging to it. In that case, the complement will be the orthogonal subspace. 
Conceptually, I 
like that proof better however it requires familiarity with the bilinear form to be able to appreciate 
that proof. Later we will do that in some other context. An enthusiastic reader can work out the 
exercise below. 
\end{remark}

\begin{proof}[{\bf Proof of the Proposition (Complete Reducibility)}]
Let $\rho\colon G\rightarrow \GL(V)$ be a representation. 
We use induction on the dimension of $V$ to prove this result. Let $\dim(V)=1$. It is easy to verify that one-dimensional 
representation is always irreducible. Let $V$ be of dimension $n\geq 2$. If $V$ is irreducible we have nothing to prove.
So we may assume $V$ has a $G$-invariant proper subspace, say $W$ with $1\leq \dim(W)\leq n-1$. 
By Maschke's Theorem, we can write $V=W\oplus W'$ where $W'$ is also $G$-invariant. But now $\dim(W)$ and $\dim(W')$ both are 
less than $n$. By the induction hypothesis, they can be written as a direct sum of irreducible representations. This proves
the proposition.
\end{proof}

\begin{exercise}
Let $V$ be a finite-dimensional vector space.
Let $W\subset V$ be a subspace. Show that there exists a subspace $W'$ such that $V=W\oplus W'$. 
\end{exercise}
Hint: Start with a basis of $W$ and extend it to a basis of $V$.
\begin{exercise}
Show that when $V=W\oplus W'$ the map $\pi(w+w')=w$ is a projection map. Verify that $ker(\pi)=W'$ and $Im(\pi)=W$.
The map $\pi$ is called a projection on $W$. Notice that this map depends on the chosen complement $W'$.
\end{exercise}
\begin{exercise}
\begin{enumerate}
\item Complement of a subspace is not unique. Let us consider $V=\mathbb R^2$. Take a line $L$ passing through the origin. It
is a one-dimensional subspace. Prove that any other line is a complement.
\item Let $W$ be the one-dimensional subspace x-axis. Choose the complement space as y-axis and write down the projection map.
What if we chose the complement as the line $x=y$?
\end{enumerate}
\end{exercise}

The exercises below show that Maschke's theorem may not be true if we don't have a finite group.
\begin{exercise}
Let $V=\{(\ldots, a_{-1}, a_0, a_1, a_2,\ldots) \mid a_i\in \mathbb R\}$ be the space (a vector space of infinite dimension) and $G=\mathbb Z$. 
Define $\rho(1)$ to be the shift operator and $\rho(n)$ is obtained by composing $\rho(1)$ $n$-times. Show that this is a representation of $\mathbb Z$. It has an invariant subspace $<(\ldots,1,1,1,\ldots)>$. Is it completely reducible?
\end{exercise}

\begin{exercise} Consider a two dimensional representation of $\mathbb R$ as follows:
 
$$ a \mapsto  \left (\begin{array}{cc}
1 & a \\ 0 & 1
\end{array}  \right) .
$$
It leaves a one-dimensional subspace fixed generated by $(1,0)$ but it has no invariant complementary 
subspace. Hence this representation is not completely reducible.
\end{exercise}
\begin{exercise} Let $k=\mathbb Z/p\mathbb Z$. Consider a two-dimensional representation of the cyclic group $G=\mathbb Z/p \mathbb Z$
of order $p$ over $k$ of characteristic $p$ defined as in the previous example. 
Find a subspace to show that Maschke's theorem does not hold.
\end{exercise}

\begin{exercise}
Let $V=\mathbb C^n$ be the $n$-dimensional complex vector space. Let $G$ be a finite group. 
\begin{enumerate}
 \item Show that $\langle , \rangle \colon V \times V \rightarrow \mathbb C$ given by $\langle 
(z_1, \ldots, z_n), (w_1, \ldots, w_n) \rangle = \sum_{i=1}^n z_i \bar w_i$ is a non-degenerate 
inner product. (Non-degenerate amounts to showing $V^{\perp} =0$ or equivalently the matrix associated with the form is invertible). 
\item Let $\rho \colon G \rightarrow \GL(\mathbb C^n)$ be a representation. Show that $\mathcal H 
\colon V \times V \rightarrow \mathbb C$ defined by 
$$\mathcal H(z,w) = \frac{1}{|G|} \sum_{g\in G} \langle \rho(g)z, \rho(g)w\rangle$$
is an inner product.
\item Show that $\mathcal H$ is $G$-invariant, i.e., for each $x\in G$ we have $\mathcal H(z,w) = 
\mathcal H(\rho(x)z, \rho(x)w)$.
\item Show that if $W$ is a $G$-invariant subspace of $V$ then $W^{\perp}$ is also $G$-invariant, 
and $V=W\osum W^{\perp}$. This gives another proof of Maschke's theorem.
\end{enumerate}

\end{exercise}

\chapter{Schur's Lemma}

The broad question we would like to deal with is that can we classify all representations, up to 
some kind of equivalence (defined below). Further, in the wake of Maschke's Theorem, considering irreducible representations will be enough. 
{\color{teal}
\begin{definition}[$G$-map]
Let $(\rho,V)$ and $(\rho',V')$ be two representations of $G$ over field $k$. A linear map $T\colon V\rightarrow V'$ is called a {\bf $G$-map (between two representations)} if it satisfies the following:
$$\rho'(t)T=T\rho(t) \ \forall t\in G.$$
The following diagram helps in understanding this condition: 
\[\xymatrix{V \ar[d]_{\rho(t)} \ar[r]^T & V' \ar[d]^{\rho'(t)} \\
V \ar[r]^T & V'
}\]
\end{definition}}
\noindent The $G$-maps are also called {\bf intertwiners}.
\begin{exercise}
Prove that two representations of $G$ are equivalent if and only if there exists an invertible $G$-map.
\end{exercise}
\noindent In the case representations are irreducible the $G$-maps are easy to decide as follows: 
\begin{proposition}[Schur's Lemma]\label{schurlemma}
Let $(\rho,V)$ and $(\rho',V')$ be two irreducible representations of $G$ (of dimension $\geq 1$). 
Let $T\colon V\rightarrow V'$ be a $G$-map. Then, either $T=0$ or $T$ is an isomorphism. Moreover, 
if $T$ is non-zero then $T$ is an isomorphism if and only if the two representations are equivalent.
\end{proposition}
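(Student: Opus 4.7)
The plan is to exploit the two natural subspaces attached to a linear map, namely the kernel and the image, and show that both are $G$-invariant. Once this is established, irreducibility of $V$ and $V'$ forces each to be either zero or the whole space, which gives a very short case analysis.

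First I would verify that $\ker(T)$ is a $G$-invariant subspace of $V$. This is an immediate consequence of the intertwining relation: if $v \in \ker(T)$ and $t \in G$, then
\[
T(\rho(t)(v)) = \rho'(t)(T(v)) = \rho'(t)(0) = 0,
\]
so $\rho(t)(v) \in \ker(T)$. A symmetric computation, writing $T(v) \in \mathrm{Im}(T)$ and noting $\rho'(t)(T(v)) = T(\rho(t)(v)) \in \mathrm{Im}(T)$, shows that $\mathrm{Im}(T)$ is a $G$-invariant subspace of $V'$.

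Next, I would invoke irreducibility. Since $(\rho,V)$ is irreducible, $\ker(T)$ must be either $0$ or $V$. Since $(\rho',V')$ is irreducible, $\mathrm{Im}(T)$ must be either $0$ or $V'$. If $T = 0$ there is nothing to prove, so assume $T \neq 0$. Then $\ker(T) \neq V$, forcing $\ker(T) = 0$, i.e.\ $T$ is injective. Also $\mathrm{Im}(T) \neq 0$, forcing $\mathrm{Im}(T) = V'$, i.e.\ $T$ is surjective. Hence $T$ is a bijective linear map, i.e.\ an isomorphism of vector spaces, and since it is also a $G$-map it is an isomorphism of representations.

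Finally, for the ``moreover'' clause, I would simply combine what has just been shown with the earlier exercise characterizing equivalence as the existence of an invertible $G$-map. If $T \neq 0$ is an isomorphism, then $T$ itself is an invertible intertwiner and the representations are equivalent. Conversely, if the representations are equivalent, then any nonzero $G$-map $T$ is automatically an isomorphism by the first part of the proposition. There is no real obstacle here: the whole argument is formal once one observes that kernel and image are $G$-invariant, and the only mild care needed is to keep track of which irreducibility hypothesis is being used for which subspace.
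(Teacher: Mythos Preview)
Your proof is correct and follows essentially the same route as the paper: show that $\ker(T)$ and $\mathrm{Im}(T)$ are $G$-invariant, then use irreducibility of $V$ and $V'$ to force the dichotomy. If anything, your direct appeal to the definition of irreducibility is cleaner than the paper's phrasing, which somewhat misleadingly invokes Maschke's theorem at those steps.
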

\begin{proof}
Let us consider the subspace $ker(T)$. 
We claim that it is a $G$-invariant subspace of $V$. 
For this let us take $v\in ker(T)$. 
Then $T\rho(t)(v)=\rho'(t)T(v)=0$ implies $\rho(t)(v)\in ker(T)$ for all $t\in G$. 
Since $V$ is irreducible we get, either $ker(T)=0$ or $ker(T)=V$. In the case $ker(T)=V$ the map $T=0$.

Hence we may assume $ker(T)=0$, i.e., $T$ is injective. 
Now we consider the subspace $Im(T)\subset V'$. 
We claim that it is also $G$-invariant. 
For this let $y=T(x)\in Im(T)$. 
Then $\rho'(t)(y)=\rho'(t)T(x)=T\rho(t)(x)\in Im(T)$ for any $t\in G$. 
Hence $Im(T)$ is $G$-invariant. 
Since $V'$ is irreducible we get either, $Im(T)=0$ or $Im(T)=V'$. Since $T$ is injective $Im(T)\neq 0$ and hence $Im(T)=V'$. This proves that in this case $T$ is an isomorphism.
\end{proof}
\begin{exercise}
Let $V$ be a vector space over $\mathbb C$, and $T\in\End(V)$ be a linear transformation. Show that 
there exists a one-dimensional subspace of $V$ left invariant by $T$. Show by example that this need 
not be true if the field is $\mathbb R$ instead of $\mathbb C$.
\end{exercise}
Hint: Show that $T$ has an eigenvalue then the corresponding eigenvector will do the job.
\begin{corollary}\label{corschur}
Let $(\rho,V)$ be an irreducible representation of $G$ over $\mathbb C$. Let $T\colon V\rightarrow 
V$ be a $G$-map. Then, $T=\lambda.Id$ for some $\lambda\in \mathbb C$ and $Id$ is the identity map 
on $V$. 
\end{corollary}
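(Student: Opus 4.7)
The plan is to leverage Schur's Lemma (Proposition \ref{schurlemma}) together with the fact that $\mathbb{C}$ is algebraically closed. The key observation is that while Schur's Lemma tells us a nonzero $G$-map between irreducibles is an isomorphism, over $\mathbb{C}$ we get a stronger rigidity statement from the endomorphism $T \colon V \to V$: if I can produce one element of the kernel, the whole map must vanish.

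First I would use the hint from the exercise preceding the corollary: since $V$ is a finite dimensional vector space over $\mathbb{C}$, the linear operator $T$ has a characteristic polynomial which factors completely over $\mathbb{C}$, so $T$ admits at least one eigenvalue $\lambda \in \mathbb{C}$. Pick a corresponding eigenvector $v \neq 0$, so that $T(v) = \lambda v$, i.e., $(T - \lambda \cdot \text{Id})(v) = 0$.

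Next I would consider the map $S = T - \lambda \cdot \text{Id} \colon V \to V$. I claim $S$ is again a $G$-map: the identity $\text{Id}$ clearly commutes with every $\rho(g)$, and $T$ is a $G$-map by hypothesis, so a routine check gives $\rho(g) S = \rho(g) T - \lambda \rho(g) = T\rho(g) - \lambda \rho(g) = S \rho(g)$ for all $g \in G$. Thus $S$ is a $G$-endomorphism of the irreducible representation $V$.

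Finally I would apply Schur's Lemma (Proposition \ref{schurlemma}) to $S$, viewed as a $G$-map from $V$ to itself between two copies of the same irreducible representation. The lemma says $S$ is either zero or an isomorphism. But $S(v) = 0$ with $v \neq 0$ shows $\ker(S) \neq 0$, so $S$ cannot be injective, and hence cannot be an isomorphism. Therefore $S = 0$, which gives $T = \lambda \cdot \text{Id}$, as desired. The only nontrivial point in the argument is the invocation of algebraic closure of $\mathbb{C}$ to guarantee an eigenvalue; this is the reason the corollary would fail over $\mathbb{R}$, exactly as signalled by the example requested in the preceding exercise.
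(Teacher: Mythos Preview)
Your proof is correct and follows essentially the same route as the paper: both arguments pick an eigenvalue $\lambda$ of $T$, observe that $T-\lambda\cdot Id$ is a $G$-map with nonzero kernel, and conclude it must vanish. The only cosmetic difference is that the paper phrases the last step as ``$\ker(T-\lambda\cdot Id)$ is a nonzero $G$-invariant subspace, hence all of $V$,'' whereas you invoke Schur's Lemma directly on $S=T-\lambda\cdot Id$; these are the same argument.
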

\begin{proof}
Let $\lambda$ be an eigen-value of $T$ corresponding to the eigen-vector $v\in V$, i.e., $T(v)=\lambda v$. Consider the subspace $W=ker(T-\lambda.Id)$. We claim that $W$ is a $G$-invariant subspace. Since $T$ and scalar multiplications are $G$-maps so is $T-\lambda$. Hence the kernel is $G$-invariant (as we verified in the proof of Schur's Lemma). One can do this directly also see the exercise below.

Since $W\neq 0$ and is $G$-invariant subspace of irreducible representation $V$, we get $W=V$. This gives $T=\lambda. Id$.
\end{proof}
Thus, if $(\rho, V)$ is an irreducible representation. Then, the set of all $G$-linear maps 
$\End_G(V)$ contains scalars $k\subset \End_G(V) \subset \End(V)$, and is a (finite-dimensional) 
division ring. In the case $k=\mathbb C$, $\End_G(V)$ is $\mathbb C$ the set of scalars only. 
\begin{exercise}
Let $T$ and $S$ be two $G$-maps. Show that $ker(T+S)$ is a $G$-invariant subspace.
\end{exercise}

\chapter{Representation Theory of Finite Abelian Groups over $\mathbb C$}\label{rep-abelian}
Throughout this chapter, $G$ denotes a finite Abelian group.
\begin{proposition}\label{abelian1dimension} 
Let $k=\mathbb C$ and $G$ be a finite Abelian group.
Let $(\rho,V)$ be an irreducible representation of $G$. Then, $\dim(V)=1$. 
\end{proposition}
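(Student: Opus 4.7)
The plan is to exploit commutativity of $G$ together with Corollary \ref{corschur} (the complex form of Schur's Lemma). The key observation is that when $G$ is abelian, every element $\rho(g)$ of the representing group is itself a $G$-map from $V$ to $V$, so each $\rho(g)$ must be a scalar. Once every group element acts as a scalar, every subspace of $V$ is automatically $G$-invariant, and irreducibility forces $\dim(V) = 1$.

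First, I would fix an arbitrary $g \in G$ and verify that $\rho(g) \colon V \to V$ is a $G$-map. This amounts to checking $\rho(h) \rho(g) = \rho(g) \rho(h)$ for all $h \in G$, which follows immediately from $\rho$ being a homomorphism and $G$ being abelian, since $hg = gh$ gives $\rho(h)\rho(g) = \rho(hg) = \rho(gh) = \rho(g)\rho(h)$.

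Next, since $V$ is irreducible over $\mathbb{C}$ and $\rho(g)$ is a $G$-map from $V$ to itself, Corollary \ref{corschur} applies to give $\rho(g) = \lambda_g \cdot \text{Id}$ for some scalar $\lambda_g \in \mathbb{C}$. Doing this for every $g \in G$ shows that every $\rho(g)$ acts on $V$ by scalar multiplication.

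Finally, I would pick any nonzero $v \in V$ and consider the one-dimensional subspace $W = \mathrm{span}\{v\}$. For each $g \in G$ we have $\rho(g)(v) = \lambda_g v \in W$, so $W$ is $G$-invariant. Since $V$ is irreducible and $W \neq 0$, we must have $W = V$, i.e. $\dim(V) = 1$. There is no real obstacle here: the whole argument is a direct application of Corollary \ref{corschur}, and the only subtle point worth emphasizing is that scalar-ness of each $\rho(g)$ relies on working over $\mathbb{C}$ (so that eigenvalues exist), which is exactly the hypothesis we have.
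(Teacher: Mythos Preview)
Your proof is correct and follows essentially the same approach as the paper: show each $\rho(g)$ is a $G$-map via commutativity, apply Corollary~\ref{corschur} to get $\rho(g)=\lambda_g\cdot\mathrm{Id}$, then observe that any one-dimensional subspace is $G$-invariant and invoke irreducibility. The only cosmetic difference is that the paper's hint phrases the last step as ``use Maschke's theorem,'' but as you note, the definition of irreducibility alone suffices.
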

\begin{proof}
The proof is a simple application of Schur's Lemma. We will break it down in the step-by-step exercises below.
\end{proof}
\begin{exercise}
With notation as in the proposition,
\begin{enumerate}
\item for $g\in G$ consider $\rho(g)\colon V \rightarrow V$. 
Prove that $\rho(g)$ is a $G$-map. (Hint: $ \rho(g)(\rho(h)(v))=\rho(gh)(v)=\rho(hg)(v)=\rho(h)(\rho(g)(v)). $)
\item Prove that there exists $\lambda$ (depending on $g$) in $\mathbb C$ such that  $\rho(g)=\lambda.Id$. (Hint: Use the corollary of Schur's Lemma.)
\item Prove that the map $\rho\colon G\rightarrow \GL(V)$ maps every element $g$ to a scalar map, 
i.e., it is given by $\rho(g)=\lambda_g.Id$ where $\lambda_g\in\mathbb C$.
\item Prove that the dimension of $V$ is $1$. (Hint: Take any one-dimensional subspace of $V$. It is $G$-invariant. Use Maschke's theorem on it as $V$ is irreducible.) 
\end{enumerate}
\end{exercise}
\begin{proposition}
Let $k=\mathbb C$ and $G$ be a finite Abelian group. Let $\rho\colon G\rightarrow \GL(V)$ be a 
representation of dimension $n$. Prove that we can choose a basis of $V$ such that $\rho(G)$ is 
contained in diagonal matrices.
\end{proposition}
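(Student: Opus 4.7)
The plan is to combine complete reducibility (from Maschke's theorem) with the fact, just established in Proposition~\ref{abelian1dimension}, that every irreducible complex representation of a finite Abelian group is one-dimensional. Once $V$ is broken into a direct sum of one-dimensional $G$-invariant subspaces, any basis obtained by choosing one nonzero vector from each summand will put $\rho(G)$ simultaneously into diagonal form.

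More concretely, first I would apply the Complete Reducibility Proposition: since $G$ is finite and $\operatorname{char}(\mathbb{C})=0$, the representation $V$ decomposes as $V = W_1 \oplus W_2 \oplus \cdots \oplus W_r$, where each $W_i$ is a $G$-invariant irreducible subrepresentation. Next I invoke Proposition~\ref{abelian1dimension} to conclude that $\dim(W_i)=1$ for every $i$, so in fact $r=n$ and $V = W_1 \oplus \cdots \oplus W_n$ with each $W_i$ one-dimensional.

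Then I would pick a nonzero vector $v_i \in W_i$ for each $i$. Since the $W_i$'s form a direct sum decomposition of $V$, the set $\{v_1,\ldots,v_n\}$ is a basis of $V$. Because each $W_i$ is $G$-invariant and one-dimensional, for every $g \in G$ there is a scalar $\lambda_i(g) \in \mathbb{C}^*$ with $\rho(g)(v_i) = \lambda_i(g)\, v_i$. Writing $\rho(g)$ as a matrix in the basis $\{v_1,\ldots,v_n\}$ therefore yields
\[
\rho(g) = \diag\{\lambda_1(g),\lambda_2(g),\ldots,\lambda_n(g)\},
\]
which is exactly the desired statement.

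There is really no hard step here; the whole proof is an assembly of the two main structural theorems already at our disposal. The only point requiring a tiny bit of care is the observation that a one-dimensional invariant subspace is automatically an eigenspace for every $\rho(g)$ simultaneously — but this is immediate from the definition of $G$-invariance in dimension one. Thus the work has effectively been done in the proofs of Maschke's Theorem and Proposition~\ref{abelian1dimension}, and this proposition is essentially a corollary.
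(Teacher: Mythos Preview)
Your proof is correct and follows essentially the same route as the paper: decompose $V$ into irreducibles via Maschke's theorem (complete reducibility), use Proposition~\ref{abelian1dimension} to see each summand is one-dimensional, and then pick one nonzero vector from each summand to get a simultaneous eigenbasis. You have simply spelled out the final diagonalisation step in more detail than the paper does.
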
 
\begin{proof}
Since $V$ is a representation of a finite group we can use Maschke's theorem to write it as a direct sum of $G$-invariant irreducible ones, say $V=W_1\oplus\ldots\oplus W_r$.  Now using Schur's lemma we conclude that $\dim(W_i)=1$ for all $i$ and hence in turn we get $r=n$. By choosing a vector in each $W_i$ we get the required result. 
\end{proof}
\begin{corollary}\label{diagonal}
Let $G$ be a finite group (possibly non-commutative). Let $\rho\colon G\rightarrow \GL(V)$ be a 
representation. 
Let $g\in G$. Then there exists a basis of $V$ such that the matrix of $\rho(g)$ is diagonal.
\end{corollary}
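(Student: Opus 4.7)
The plan is to reduce the corollary to the preceding proposition by restricting the representation to a suitable abelian subgroup. Given the fixed element $g \in G$, consider the cyclic subgroup $H = \langle g \rangle \subseteq G$. Since $G$ is finite, $H$ is a finite cyclic group, and in particular a finite abelian group.

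Next, I would form the restriction $\rho|_H \colon H \to GL(V)$, which is again a finite-dimensional representation over $\mathbb{C}$, now of the abelian group $H$. The previous proposition (on finite abelian groups over $\mathbb{C}$) then yields a basis of $V$ with respect to which every $\rho(h)$ for $h \in H$ is a diagonal matrix. Taking $h = g$ gives the desired conclusion: in this basis $\rho(g)$ is diagonal.

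There is essentially no obstacle here, since the argument is a one-line application of the previous proposition. The only points to verify are that $H$ is indeed finite and abelian (it is, being cyclic and generated by an element of a finite group) and that the hypotheses of the previous proposition on $V$ and on the ground field are preserved under restriction — which they clearly are, as $V$ stays finite-dimensional and $k = \mathbb{C}$ is unchanged. So the corollary follows immediately, with the content being the observation that diagonalizability of a single operator $\rho(g)$ is already covered by the abelian case applied to $\langle g \rangle$.
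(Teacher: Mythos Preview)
Your proposal is correct and follows exactly the paper's own argument: restrict $\rho$ to the cyclic (hence abelian) subgroup $H=\langle g\rangle$ and apply the preceding proposition to simultaneously diagonalise $\rho(H)$, in particular $\rho(g)$. There is nothing to add.
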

\begin{proof}
Consider $H=<g>\subset G$ and $\rho\colon H\rightarrow \GL(V)$ the restriction map. Since $H$ is 
Abelian, using the above proposition,
we can simultaneously diagonalise elements of $H$. This proves the required result.
\end{proof}
\begin{remark}
In `Linear Algebra' we prove the following result: A commuting set of diagonalizable matrices over $\mathbb C$ can be simultaneously diagonalised. The proposition above is a version of the same result.
We also give a warning about the corollary above that if we have a finite subgroup $G$ of 
$\GL_n(\mathbb C)$ 
then we can take a conjugate of $G$ in such a way that a particular element becomes diagonal. 
\end{remark}

Now this leaves us the question to determine all irreducible representations of an Abelian group $G$. For this, we need to determine all group homomorphisms $\rho\colon G\rightarrow \mathbb C^*$.  
\begin{exercise}
Let $G$ be a finite group (not necessarily Abelian). Let $\chi\colon G\rightarrow \mathbb C^*$ be a group homomorphism. Prove that $|\chi(g)|=1$ and hence $\chi(g)$ is a root of unity. 
\end{exercise}

Let $\widehat G$ be the set of all group homomorphisms from $G$ to the multiplicative group $\mathbb C^*$. Let us also denote $\widehat {\widehat G}$ for the group homomorphisms from $\widehat G$ to $\mathbb C^*$. 
\begin{exercise}
With the notation as above,
\begin{enumerate}
\item Prove that for $G=G_1\times G_2$ we have $\widehat G \cong \widehat G_1\times \widehat G_2$.
\item Let $G=\mathbb Z/n\mathbb Z$. Prove that $\widehat G=\{\chi_k\mid 0\leq k\leq n-1\}$ is a group generated by $\chi_1$ of order $n$ 
where $\chi_1(r)=e^{\frac{2\pi i r}{n}}$ and $\chi_k=\chi_1^k$. Hence $\widehat{\mathbb Z/n\mathbb Z}\cong \mathbb Z/n\mathbb Z$.
\item Use the structure theorem of finite Abelian groups to prove  that $G\cong \widehat G$.
\item Prove that $G$ is naturally isomorphic to $\widehat{\widehat G}$ given by $g\mapsto e_g$ where $e_g(\chi)=\chi(g)$ for all $\chi\in G$.
\end{enumerate}
\end{exercise} 

\begin{exercise}[Fourier Transform]
For $f \in \mathbb C [\mathbb Z/n\mathbb Z]=\{f\mid f\colon \mathbb Z/n\mathbb Z \rightarrow \mathbb C\}$ we define $\hat{f} \in \mathbb C[\mathbb Z/n\mathbb Z]$ by,
$$ \hat{f} (q)= \frac{1}{n} \sum_{k=0}^{n-1} f(k) e(-kq) = \frac{1}{n} \sum_{k=0}^{n-1} f(k) \chi_q(-k).$$
Show that $ f(k)=\sum_{q=0}^{n-1} \hat{f} (q) e(kq) = \sum_{q=0}^{n-1} \hat{f} (q) \chi_q(k)$ and
$ \frac{1}{n}~\sum_{k=0}^{n-1} |f(k)|^2 = \sum_{q=0}^{n-1} | \hat{f} (q)|^2.$
\end{exercise}

\begin{exercise}
On $\mathbb C [\mathbb Z/n\mathbb Z]$ let us define an inner product by $\langle f, f'\rangle = \frac{1}{n}\sum_{j=0}^{n-1}
f(j)\bar f'(j)$ where bar denotes complex conjugation. Prove that $\{\chi_k \mid 0\leq k\leq n-1\}$ form an orthonormal basis
of $\mathbb C [\mathbb Z/n\mathbb Z]$. Let 
$$f=\sum_{\chi\in \widehat{ \mathbb Z/n\mathbb Z} }c_{\chi}\chi.$$ 
Calculate
the coefficients using the inner product and compare this with the previous exercise.
\end{exercise}

Now we show that the converse of the Proposition~\ref{abelian1dimension} is also true.
{\color{red}
\begin{theorem}\label{Abelian}
Let $G$ be a finite group. Every irreducible representation of $G$ over $\mathbb C$ is $1$ dimensional if and only if $G$ is
an Abelian group.
\end{theorem}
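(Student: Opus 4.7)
The forward implication is precisely Proposition~\ref{abelian1dimension}: if $G$ is Abelian, then every irreducible complex representation is one-dimensional. So the plan is to prove the converse, namely that if every irreducible complex representation of $G$ is one-dimensional, then $G$ must be Abelian.

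The key tool I would use is the (left) regular representation $L\colon G\rightarrow GL(\mathbb C[G])$, because it has the magical property of being injective (this was already pointed out in the regular representation example). The strategy is to show that, under our hypothesis, the image $L(G)$ is contained in an Abelian subgroup of $GL(\mathbb C[G])$, whence $G\cong L(G)$ must itself be Abelian.

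Here is the sequence of steps. First, apply Maschke's Theorem and the Complete Reducibility Proposition to the regular representation to obtain a decomposition $\mathbb C[G]=W_1\oplus\cdots\oplus W_r$ where each $W_i$ is $G$-invariant and irreducible. Second, invoke the standing hypothesis: since every irreducible representation of $G$ over $\mathbb C$ is one-dimensional, we have $\dim W_i=1$ for every $i$. Third, pick a nonzero vector $v_i$ in each $W_i$ to form a basis $\{v_1,\ldots,v_r\}$ of $\mathbb C[G]$; because the summands are $G$-invariant, every $L(g)$ acts as a scalar on each $W_i$, so in this basis all matrices $L(g)$ are simultaneously diagonal. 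Fourth, since diagonal matrices commute, $L(g)L(h)=L(h)L(g)$ for all $g,h\in G$; applying $L^{-1}$ (available because $L$ is injective) gives $gh=hg$ for all $g,h\in G$, so $G$ is Abelian.

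The only subtle point, and really the heart of the argument, is step three: getting a single basis that diagonalises every $L(g)$ at once rather than one element at a time. This is not really an obstacle here, because the decomposition $\mathbb C[G]=\bigoplus W_i$ is $G$-invariant by construction, so a basis adapted to the decomposition automatically works for all of $G$ simultaneously; it is exactly the feature that separates this argument from Corollary~\ref{diagonal}, which only diagonalises one element at a time. Apart from being careful to invoke Maschke and the injectivity of $L$ (both of which require $G$ finite, matching our running assumption), the argument is essentially a packaging of results already proved.
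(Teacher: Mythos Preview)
Your argument is correct and matches the paper's proof essentially step for step: both invoke the faithful regular representation, decompose it via Maschke into one-dimensional invariant summands, pick a basis adapted to that decomposition to make every $L(g)$ diagonal, and conclude $G\cong L(G)$ is Abelian. Your remark about why a single basis works simultaneously for all of $G$ even fills in the ``check why?'' the paper leaves to the reader.
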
}
\begin{proof}
Let all irreducible representations of $G$ over $\mathbb C$ be of dimension $1$. Consider the regular representation 
$\rho \colon G\rightarrow \GL(V)$ where $V=\mathbb C[G])$. We know that if $|G|\geq 2$ this 
representation is reducible 
and is an injective
map (also called faithful representation). Using Maschke's theorem we can write $V$ as a direct sum of irreducible ones and 
they are given to be of dimension $1$. Hence there exists a basis (check why?) $\{v_1,\ldots,v_n\}$ of $V$ such that the subspace
generated by each basis vector are invariant. Hence $\rho(G)$ consists of diagonal matrices with respect to this 
basis which is an Abelian group. Hence $G\cong \rho(G)$ is an Abelian group.
\end{proof}

\section{Example of Representation over $\mathbb Q$}

\subsection{An Irreducible Representation of $\mathbb Z/p\mathbb Z$}
Consider $G=\mathbb Z/p\mathbb Z$ where $p$ is an odd prime. Let $K=\mathbb Q(\zeta)$ where $\zeta$ is a primitive $p$th root of 
unity. Let us consider the left multiplication map $l_{\zeta}\colon K\rightarrow K$ given by $x\mapsto \zeta x$.  
Consider the basis $\{1,\zeta,\zeta^2,\ldots,\zeta^{p-2}\}$ of $K$. Then $l_{\zeta}(1)=\zeta, l_{\zeta}(\zeta^i)=\zeta^{i+1}$ for 
$1\leq i\leq p-1$ and $l_{\zeta}(\zeta^{p-2})=\zeta^{p-1}=-(1+\zeta+\zeta^2+\ldots+\zeta^{p-2})$ and the matrix of $l_\zeta$ is:
\[\left[\begin{matrix}
0&0&0&\cdots&0&-1\\
1&0&0&\cdots&0&-1\\
0&1&0&\cdots&0&-1\\
\vdots&&&\ddots&\vdots&\vdots\\
0&0&0&\cdots&1&-1
\end{matrix}
\right]\]
The map $1\mapsto l_{\zeta}$ defines a representation $\rho \colon G\rightarrow \GL_{p-1}(\mathbb 
Q)$. It is an irreducible 
representation of $G$.

\subsection{An Irreducible Representation of the Dihedral Group $D_{2p}$}

Notice that the Galois group $\gal(K/\mathbb Q)\cong \mathbb Z/(p-1)\mathbb Z$ comes with a natural representation on $K$. 
Let $\sigma \in \gal(K/\mathbb Q)$.
Then $\sigma$ is a $\mathbb Q$-linear map which gives representation $\gal(K/Q)\cong \mathbb 
Z/(p-1)\mathbb Z \rightarrow \GL_{p-1}(\mathbb Q)$.
If we consider slightly different basis of $K$, namely, $\{\zeta,\zeta^2,\ldots,\zeta^{p-1}\}$ then the matrix of each $\sigma$
is a permutation matrix. In fact this way $\gal(K/\mathbb Q) \hookrightarrow S_{p-1}$, the symmetric group. However, this representation
is not irreducible (the element $\zeta+\zeta^2+\cdots+\zeta^{p-1}$ is invariant and gives decomposition).

Notice that the Galois automomrphism $\sigma \colon K\rightarrow K$ given by $\zeta\mapsto \zeta^{-1}$ is an order $2$ element.
We claim that $\sigma$ normalizes $l_{\zeta}$, i.e., $\sigma l_{\zeta} \sigma = l_{\zeta^{-1}}$. 
Since $\sigma l_{\zeta}\sigma (\zeta^i)=\sigma l_{\zeta}(\zeta^{-i})=\sigma(\zeta^{1-i})=\zeta^{i-1}=l_{\zeta^{-1}}(\zeta^i)$.
Let us denote the subgroup generated by $\sigma$ as $H_1$ and the subgroup generated by $l_{\zeta}$ by $H_2$.
Then $H_1H_2$ is a group of order $2p$ where $H_2$ is a normal subgroup of order $p$. Hence $H_1H_2\cong D_{2p}$.
This gives representation of order $p-1$ of $D_{2p}=\langle r,s \mid r_p=1=s^2, srs=r^{-1} \rangle$ 
given by $D_{2p} \rightarrow \GL(K)$ such that
$r\mapsto l_{\zeta}$ and $s\mapsto \sigma$.

\begin{exercise}
Prove that the representation constructed above is irreducible.
\end{exercise}

\begin{exercise}
Write down the above representation concretely for $D_6$ and $D_{10}$.
\end{exercise}


\chapter{The Group Algebra $k[G]$}

Let $R$ be a ring (possibly non-commutative) with $1$. 
{\color{teal}
\begin{definition}
A (left) {\bf module} $M$ over a ring $R$ is an Abelian group $(M,+)$ with a map (called scalar multiplication) $R\times M \rightarrow M$ 
satisfying the following:
\begin{enumerate}
\item $(r_1+r_2)m=r_1m+r_2m$ for all $r_1,r_2\in R$ and $m\in M$.
\item $r(m_1+m_2)=rm_1+rm_2$ for all $r\in R$ and $m_1,m_2\in M$.
\item $r_1(r_2m)=(r_1r_2)m$ for all $r_1,r_2\in R$ and $m\in M$.
\item $1.m=m$ for all $m\in M$.
\end{enumerate}
\end{definition}}
Notice that this definition is the same as the definition of a vector space over a field. Analogous to definitions there we 
can define submodules and module homomorphisms.
\begin{example}
If $R=k$ (a field) or $D$ (a division ring) then the modules are nothing but vector spaces over $R$.
\end{example}
\begin{example}
Let $R$ be a PID (a commutative ring such as $\mathbb Z$ or polynomial ring $k[X]$ etc). Then $R\times R\ldots \times R$ and
$R/I$ for an ideal $I$ are modules over $R$. The structure theory of modules over PID states that any module is a direct 
sum of these kinds. However, over a non-PID things could be more complicated.
\end{example}
\begin{example}
In the non-commutative situation, the simple/semisimple rings are studied.
\end{example}
A module $M$ over a ring $R$ is called {\bf simple} if it has no proper submodules. And  a module $M$ is called 
{\bf semisimple} if every
submodule of $M$ has a direct complement. It is also equivalent to saying that $M$ is a direct sum of simple modules.

A ring $R$ is called {\bf semisimple} if every module over it is semisimple. And a ring $R$ is called {\bf simple} if it has no proper
two-sided ideal.

\begin{exercise}
\begin{enumerate}
\item Is $\mathbb Z$ a semisimple ring or simple ring?
\item When $\mathbb Z/n\mathbb Z$ a semisimple or simple ring?
\item Prove that the ring $M_n(D)$ where $D$ is a division ring is a simple ring and the module $D^n$ thought as one of the
columns of this ring is a module over this ring. 
\end{enumerate}
\end{exercise}

All of the representation theory definitions can be very neatly interpreted in module theory language. 
Given a field $k$ and a group $G$, we form the ring 
$$k[G]=\left\{\sum_{g\in G}\alpha_gg \mid \alpha_g\in k \right\}$$
called the {\bf group ring of $G$}. We can also define $k[G]=\{f\mid f\colon G\rightarrow k\}$.
We define following operations on $k[G]$:
$\left(\sum_{g\in G}\alpha_g g \right) + \left(\sum_{g\in G}\beta_g g \right) = \sum_{g\in G}\left(\alpha_g+\beta_g\right)g $,
$\lambda \left(\sum_{g\in G}\alpha_gg\right)  = \sum_{g\in G}(\lambda \alpha_g)g $
and the multiplication (recall the convolution definition) by 
$$ \left(\sum_{g\in G}\alpha_gg\right).\left(\sum_{g\in G}\beta_gg\right) =  \sum_{g\in G}\left(\sum_{t\in G}\alpha_t\beta_{t^{-1}g}\right)g   
= \sum_{g\in G}\left(\sum_{ts=g\in G}\alpha_t\beta_{s}\right)g. $$
With the above operations, $k[G]$ is an algebra called the {\bf group algebra} of $G$ (clearly it's a ring).
A representation $(\rho,V)$ for $G$ is equivalent to taking a $k[G]$-module $V$ (see the exercises below).

\begin{exercise}
Prove that $k[G]$ is a ring as well as a vector space of dimension $|G|$. In fact, it is a $k$-algebra.
\end{exercise}
\begin{exercise}
Let $k$ be a field. Let $G$ be a group. Then,
\begin{enumerate}
\item $(\rho, V)$ is a representation of $G$ if and only if $V$ is a $k[G]$-module.
\item $W$ is a $G$-invariant subspace of $V$ if and only if $W$ is a $k[G]$-submodule of $V$.
\item The representations $V$ and $V'$ are equivalent if and only if $V$ is isomorphic to $V'$ as $k[G]$-module. 
\item $V$ is irreducible if and only if $V$ is a simple $k[G]$-module.
\item $V$ is completely reducible if and only if $V$ is a semisimple module.
\end{enumerate}
\end{exercise}

We can rewrite Maschke's Theorem and Schur's Lemma in modules language:
{\color{red}
\begin{theorem}[Maschke's Theorem]
Let $G$ be a finite group and $k$ a field.
The ring $k[G]$ is semisimple if and only if $char(k)\nmid |G|$.
\end{theorem}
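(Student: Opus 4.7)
The plan is to translate the two halves of the already-proved Maschke's Theorem into module-theoretic language via the dictionary established in the exercise above: $k[G]$-modules correspond to representations of $G$, and $k[G]$-submodules correspond to $G$-invariant subspaces. Since the paper defines a semisimple ring as one every module over which is semisimple, I must show, assuming $\mathrm{char}(k) \nmid |G|$, that every submodule of every $k[G]$-module admits a complement; and conversely, exhibit a single $k[G]$-module with a submodule having no complement whenever $\mathrm{char}(k) \mid |G|$.

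For the forward direction, I would take an arbitrary $k[G]$-module $V$ (dropping the finite-dimensionality assumption that appeared in the original theorem) together with a $k[G]$-submodule $W$, and rerun the averaging argument from the proof of Maschke verbatim. The only place finite-dimensionality entered that earlier proof was the production of an initial $k$-linear projection $\pi \colon V \to V$ onto $W$, and that step extends to arbitrary $V$ simply by extending a basis of $W$ to a basis of $V$. The averaged operator
$$\pi' = \frac{1}{|G|} \sum_{t \in G} \rho(t)^{-1} \pi \rho(t)$$
is then, by the identical verification, a $G$-equivariant projection onto $W$, and $\ker(\pi')$ is the required $G$-invariant complement. Hence every $k[G]$-module is semisimple.

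For the converse, I would reuse the regular representation pointed to at the end of the original Maschke proof. Take $V = k[G]$ with its left regular action, and let $W = \ker(\epsilon)$, where $\epsilon \colon k[G] \to k$ is the augmentation map $\sum_g \alpha_g g \mapsto \sum_g \alpha_g$. A hypothetical $G$-invariant complement $W'$ would be one-dimensional (since $V/W \cong k$) and carry the trivial $G$-action, so $W'$ must sit inside the fixed subspace $V^G$. The crux is to identify $V^G$: if $v = \sum_g \alpha_g g$ satisfies $h \cdot v = v$ for every $h \in G$, equating coefficients forces $\alpha_g = \alpha_{hg}$ for all $g,h$, so $V^G = k \cdot e$ where $e := \sum_{g \in G} g$. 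But $\epsilon(e) = |G|$, which vanishes in $k$ by hypothesis, so $e \in W$; this forces $W' \subseteq W$, contradicting $W \cap W' = 0$.

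The main obstacle is not any single technical step but aligning both halves with the ring-theoretic formulation. The forward direction requires recognizing that the averaging trick needs no finite-dimensionality beyond the existence of an initial $k$-linear projection. The converse requires pinpointing exactly where the characteristic hypothesis bites: it is the collapse of the trivial subrepresentation $k \cdot e$ into the augmentation ideal $W$ precisely when $|G|$ vanishes in $k$, and this single observation is what destroys semisimplicity of $k[G]$.
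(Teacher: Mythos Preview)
Your proposal is correct and follows the same skeleton the paper intends: translate the earlier Maschke's Theorem through the representation/module dictionary, reusing the averaging projection for the forward direction and the augmentation ideal $W=\{\sum\alpha_g g\mid \sum\alpha_g=0\}$ inside the regular representation for the converse. The paper itself gives no separate proof of this module-theoretic restatement---it simply declares it a rewriting of the earlier result---and even in the earlier proof the converse is left as a bare claim (``We claim that $W$ is $G$-invariant but it has no $G$-invariant compliment'').

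Where you go beyond the paper is in two places. First, you notice that the paper's definition of a semisimple ring demands that \emph{every} module be semisimple, not just the finite-dimensional ones, and you observe that the averaging argument survives because the only use of finite dimension was to obtain an initial $k$-linear projection, which one gets in general by extending a basis. Second, you actually supply the missing converse argument: any $G$-invariant complement $W'$ would carry the trivial action (since $V/W\cong k$ trivially), hence lie in $V^G=k\cdot e$ with $e=\sum_{g}g$, and $\epsilon(e)=|G|=0$ forces $e\in W$, a contradiction. Both additions are sound and genuinely needed to match the ring-theoretic statement as the paper phrases it; the paper's version glosses over them.
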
}

\begin{proposition}[Schur's Lemma]
Let $M,M'$ be two non-isomorphic simple $R$ modules. 
Then $\Hom_R(M,M')=\{0\}$. Moreover, $\Hom_R(M,M)$ is a division ring.
\end{proposition}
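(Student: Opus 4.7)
The plan is to adapt the proof of Proposition~\ref{schurlemma} almost verbatim, replacing ``$G$-invariant subspace'' by ``$R$-submodule'' and ``$G$-map'' by ``$R$-module homomorphism''. The entire argument rests on the elementary observation that for any $T\in\Hom_R(M,M')$, the kernel $\ker(T)\subseteq M$ and the image $\mathrm{Im}(T)\subseteq M'$ are $R$-submodules; this is immediate from the module axioms and the $R$-linearity of $T$.

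For the first assertion, I would take an arbitrary $T\in\Hom_R(M,M')$ and analyse it via $\ker(T)$ and $\mathrm{Im}(T)$. Since $M$ is simple, $\ker(T)$ is either $0$ or $M$; in the latter case $T=0$ and there is nothing more to do. Otherwise $T$ is injective, and $\mathrm{Im}(T)$ is a nonzero submodule of the simple module $M'$, forcing $\mathrm{Im}(T)=M'$. Thus $T$ would be an isomorphism, contradicting the hypothesis $M\not\cong M'$; hence $T=0$.

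For the second assertion, $\End_R(M)=\Hom_R(M,M)$ is routinely a ring under pointwise addition and composition, with identity $\mathrm{Id}_M$ and zero element the zero map, so the task reduces to exhibiting a two-sided inverse for each nonzero element. Applying the same kernel-image argument with $M'=M$ shows that any nonzero $T\in\End_R(M)$ is both injective and surjective, hence a bijective $R$-linear map. Its set-theoretic inverse is automatically $R$-linear (a one-line verification using that $T$ is $R$-linear and bijective), so it belongs to $\End_R(M)$ and supplies the required inverse.

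I do not anticipate any real obstacle: the whole content is a direct translation of Proposition~\ref{schurlemma} into module language. The only conceptual simplification over the earlier argument is that one does \emph{not} need Maschke's theorem at this stage -- simplicity of $M$ (resp.\ $M'$) is \emph{by definition} the statement that it has no proper nonzero submodules, which is precisely what the kernel/image dichotomy consumes.
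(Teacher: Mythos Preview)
Your proposal is correct and is precisely the intended argument; the paper leaves this proof as an exercise, so there is nothing further to compare. Your closing observation that Maschke's theorem is unnecessary here---simplicity alone drives the kernel/image dichotomy---is correct and in fact applies equally to the paper's own proof of Proposition~\ref{schurlemma}.
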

\begin{proof}
The proof is left as an exercise.
\end{proof}
\begin{exercise}
Let $D$ be a finite-dimensional division algebra over $\mathbb C$ then $D=\mathbb C$.
\end{exercise}
\begin{exercise}
Let $R$ be a finite-dimensional algebra over $\mathbb C$ and $M$  a simple module over $R$. Suppose $M$ is a finite-dimensional 
over $\mathbb C$. Then $\Hom_R(M,M)\cong \mathbb C$.
\end{exercise}

\chapter{Constructing New Representations}

Here we will see how we can get new representations out of the known ones.
All of the representations are considered over any field $k$. Let $(\rho,V)$ be a representation of $G$. The {\bf character} of (corresponding to) representation $\rho$ is a map $\chi \colon 
G\rightarrow k$ is defined by $\chi(t) = tr(\rho(t))$ where $tr$ is the trace of the corresponding 
matrix. We will deal with ``character theory'' in the following chapters.

\section{Subrepresentation of a Representation}
Suppose we have a representation $(\rho, V)$ of $G$, i.e., we have $\rho\colon G\rightarrow 
\GL(V)$. 
Let $W$ be a $G$-invariant subspace of $V$. Then, we can define a subrepresentation $(\tilde\rho, 
W)$ of $G$ i.e., $\tilde\rho \colon G \rightarrow \GL(W)$ by $\tilde\rho(t)(w)=\rho(t)(w)$. In 
matrix 
notation, if we choose a basis of $W$ first and extend it to a basis of $V$ then 
$\rho(g)=\begin{pmatrix} A(g) & B(g) \\ & C(g)\end{pmatrix}$ and $\tilde\rho(g) = A(g)$.

\section{Sum of Representations}
Let $(\rho,V)$ and $(\rho',V')$ be two representations of the group $G$ of dimension $n$ and $n'$ respectively. 
We define $(\rho\oplus\rho', V\oplus V')$ by $(\rho\oplus\rho')(t)(v,v')=(t(v),t(v'))$.  
This is called the sum of the two representations. 
In matrix notation if the representations are $\rho\colon G\rightarrow \GL_n(k)$ and $\rho' \colon 
G\rightarrow \GL_{n'}(k)$ then $(\rho\oplus \rho')(t) = \begin{pmatrix} \rho(t)& \\ 
&\rho'(t)\end{pmatrix}$. 
\begin{exercise}
If $\chi$ and $ \chi'$ are characters of $\rho$ and $ \rho'$ respectively then the character of $\rho\oplus \rho'$ is $\chi+\chi'$. 
\end{exercise}
 
If $\rho$ is a representation and $V=W\oplus W'$ is a $G$-invariant decomposition then $\rho$ is a 
sum of its two subrepresentations. Clearly, the idea of the sum of two representations defined here can 
be extended to a finite sum.

\section{Adjoint Representation}
Let $V$ be a vector space over $k$ of dimension $n$. Let us recall some basic linear algebra. A 
linear map $f\colon V\rightarrow k$ is called a {\bf linear functional}. 
We denote $V^*=\Hom_k(V,k)$, the set of all linear functionals. We define operations on $V^*$ by $(f_1+f_2)(v)=f_1(v)+f_2(v)$ and $(\lambda f)(v)=
\lambda f(v)$ and it becomes a vector space. The vector space $V^*$ is called the dual space of $V$.

\begin{exercise}
Let $V$ be a vector space over $k$ with a basis $\{e_1, \ldots, e_n\}$. With the notation as above,
\begin{enumerate}
\item Check that $V^*$ is a vector space with basis $e_i^*$ which are defined by $e_i^*(e_j)=\delta_{ij}$. 
Hence it has the same dimension as $V$. After fixing a basis of $V$ we can obtain a basis of $V^*$ 
this way which is called a {\bf dual basis} with respect to the given one.
\item Show that $V$ is ``naturally'' isomorphic (by a map which is defined without the requirement of 
a basis and is an isomorphism) to $V^{**}$.
\end{enumerate}
\end{exercise}
\noindent Let $T\colon V\rightarrow V$ be a linear map. We define a map $T^*\colon V^*\rightarrow 
V^*$ by $T^*(f)(v)=f(T(v))$ represented in diagram as follows:
\[\xymatrix{V \ar[rd]_{T^*(f)} \ar[r]^T & V \ar[d]^{f} \\
 & k.
}\]
The map $T^*$ is called {\bf dual or adjoint} of $T$. 
\begin{exercise}
Fix a basis of $V$ and consider the dual basis of $V^*$ with respect to that (as in the previous 
exercise). Let $A=(a_{ij})$ be the matrix of $T$. Show that the matrix of $T^*$ with respect to the 
dual basis is $\tr A =(a_{ji})$, the transpose matrix.
\end{exercise}

Now, with the knowledge of the dual/adjoint, we can define adjoint representation. Let $\rho\colon 
G\rightarrow \GL(V)$ be a representation. We define the {\bf adjoint representation} $(\rho^*, 
V^*)$ 
as follows: $\rho^* \colon G\rightarrow \GL(V^*)$ where $\rho^*(g)=\rho(g^{-1})^*$ (the inverse in 
this definition is forced to make $\rho^*$ a group homomorphism).
In the matrix form if we have a representation $\tau \colon G\rightarrow \GL_n(k)$ then 
$\tau^*\colon G\rightarrow \GL_n(k)$ is given by $\tau^*(g)=\tra \tau(g)^{-1}$. 
\begin{exercise}
With the notation as above,
\begin{enumerate}
\item Show that $\rho^*$ is a representation of $G$ of the same dimension as $\rho$.
\item Fix a basis and suppose $\tau$ is the matrix form of $\rho$ then show that the matrix form of $\rho^*$ is $\tau^*$.
\item Prove that if $\rho$ is irreducible then so is $\rho^*$. 
\end{enumerate} 
\end{exercise}
The last exercise will become easier in the case of complex representations once we define characters as we will have a simpler criterion to test when a representation is irreducible. 
\begin{exercise}
When $k=\mathbb C$, we get that the character of $\rho^*$, $\chi^*=\bar \chi$. Further, $\chi$ is 
irreducible if and only if $\chi^*$ is irreducible (Over $\mathbb C$, this would be easier to prove 
using orthogonality relations).
\end{exercise}

\section{Restriction of a Representation}
Let $(\rho,V)$ be a representation of the group $G$. 
Let $H$ be a subgroup.  
Then $(\rho,V)$ is a representation of $H$ also obtained by composing $H\hookrightarrow G 
\rightarrow \GL(V)$ denoted as $\rho_H$ or $\rho|_H$ or $Res(\rho)$.

\section{Lift of a Representation}
Let $N$ be a normal subgroup of $G$. Thus, we have the quotient map $\pi\colon G\rightarrow G/N$. 
Then any representation of $G/N$ gives rise to a representation of $G$: 
$$G\stackrel{\pi} {\rightarrow} G/N \stackrel{\rho}{\rightarrow} \GL(V).$$ 
Moreover, if the representation of $G/N$ is irreducible then the representation of $G$ remains 
irreducible. We use this technique to obtain $1$-dimensional representations by taking $N=[G,G]$.

\section{Tensor Product of two Representations}\label{tensor}

There is a general method to construct the tensor product of two $R$ modules $M$ and $M'$, namely $M\otimes_R M'$ which is again an $R$-module. It can be done even when $R$ is not commutative. The tensor product is an $R$-module denoted as $M\otimes_R M'$ which is defined by the following universal property:
The module $(T, \phi)$ where $\phi\colon M\times M' \rightarrow T$ is a bilinear map, 
is said to be the tensor product (to be denoted as $M\otimes M'$) if for any $R$-module $P$ and 
any bilinear map $\beta\colon M\times M' \rightarrow P$ there exists a unique linear map $\tilde 
\beta$ such that the following diagram commutes: 
\[\xymatrix{M\times M' \ar[r]^\beta \ar[rd]_{\phi} & P  \\
& M\otimes M'\ar[u]_{\tilde\beta}
}\]
i.e., $\beta = \tilde\beta\phi$. At the moment we need this only for vector spaces which is easier 
to deal with.

Let $V$ and $V'$ be two vector spaces over $k$. First, we define the tensor product of two vector spaces as follows. Consider $\mathcal V = \displaystyle\bigoplus_{(v,v')\in V\oplus V'}k{(v,v')}$ (where $(v,v')$ serve as a basis) and the subspace $\mathcal W$ spanned by elements $(v_1+v_2, v') - (v_1, v')-(v_2,v')$,  $(v, v_1'+ v_2') - (v, v_1')-(v,v_2')$, $a(v,v')-(av,v')$ and $a(v,v')-(v, av')$ for all $v, v_1, v_2\in V$, $v', v_1', v_2'\in V'$ and $a\in k$. Then the tensor product denoted as $V\otimes V'$ is given by the quotient space $\mathcal V/\mathcal W$. Most of the time we don't need to worry about the construction but rather get used to the idea of using it. Thus a more practical way is as follows.

Tensor product of $V$ and $V'$ is a vector space $V\otimes V'=\left\{\sum_{i=1}^r v_i\otimes v_i'\mid v_i\in V, v_i'\in V'\right\}$ with the following properties:
\begin{itemize}
\item $\left(\sum_{i=1}^r v_i\otimes v_i'\right) + \left(\sum_{i=1}^s w_i\otimes w_i'\right)=  v_1\otimes v_1'+\cdots + v_r\otimes v_r' + 
w_1\otimes w_1'+ \cdots + w_s\otimes w_s'.$
\item $(v_1+v_2)\otimes v'=v_1\otimes v' + v_2\otimes v'$ and $v\otimes(v_1'+v_2')=v\otimes v_1' + v\otimes v_2'$.
\item $\lambda \left(\sum_{i=1}^r v_i\otimes v_i'\right) = \sum_{i=1}^r \lambda v_i\otimes v_i' = \sum_{i=1}^r v_i\otimes \lambda v_i'.$
\end{itemize}
Let $\{e_1,\ldots,e_n\}$ be a basis of $V$ and $\{e_1',\ldots,e_m'\}$ be that of $V'$. Then $\{e_i\otimes e_j' \mid 1\leq i\leq n, 1\leq j\leq m\}$
is a basis of the vector space $V\otimes V'$ hence the dimension of $V\tensor V'$ is $nm$.

\noindent{\bf Warning: } Elements of $V\otimes V'$ are not exactly of kind $v\otimes v'$ but they are supposed to be a finite sum of these ones!

\begin{exercise}
For any vector space $W$ and given a bilinear map $\phi \colon V\times V' \rightarrow W$ there exists a linear map $\tilde \phi \colon V\otimes V' \rightarrow W$ such that $\phi(v,v') = \tilde\phi(v\otimes v')$. Thus, $Bil_k(V\times V', ?) = End_k(V\otimes V', ?)$. 
\end{exercise}
Given linear maps $T\in End(V)$ and $S\in End(V')$ we can define a linear map $T\otimes S\in End(V\otimes V')$ as follows: $(T\otimes S)(\sum v\otimes v') = \sum T(v)\otimes S(v')$.
\begin{exercise}
Given the matrix of $T$ and $S$ compute the matrix of $T\otimes S$. If the matrix of $T$ with respect to the basis $\{e_i\}$ is $A=(a_{ij})$ and the matrix of $S$ with respect to the basis $\{e_i'\}$ is $B=(b_{ij})$ then the matrix of $T\otimes S$ with respect to the basis $\{e_r\otimes e_s'\}$ will turn out to be $(a_{ij}B)$. 
\end{exercise}
\begin{exercise}
Show that $Trace(T\otimes S)= Trace(T)Trace(S)$.
\end{exercise}

With the definition of tensor product in hand, we can define new representations as follows. Let $(\rho,V)$ and $(\rho',V')$ be two representations of the group $G$ of dimension $n$ and $n'$ respectively. 
We define a representation $(\rho\tensor\rho', V\tensor V')$ of $G$ as follows: 
$$(\rho\tensor\rho')(t)(\sum v\tensor v')=\sum(\rho(t)(v)\tensor \rho'(t)(v')).$$
\begin{exercise} 
Choose a basis of $V$ and $V'$. Let $A=(a_{ij})$ be the matrix of $\rho(t)$ and $B=(b_{lm})$ be that of $\rho'(t)$.
What is the matrix of $(\rho\tensor\rho')(t)$? 
\end{exercise}
\begin{exercise}
If $\chi$ and $ \chi'$ are characters of $\rho$ and $ \rho'$ respectively then the character of $\rho\otimes \rho'$ is $\chi\chi'$. 
\end{exercise}

If  $(\rho,V)$ is a representation of $G$ then $V^{\tensor d}=V\otimes\cdots\otimes V, Sym^d{V}, \wedge^{d}(V)$ are also representations of $G$. 
This way starting from one representation we can get many representations. 
Though, even if you start from an irreducible representation the above-constructed representations need not be irreducible (for example $V\otimes V$ given above) but often they contain other  irreducible representations.
Writing down the direct sum decomposition of tensor representations is an important topic of study. 
Often it happens (for semisimple Lie algebras) that we need a much smaller number of representations (called {\bf fundamental representations}) of which tensor products contain all irreducible representations (called highest weight theory in the case of semisimple Lie algebras).

\begin{exercise}
Let $(\rho,V)$ be a representation of $G$ and $\chi$ be its character. Show that the character of 
$\rho^{\otimes m}$ is $\chi^m$.  
\end{exercise}

\begin{exercise}
Can we decompose $V^{\otimes n}$ in terms of $Sym^i(V)$ and $\wedge^j(V)$? 
\end{exercise}

\begin{exercise}
Consider the group $\SL_2(\mathbb R)$. This group has a natural representation on $2$-dimensional 
real vector space $V_2$. We write this, say with basis $\{x,y\}$, as follows: for $g\in 
\SL_2(\mathbb 
R)$ write $g=\begin{pmatrix} a& b \\c&d\end{pmatrix}$ then, $$g.x=ax+cy \ \ {\rm and}\ \ 
g.y=bx+dy.$$ 
Now consider a real vector space $V_{d+1}$ for $d\geq 1$ with basis 
$$\{x^d, x^{d-1}y, x^{d-2}y^2,\ldots, xy^{d-1}, y^d\}.$$
We define a representation $\rho_{d+1}\colon \SL_2(\mathbb R) \rightarrow \GL(V_{d+1})$ as follows: 
$$\rho_{d+1}(g)(x^iy^j):=(g.x)^i(g.y)^j$$
where the right-hand side is expanded as polynomials. Show that $\rho_{d+1}$ is an irreducible 
representation. Further, $\rho_{d+1}$ is nothing by symmetric power representation $sym^{d}(V_2)$. 
Thus, $V_2$ is an example of fundamental representation in this case. 
\end{exercise}


\section{Decomposition of the Representation $V\otimes V$}\label{sym-rep}

Let $G$ be a finite group. Suppose that $(\rho,V)$ and $(\rho',V')$ are representations of $G$.
Then we can get a new representation of $G$ from these  representations defined as follows (recall 
from Section~\ref{tensor}):
\begin{eqnarray*}
  \rho\otimes \rho' \colon G &\rightarrow & \GL(V \otimes V') \\
  (\rho\otimes \rho')(g)(v \otimes v') &=& \rho(g) (v) \otimes \rho'(g) (v')
\end{eqnarray*}
Now suppose $\rho$ and $\rho'$ are representations over $\mathbb C$ and $\chi$ and $\chi'$ are the 
corresponding characters, then the character of $\rho \otimes \rho'$ is $\chi\chi'$ given by 
$(\chi\chi')(g)=\chi(g)\chi'(g)$. However, even if $\rho$ and $\rho'$ are irreducible $\rho \otimes 
\rho'$ need not be irreducible.

Let $(\rho,V)$ be a representation of $G$. We consider $(\rho \otimes \rho, V\otimes V)$. 
As defined above it is a representation of $G$ with character $\chi^2$ where $\chi^2(g)=\chi(g)^2$. 
Recall from section~\ref{tensor} it can be decomposed (as a vector space) as   
$$V \otimes V = Sym^2(V) \oplus \wedge^2(V).$$
We prove below that both $Sym^2(V)$ and $\wedge^2(V)$ are $G$-spaces.
{\color{red}
\begin{theorem}
Let $(\rho,V)$ be a $\mathbb C$-representation of $G$. Then, $V \otimes V = Sym^2(V) \oplus \wedge^2(V)$ where each of the subspaces $Sym^2(V) $ and $\wedge^2(V)$ are $G$-invariant.
  \end{theorem}}
 \begin{proof}
  Let $\{v_1,\ldots,v_n\}$ be a basis of $V$. Then $\{v_i \otimes v_j \mid 1\leq i,j \leq n\}$ is a 
basis of $V \otimes V$ with dimension $n^2$. Consider the linear map $\theta$ defined on the basis of $V \otimes V$ by   $\theta(v_i \otimes v_j) = v_j \otimes v_i $ and extended linearly.
 We observe that $\theta$ can also be defined without the help of any basis by $\theta(v\otimes 
w)=w\otimes v$ since if $v = \sum a_i v_i$ and $w =\sum b_j v_j$, then
 $   \theta(v \otimes w) = \theta\left(\sum a_i v_i \otimes \sum b_j v_j\right) =
 \sum a_i b_j \theta(v_i \otimes v_j) = \sum a_i b_j (v_j \otimes v_i)=
 \sum (b_j v_j \otimes a_i v_i)  = (w \otimes v)$.
Observe that $\theta^2=1$ and we take the subspaces of $V \otimes V$ corresponding to eigenvalues 
$1$ and $-1$:
\begin{eqnarray*}
  Sym^2(V) &=& \{x \in V \otimes V \mid  \theta(x) = x\} \\
  \wedge^2(V) &=& \{x \in V \otimes V \mid \theta(x) = -x\}
\end{eqnarray*}

\noindent Note that  $\{(v_i \otimes v_j + v_j \otimes v_i) \mid 1\leq i \leq j \leq n\}  $   is a 
basis for $Sym^2(V)$ and hence its dimension is $ \frac{n(n+1)}{2}$ and  $\{(v_i \otimes v_j - v_j 
\otimes v_i) \mid 1\leq i < j \leq n\}$ is a basis for $\wedge^2(V)$ and hence dimension is $ 
\frac{n(n-1)}{2}$.

 We claim that $Sym^2(V)$ and $\wedge^2(V)$ both are $G$-invariant. 
 Suppose that $v \otimes w \in Sym^2(V)$ and $g \in G$ then we have
 \begin{eqnarray*}
  \theta (\rho(g) (v \otimes w))  &=& \theta\left(\rho(g)\left(\sum \lambda_{ij}(v_i \otimes v_j + 
v_j \otimes v_i)\right)\right) \\
    &=& \theta\left (\sum \lambda_{ij}(\rho(g) v_i \otimes \rho(g) v_j +  \rho(g) v_j \otimes 
\rho(g) v_i)\right) \\
    &=&  \sum \lambda_{ij}\left[\theta(\rho(g) v_i \otimes \rho(g) v_j) + \theta( \rho(g) v_j 
\otimes \rho(g) v_i)\right]\\
    &=& \sum \lambda_{ij}[\rho(g) v_j \otimes \rho(g) v_i +  \rho(g) v_i \otimes \rho(g) v_j)]\\
    &=& \rho(g)\left(\sum \lambda_{ij}(v_j \otimes v_i + v_i \otimes v_j)\right)\\
    &=& \rho(g) (v \otimes w).
 \end{eqnarray*}
We also see that  $Sym^2(V) \cap \wedge^2(V) = \{0\}$. Also, their individual dimensions add up to 
$\frac{n(n+1)}{2} + \frac{n(n-1)}{2} = n^2 = \dim(V \otimes V)$, hence we have  $V \otimes V = 
Sym^2(V) \oplus \wedge^2(V)$ as $G$-spaces.
  \end{proof}
{\color{red}  
\begin{theorem} The characters of $Sym^2(V)$ and $\wedge^2(V)$ are $\chi_S$ and $\chi_A$ 
respectively given by
  \begin{eqnarray*}
    \chi_S (g) &=& \frac{1}{2}\left (\chi^2 (g)+\chi(g^2)\right) \\
    \chi_A (g) &=& \frac{1}{2}\left (\chi^2 (g)-\chi(g^2)\right).
  \end{eqnarray*}
  \end{theorem}}
  \begin{proof}
Suppose that $|G|=d$. Then for any $g \in G$, $(\rho(g))^d = I$. Thus, $m(X)$, the minimal 
polynomial of $\rho(g)$, divides the polynomial $p(X) = X^d - 1$. Since $p(X)$ has distinct roots so 
will $m(X)$ and hence $\rho(g)$ is diagonalisable.

Let $\{e_1,\cdots e_n\}$ be an eigenbasis for $V$ and let $\{\lambda_1,\cdots,\lambda_n\}$ be the corresponding eigenvalues. Then, from the proof of the previous theorem, it follows that $\{(e_i \otimes 
e_j - e_j \otimes e_i) \mid i<j\}$ is an eigenbasis for $\wedge^2(V)$ with corresponding eigenvalues $\{\lambda_i \lambda_j \mid i<j\}$.
 We now have
$$    \chi_A (g) = Tr(\rho \otimes \rho)(g) = \sum_{i<j}\lambda_i \lambda_j = 
\frac{1}{2}\left(\left(\sum \lambda_i\right)^2-\sum \left(\lambda_i ^2\right)\right)   = \frac{1}{2} 
\left(\chi^2 (g)-\chi(g^2)\right).$$
Now, we can calculate $\chi_S$ as $\chi^2=\chi_S +\chi_A$.
  \end{proof}

\section{Induced Representation}
See the chapter~\ref{ind-rep}.

\chapter*{}
\vskip10cm
\begin{center}
{\bf \Huge PART -- II}
\end{center}
\vskip5cm
This part deals with the Character Theory where we mostly work with representations over $\mathbb C$.

\chapter{Matrix Elements}
Now onwards we assume that the field $k=\C$. 
We also denote $\mathbb S_1=\{\alpha\in \mathbb C \mid |\alpha|=1\}$.
Let $G$ be a finite group. Then, $\mathbb C[G] = \{f\mid f\colon G\rightarrow \mathbb C\}$ is a 
vector space of dimension $|G|$.  
Let $f_1,f_2$ be two functions from $G$ to $\mathbb C$, i.e., $f_1,f_2\in \mathbb C[G]$. 
We define a map $(\ ,\ )\colon \mathbb C[G] \times \mathbb C[G] \rightarrow \mathbb C$ as follows, 
$$
(f_1,f_2)=\frac{1}{|G|}\sum_{t\in G}f_1(t)f_2(t^{-1}).
$$
Note that $(\ ,\ )$ is a symmetric bilinear form.

Let $\rho \colon G\rightarrow \GL(V)$ be a representation. We can choose a basis and get a map in 
matrix form $\rho \colon G\rightarrow \GL_n(\mathbb C)$ where $n$ is the dimension of the 
representation.
This means we have,
$$
g\mapsto \left[\begin{matrix} a_{11}(g) & a_{12}(g)&\cdots& a_{1n}(g)\\ a_{21}(g) & a_{22}(g)&\cdots& a_{2n}(g)\\
\vdots & \vdots &\ddots &\vdots\\
a_{n1}(g) & a_{n2}(g)&\cdots& a_{nn}(g) 
\end{matrix}\right]
$$
where the matrix entries are $a_{ij}\colon G\rightarrow \mathbb C$, i.e, $a_{ij}\in \mathbb C[G]$.
The maps $a_{ij}$'s are called {\bf matrix elements} of $\rho$.
Thus, to a representation $\rho$ we can associate a subspace $\mathcal W$ of $\mathbb C[G]$ spanned 
by $a_{ij}$. It can be shown that this subspace does not depend on the chosen basis.
In what follows, we will explore relations between these subspaces $\mathcal W$ associated to 
irreducible representations of a finite group $G$.
Let $\rho_1, \ldots, \rho_r, \cdots$ be irreducible representations of $G$ of dimension $n_1,\cdots, 
n_r, \cdots$ respectively.
We don't know yet whether there are finitely many irreducible representations which we will prove later.
Let $\mathcal W_1,\cdots, \mathcal W_r, \cdots$ be associated subspaces of $\mathbb C[G]$ to the irreducible representations.
{\color{red}
\begin{theorem}\label{matrixelements}
Let $(\rho, V)$ and $(\rho', V')$ be two non-equivalent irreducible representations of $G$ of dimension $n$ and $n'$ respectively. 
Let $a_{ij}$ and $b_{ij}$ be the corresponding matrix elements with respect to some fixed basis of 
$V$ and $V'$ respectively. Then,
\begin{enumerate}
\item $(a_{il},b_{mj})=0$ for all $i,j,l,m$.
\item $(a_{il},a_{mj}) = \frac{1}{n}\delta_{ij}\delta_{lm} = 
\begin{cases} \frac{1}{n} & \mbox{if $i=j$ and $l=m$}\\
0 &\mbox{otherwise}. 
\end{cases}$
\end{enumerate}
\end{theorem}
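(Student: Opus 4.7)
The plan is to apply Schur's Lemma via an averaging construction that turns any linear map between the representation spaces into a $G$-equivariant one. Given an arbitrary linear map $T\colon V\rightarrow V'$, I would form
$$T^{\#} = \frac{1}{|G|}\sum_{t\in G}\rho'(t^{-1})\,T\,\rho(t),$$
and verify by the usual reindexing trick (substituting $s=tg$) that $T^{\#}\rho(g)=\rho'(g)T^{\#}$ for all $g\in G$, so $T^{\#}$ is a $G$-map.

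Next I would split into the two cases dictated by Schur's Lemma (Proposition~\ref{schurlemma}) and its corollary (Corollary~\ref{corschur}). In case $(1)$ the two irreducibles are to be interpreted as non-equivalent, so Schur forces $T^{\#}=0$. In case $(2)$, $\rho=\rho'$ with the same basis, so Corollary~\ref{corschur} gives $T^{\#}=\lambda\cdot\mathrm{Id}$ for some $\lambda\in\mathbb{C}$; taking traces and using $\mathrm{tr}(\rho(t^{-1})T\rho(t))=\mathrm{tr}(T)$ yields $\lambda = \mathrm{tr}(T)/n$.

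To extract the orthogonality relations I would specialize $T$ to the elementary matrix $E_{lm}$ (a $1$ in position $(l,m)$, zeros elsewhere). Writing out the $(i,j)$-entry of $T^{\#}$ in coordinates,
$$(T^{\#})_{ij} = \frac{1}{|G|}\sum_{t\in G}\rho'(t^{-1})_{il}\,\rho(t)_{mj} = \frac{1}{|G|}\sum_{t\in G}b_{il}(t^{-1})\,a_{mj}(t),$$
which is exactly the pairing $(a_{mj},b_{il})$ defined on $\mathbb{C}[G]$. In the non-equivalent case this vanishes for all index choices, giving part $(1)$ after using the symmetry $(f_1,f_2)=(f_2,f_1)$ and relabeling. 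In the case $V=V'$, the scalar is $\lambda=\mathrm{tr}(E_{lm})/n = \delta_{lm}/n$, so $(T^{\#})_{ij}=\delta_{lm}\delta_{ij}/n$, which gives $(a_{mj},a_{il})=\delta_{ij}\delta_{lm}/n$ and hence part $(2)$.

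The conceptual content is entirely Schur's Lemma; the only real work is the bookkeeping that matches the matrix entries of the averaged operator with the prescribed inner product on $\mathbb{C}[G]$. The main subtlety to watch is the $t^{-1}$ appearing inside the pairing — that is precisely what the factor $\rho'(t^{-1})$ on the left of $T$ in the definition of $T^{\#}$ is designed to produce, and it is what makes $T^{\#}$ an intertwiner rather than something weaker. Once the indices are correctly lined up, the result is immediate.
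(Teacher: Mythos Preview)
Your proposal is correct and follows essentially the same approach as the paper: average an arbitrary linear map to produce an intertwiner, invoke Schur's Lemma (and its corollary over $\mathbb C$), then read off the matrix entries. The only cosmetic difference is that the paper treats the entries $x_{lm}$ of $T$ as indeterminates and equates coefficients, whereas you specialize $T=E_{lm}$ directly; these are equivalent bookkeeping devices, and your direction of averaging ($\rho'(t^{-1})T\rho(t)$ for $T\colon V\to V'$) is in fact the dimensionally cleaner one.
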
}
\begin{proof}
Let $T\colon V\rightarrow V'$ be a linear map. 
Define 
$$
T^0=\frac{1}{|G|}\sum_{t\in G}\rho(t)T (\rho'(t))^{-1}.
$$ 
Then, $T^0$ is a $G$-linear map. Using Schur's Lemma~\ref{schurlemma}, we get $T^0=0$.
Let us denote the matrix of $T$ by $x_{lm}$.
Then, $ij$th entry of $T^0$ is zero for all $i$ and $j$, i.e.,
$$ \frac{1}{|G|} \sum_{t\in G} \sum_{l,m} a_{il}(t)x_{lm}b_{mj}(t^{-1}) = 0.$$
Since $T$ is arbitrary linear transformation the entries $x_{lm}$ are arbitrary complex number hence 
can be treated as indeterminate.
Hence, coefficients of $x_{lm}$ are $0$.
This gives $\frac{1}{|G|} \sum_{t\in G} a_{il}(t)b_{mj}(t^{-1}) = 0$ hence $(a_{il}, b_{mj})=0$ for all $i,j,l,m$.

Now, let us consider $T \colon V\rightarrow V$, a linear map.
Again define $ T^0=\frac{1}{|G|}\sum_{t\in G}\rho(t)T (\rho(t))^{-1}$ which is a $G$-map. From 
Corollary~\ref{corschur}, we get that
$T^0=\lambda. Id$ where $\lambda = \frac{1}{n} tr(T)=\frac{1}{n}\sum_{l}x_{ll}= \frac{1}{n}\sum_{l,m}x_{lm}\delta_{lm}$  since $n.\lambda = tr(T^0) = \frac{1}{|G|}\sum_{t\in G} tr(T)=tr(T)$.
Now using matrix elements we can write $ij$th term of $T^0$:
$$\frac{1}{|G|} \sum_{t\in G}\sum_{l,m} a_{il}(t)x_{lm}a_{mj}(t^{-1}) = \lambda \delta_{ij} = 
\frac{1}{n}\sum_{l,m}x_{lm}\delta_{lm}\delta_{ij}.$$
Again $T$ is an arbitrary linear map so its matrix elements $x_{lm}$ can be treated as indeterminate. Comparing coefficients
of $x_{lm}$ we get:
$$
\frac{1}{|G|} \sum_{t\in G} a_{il}(t)a_{mj}(t^{-1}) =  \frac{1}{n}\delta_{lm}\delta_{ij}.
$$
Which gives $(a_{il},a_{mj})= \frac{1}{n}\delta_{lm}\delta_{ij}$.
\end{proof}

\begin{corollary}
If $f\in \mathcal W_i$ and $f'\in\mathcal W_j$ with $i\neq j$ then $(f,f')=0$, i.e., $(\mathcal 
W_i,\mathcal W_j)=0$.
\end{corollary}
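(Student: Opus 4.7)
The plan is to reduce the statement to an immediate consequence of part (1) of Theorem~\ref{matrixelements} by invoking bilinearity of the form $(\cdot,\cdot)$. Observe first that the pairing defined by
$$
(f_1,f_2)=\frac{1}{|G|}\sum_{t\in G}f_1(t)f_2(t^{-1})
$$
is plainly bilinear in each argument (no complex conjugation is involved), so it suffices to verify the corollary on spanning sets of $\mathcal W_i$ and $\mathcal W_j$.

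Next I would unwind the definitions: by construction, $\mathcal W_i$ is spanned by the matrix elements $\{a_{kl}^{(i)}\}$ of the irreducible representation $\rho_i$ with respect to a fixed basis, and similarly $\mathcal W_j$ is spanned by $\{a_{mn}^{(j)}\}$. So for arbitrary $f\in\mathcal W_i$ and $f'\in\mathcal W_j$ we may write $f=\sum_{k,l}c_{kl}a_{kl}^{(i)}$ and $f'=\sum_{m,n}d_{mn}a_{mn}^{(j)}$ and expand
$$
(f,f')=\sum_{k,l,m,n}c_{kl}\,d_{mn}\,(a_{kl}^{(i)},a_{mn}^{(j)}).
$$
Since $i\neq j$, the representations $\rho_i$ and $\rho_j$ are (non-equivalent) irreducible representations of $G$; this is precisely the hypothesis under which part (1) of Theorem~\ref{matrixelements} applies, yielding $(a_{kl}^{(i)},a_{mn}^{(j)})=0$ for all $k,l,m,n$. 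Consequently every term in the expansion vanishes and $(f,f')=0$, which proves $(\mathcal W_i,\mathcal W_j)=0$.

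There is essentially no obstacle here beyond making the identification between the two formulations explicit. The only point that deserves a brief remark is that the indexing convention $\rho_1,\rho_2,\ldots$ tacitly lists \emph{pairwise non-equivalent} irreducible representations, since otherwise Theorem~\ref{matrixelements}(1) (whose proof invokes Schur's Lemma to conclude $T^0=0$) would not apply. Once this is granted, the corollary is a direct bilinear-extension argument, with no further calculation needed.
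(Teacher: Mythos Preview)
Your argument is correct and is exactly the intended one: the paper states this corollary without proof immediately after Theorem~\ref{matrixelements}, treating it as an obvious consequence of part~(1) via bilinearity of $(\cdot,\cdot)$ on the spanning matrix elements. Your remark that the list $\rho_1,\rho_2,\ldots$ consists of pairwise non-equivalent irreducibles is the only point worth making explicit, and you have done so.
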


\begin{exercise}
Prove that, in both cases, $T^0$ is a $G$-map.
\end{exercise}

\begin{exercise}
Let $a_{ij}$ and $a_{ij}'$ be the set of two different matrix elements of the representation 
$\rho$ with respect to a different basis. Then, there exists a liner transformation $P$, namely the 
base change, such that $[a_{ij}(g)] = P[a_{ij}'(g)]P^{-1}$. Using this we can show that the 
subspace $<a_{ij}> = <a_{ij}'> \subset \mathbb C[G]$.   
\end{exercise}

\chapter{Character Theory}\label{chartheory}

We have $\mathbb C[G]$, space of all complex-valued functions on $G$ which is a vector space of dimension $|G|$. 
We define an {\bf inner product} $\langle\ ,\ \rangle \colon \mathbb C[G] \times \mathbb C[G] 
\rightarrow \mathbb C$ by 
$$
\langle f_1,f_2 \rangle = \frac{1}{|G|}\sum_{t\in G}f_1(t)\overline{f_2(t)}.
$$
\begin{exercise}
With the notation above,
\begin{enumerate}
\item Prove that $\langle\ ,\ \rangle$ is an inner product (non-degenerate) on $\mathbb C[G]$.
\item If $f_1$ and $f_2$ take value in $\mathbb S_1\subset \mathbb C$ then $\langle f_1,f_2\rangle = 
(f_1, f_2\circ \iota)$ where $\iota \colon g\mapsto g^{-1}$.
\end{enumerate}
\end{exercise}

{\color{teal}
\begin{definition}[Character of a Representation]
Let $(\rho,V)$ be a representation of $G$. 
The {\bf character} of (corresponding to) a representation $\rho$ is a map $\chi \colon 
G\rightarrow \mathbb C$ defined by $\chi(t) = tr(\rho(t))$ where $tr$ is the trace of corresponding 
matrix. The character of an irreducible representation is said to be {\bf irreducible character}.
\end{definition}}
\noindent Strictly speaking, it is $\chi_{\rho}$ but for the simplicity of notation we write $\chi$ 
only when it is clear which representation it corresponds to.
\begin{exercise}
Let $A,B\in \GL_n(k)$. Prove the following:
\begin{enumerate}
\item $tr(AB)=tr(BA)$.
\item $tr(A) =tr(BAB^{-1})$.
\end{enumerate}
\end{exercise}

\begin{exercise}
Usually, we define the trace of a matrix. Show that in the above definition of character it is a well-defined function. That is, prove that $\chi(t)$ doesn't change if we choose a different basis and 
calculate the trace of $\rho(t)$. This is another way to say that trace is an invariant of the 
conjugacy classes of $\GL_n(k)$.
\end{exercise}

\begin{exercise}
If $\rho$ and $\rho'$ are two isomorphic representations, i.e., they are $G$-equivalent, then the 
corresponding characters are the same. The converse of this statement is also true which we will prove 
later.
\end{exercise}

\begin{proposition}\label{character}
If $\rho$ is a representation of dimension $n$, and $\chi$ is the corresponding character then,
\begin{enumerate}
\item $\chi(1)=n$, the dimension of the representation.
\item $\chi(t^{-1})=\overline{\chi(t)}$ for all $t\in G$ where $\ \bar{}\ $ denotes the complex 
conjugation.
\item $\chi(tst^{-1})=\chi(s)$ for all $t, s\in G$, i.e., character is constant on the 
conjugacy classes of $G$.
\item For $f\in \mathbb C[G]$ we have $(f,\chi)=\langle f,\chi \rangle$.
\end{enumerate}
\end{proposition}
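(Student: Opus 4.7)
The plan is to dispatch the four items in order, since the later ones build on the earlier. Items (1) and (3) are essentially trace identities with no group-theoretic content, item (2) is where the finiteness of $G$ enters, and item (4) is then a formal consequence of (2).

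For (1), I would just note $\rho(1)$ is the identity of $GL(V)$, so $\chi(1)=\operatorname{tr}(I_V)=\dim V=n$. For (3), since $\rho$ is a homomorphism we have $\rho(tst^{-1})=\rho(t)\rho(s)\rho(t)^{-1}$, and conjugate matrices have the same trace (one of the preceding exercises), so $\chi(tst^{-1})=\chi(s)$.

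The main (small) obstacle is (2). Here I would use Corollary~\ref{diagonal}: for a fixed $t\in G$, there exists a basis of $V$ in which $\rho(t)$ is diagonal, say $\rho(t)=\operatorname{diag}(\lambda_1,\ldots,\lambda_n)$. Because $G$ is finite, $t$ has finite order $d$, so each $\lambda_i$ satisfies $\lambda_i^d=1$; in particular $\lambda_i$ lies on the unit circle and $\lambda_i^{-1}=\overline{\lambda_i}$. In the same basis $\rho(t^{-1})=\rho(t)^{-1}=\operatorname{diag}(\lambda_1^{-1},\ldots,\lambda_n^{-1})$, so
$$\chi(t^{-1})=\sum_{i=1}^n \lambda_i^{-1}=\sum_{i=1}^n \overline{\lambda_i}=\overline{\sum_{i=1}^n \lambda_i}=\overline{\chi(t)}.$$

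Finally, for (4), I would just rewrite the definition of $(f,\chi)$ and apply (2):
$$(f,\chi)=\frac{1}{|G|}\sum_{t\in G}f(t)\chi(t^{-1})=\frac{1}{|G|}\sum_{t\in G}f(t)\overline{\chi(t)}=\langle f,\chi\rangle,$$
which completes the plan. The only subtlety worth flagging to the reader is that (2) depends crucially on $G$ being finite (so that $\rho(t)$ has eigenvalues on $\mathbb S_1$); without finiteness, $\chi(t^{-1})$ need not equal $\overline{\chi(t)}$, and consequently the pairings $(\cdot,\cdot)$ and $\langle\cdot,\cdot\rangle$ would not agree on characters.
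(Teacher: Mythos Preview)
Your proposal is correct and follows essentially the same approach as the paper. The paper only writes out part (2) explicitly, using Corollary~\ref{diagonal} to diagonalise $\rho(t)$ and observing that the eigenvalues are roots of unity so that $\omega_j^{-1}=\overline{\omega_j}$, exactly as you do; parts (1), (3), (4) are left implicit there, and your arguments for them are the intended ones.
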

\begin{proof}
For the proof of part two, we use Corollary~\ref{diagonal} to calculate $\chi(t)$. From that corollary every $\rho(t)$ 
 can be diagonalised (at a time not simultaneously which is enough for our purposes), say $\diag\{\omega_1,\cdots,\omega_n\}$.
Since $t$ is of finite order, say $d$, we have $\rho(t)^d=1$. That is each $\omega_j^d=1$ means the diagonal elements
are $d$th root of unity. We know that roots of unity satisfy $\omega_j^{-1}=\overline{\omega_j}$. Hence 
$\chi(t^{-1})=tr(\rho(t)^{-1})=\omega_1^{-1}+\cdots+\omega_n^{-1}= \overline{\omega_1}+\cdots+\overline{\omega_n}=\overline{\chi(t)}$.

We can also prove this result by the upper triangulation theorem, i.e., every matrix $\rho(t)$ can be 
conjugated to an upper triangular matrix. Combining this with the fact that the trace is the same for the 
conjugate matrices, gives the result.
\end{proof}

{\color{teal}
\begin{definition}[Class Function]
A function $f \colon G\rightarrow \mathbb C$ is called a {\bf class function} if $f$ is constant on 
the conjugacy classes of $G$. We denote the set of class functions on $G$ by $\mathcal H$. 
\end{definition}}
\begin{exercise}

With the notation as above,
\begin{enumerate}
\item Prove that $\mathcal H$ is a subspace of  $\mathbb C[G]$.
\item The dimension of $\mathcal H$ is the number of conjugacy classes of $G$.
\item Let $c_g=\sum_{x\in G} xgx^{-1} \in \mathbb C[G]$. The centre of the group algebra $\mathbb C[G]$ is spanned by $c_g$.
\item Let $\chi$ be a character corresponding to some representation of $G$. Then, $\chi\in \mathcal H$.
\end{enumerate}
\end{exercise}

\begin{proposition}
Let $(\rho,V)$ and $(\rho',V')$ be two representations of the group $G$ and $\chi, \chi'$ be the corresponding characters. Then,
\begin{enumerate}
\item The character of the sum of two representations is equal to the sum of characters, i.e., $\chi_{\rho\oplus \rho'}=\chi+\chi'$.
\item The character of the tensor product of two representations is the product of two characters, i.e., $\chi_{\rho\otimes\rho'}=\chi\chi'$.
\end{enumerate}
\end{proposition}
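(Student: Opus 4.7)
The plan is to reduce both statements to purely linear-algebraic facts about traces of block-diagonal matrices and Kronecker products, since the representation structure plays no role once a basis is fixed for each of $V$ and $V'$.

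For part (1), I would fix a basis $\{e_1,\ldots,e_n\}$ of $V$ and $\{e_1',\ldots,e_m'\}$ of $V'$, and take the concatenated basis for $V\oplus V'$. With respect to this basis, the definition of $\rho\oplus\rho'$ given in the chapter on constructing new representations shows that the matrix of $(\rho\oplus\rho')(g)$ is the block-diagonal matrix
$$\left[\begin{matrix}\rho(g)&0\\ 0&\rho'(g)\end{matrix}\right].$$
Taking traces, $\chi_{\rho\oplus\rho'}(g)=tr(\rho(g))+tr(\rho'(g))=\chi(g)+\chi'(g)$, which is the desired equality.

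For part (2), I would again fix bases and use the basis $\{e_i\otimes e_j'\}$ of $V\otimes V'$ as described in Section~\ref{tensor}. If $\rho(g)$ has matrix $(a_{ij}(g))$ and $\rho'(g)$ has matrix $(b_{kl}(g))$, then by the definition of $\rho\otimes\rho'$ we compute
$$(\rho\otimes\rho')(g)(e_i\otimes e_k')=\rho(g)(e_i)\otimes\rho'(g)(e_k')=\sum_{j,l}a_{ji}(g)b_{lk}(g)\,e_j\otimes e_l'.$$
Hence the matrix of $(\rho\otimes\rho')(g)$ in the ordered basis $\{e_i\otimes e_k'\}$ has $(jl,ik)$-entry equal to $a_{ji}(g)b_{lk}(g)$, and in particular the diagonal entries are $a_{ii}(g)b_{kk}(g)$. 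Summing,
$$\chi_{\rho\otimes\rho'}(g)=\sum_{i,k}a_{ii}(g)b_{kk}(g)=\Bigl(\sum_i a_{ii}(g)\Bigr)\Bigl(\sum_k b_{kk}(g)\Bigr)=\chi(g)\chi'(g).$$

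There is no real obstacle here; the only mildly delicate point is being careful with the indexing in the tensor product calculation so as to correctly identify the diagonal entries of the Kronecker-product matrix. Both statements hold pointwise at each $g\in G$, so no averaging or use of Schur/Maschke is needed—this is purely bookkeeping about traces once bases are chosen, and the invariance of trace under change of basis (already noted in an earlier exercise) ensures the answer is basis-independent.
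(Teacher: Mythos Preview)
Your proof is correct and is exactly the ``simple exercise involving matrices'' that the paper alludes to in lieu of a written-out argument. You have supplied precisely the details the paper leaves implicit, so there is nothing to add.
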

\begin{proof}
The proof is a simple exercise involving matrices.
\end{proof}
This way we can define the sum and product of characters which is again a character.
\chapter{Orthogonality Relations}
 Let $G$ be a finite group. Let $W_1, W_2, \ldots, W_h, \ldots$ be irreducible representations of $G$ of dimension $n_1, n_2, \ldots, n_h,\ldots$
 over $\mathbb C$. Let $\chi_1, \chi_2,\ldots,\chi_h,\ldots$ are corresponding characters, called {\bf irreducible characters of $G$}.
 We will fix this notation from now onwards. We will prove that the number of irreducible characters and hence the number of
 irreducible representations are finite and equal to the number of conjugacy classes.
 
In the last chapter, we introduced an inner product $\langle,\rangle$ on $\mathbb C[G]$. We also observed that character of
any representation belongs to $\mathcal H$, the space of class functions. 
{\color{red}
\begin{theorem}\label{charorthonormal} 
The set of irreducible characters $\{\chi_1,\chi_2,\ldots\}$ form an orthonormal set of $(\mathbb C[G],\langle,\rangle)$. 
That is, 
\begin{enumerate}
\item If $\chi$ is a character of an irreducible representation then $\langle \chi,\chi\rangle = 1$.
\item If $\chi$ and $\chi'$ are two irreducible characters of non-isomorphic representations then $\langle \chi,\chi'\rangle =0$.
\end{enumerate}
\end{theorem}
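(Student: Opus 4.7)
The plan is to reduce both statements to Theorem~\ref{matrixelements} on orthogonality of matrix coefficients. The preliminary step is to convert the sesquilinear form $\langle , \rangle$ into the bilinear pairing $(,)$ on characters. By Proposition~\ref{character}(2), $\chi'(t^{-1}) = \overline{\chi'(t)}$, hence
\[
\langle \chi,\chi'\rangle \;=\; \frac{1}{|G|}\sum_{t\in G}\chi(t)\,\overline{\chi'(t)} \;=\; \frac{1}{|G|}\sum_{t\in G}\chi(t)\chi'(t^{-1}) \;=\; (\chi,\chi').
\]
(This is the content of Proposition~\ref{character}(4).) So it suffices to compute $(\chi,\chi')$.

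Now fix bases of the underlying representation spaces and let $a_{ij}$, $b_{ij}$ denote the matrix elements of $\rho$ and $\rho'$ respectively, so $\chi(t) = \sum_i a_{ii}(t)$ and $\chi'(t) = \sum_j b_{jj}(t)$. Bilinearity of $(,)$ yields
\[
(\chi,\chi') \;=\; \sum_{i,j}(a_{ii},b_{jj}).
\]
For part (2), the irreducible representations are non-isomorphic, so Theorem~\ref{matrixelements}(1) makes every summand vanish, giving $\langle\chi,\chi'\rangle = 0$. For part (1), I take $\rho = \rho'$ with dimension $n$, so $b_{ij} = a_{ij}$, and Theorem~\ref{matrixelements}(2) with $l = i$, $m = j$ gives $(a_{ii},a_{jj}) = \tfrac{1}{n}\delta_{ij}\delta_{ij} = \tfrac{1}{n}\delta_{ij}$; the double sum then collapses to $n\cdot \tfrac{1}{n} = 1$.

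The serious work has already been done in Theorem~\ref{matrixelements} (which is an application of Schur's Lemma to the averaged map $T^{0}$), so there is essentially no obstacle remaining. The one subtlety I would emphasize is the identity $\chi'(t^{-1})=\overline{\chi'(t)}$: it is what allows the passage from $\langle , \rangle$ to $(,)$, and it depends on the eigenvalues of each $\rho'(t)$ being roots of unity, which in turn uses Corollary~\ref{diagonal} together with the finite order of $t$. Without this point, one could not invoke Theorem~\ref{matrixelements} as stated (which is phrased for the bilinear pairing), and the rest of the argument would not go through.
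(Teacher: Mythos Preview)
Your proof is correct and follows essentially the same route as the paper: convert $\langle\chi,\chi'\rangle$ to $(\chi,\chi')$ via Proposition~\ref{character}(4), expand each character as the trace $\sum_i a_{ii}$, and apply the matrix-coefficient orthogonality of Theorem~\ref{matrixelements}. If anything, you spell out the passage from $\langle,\rangle$ to $(,)$ and the index bookkeeping in Theorem~\ref{matrixelements}(2) more carefully than the paper does.
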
}
\begin{proof}
Let $\rho$ and $\rho'$ be non-isomorphic irreducible representations and $(a_{ij})$ and $(b_{ij})$ be the corresponding 
matrix elements. Then $\chi(g)=\sum_{i}a_{ii}(g)$ and $\chi'(g)=\sum_{j}b_{jj}(g)$.
Then $\langle \chi,\chi'\rangle = (\chi,\chi') = (\sum_{i}a_{ii}, \sum_{j}b_{jj}) = \sum_{i,j}(a_{ii},b_{jj}) = 0$
from Theorem~\ref{matrixelements} and Proposition~\ref{character} part 4.
Using the similar argument we get $\langle \chi,\chi\rangle = (\chi,\chi) = (\sum_{i}a_{ii}, \sum_{j}b_{jj}) = 
\sum_{i} (a_{ii},b_{ii}) = \sum_{i=1}^{n}\frac{1}{n}=1$ where $n$ is the dimension of the representation $\rho$.
\end{proof}
\begin{corollary}
\begin{enumerate}
 \item The set of irreducible characters form a linearly independent subset of $\mathbb C[G]$.
 \item The number of irreducible characters is finite.
 \end{enumerate}
\end{corollary}
\begin{proof}
Since the irreducible characters form an orthonormal set they are linearly independent. Hence their number has to be less than
the dimension of $\mathbb C[G]$ which is $|G|$, hence finite.
\end{proof}
\noindent Once we prove that two representations are isomorphic if and only if their characters are 
the same this corollary will also give that there are finitely many non-isomorphic irreducible 
representations. We prove this for the irreducible representations first. 

\begin{proposition}
Let $(\rho, V)$ and $(\rho', V')$ be two irreducible representations with characters $\chi$ and 
$\chi'$ respectively. Then, $(\rho, V)$ and $(\rho', V')$ are equivalent if and only if 
$\chi=\chi'$.
\end{proposition}
\begin{proof} Let $\chi=\chi'$ and we need to show $V$ and $V'$ are $G$-equivalent. On 
the contrary, suppose $V$ and $V'$ are not equivalent. Let $a_{ij}$ and $b_{ij}$ be the corresponding 
matrix elements of $\rho$ and $\rho'$ respectively. Then, $\chi=\sum_{i} a_{ii}$ and 
$\chi'=\sum_{j} 
b_{jj}$. Now, from Theorem~\ref{matrixelements} $\langle \chi, \chi' \rangle= \langle \sum a_{ii}, 
\sum b_{jj} \rangle = \sum_{i, j} \langle a_{ii}, b_{jj} \rangle =0 $. However, $\langle \chi, 
\chi' \rangle = \langle \chi, \chi \rangle = 1$, a contradiction. Thus, $V$ and $V'$ must be 
$G$-equivalent. 
\end{proof}

We are going to use the above results to analyse the general representation of $G$ and identify its irreducible components.
{\color{red}
\begin{theorem} \label{irrcomponents}
Let $(\rho,V)$ be a representation of $G$ with character $\chi$. 
Let $V$ decompose into a direct sum of irreducible representations:
$$
V=V_1\oplus\cdots \oplus V_m.
$$ 
Then the number of $V_i$ isomorphic to $W_j$ (a fixed irreducible representation) is equal to the scalar product 
$\langle \chi,\chi_j\rangle$.
\end{theorem}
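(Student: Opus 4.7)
The plan is to combine the two tools we have just built: additivity of characters under direct sums, and the orthonormality of irreducible characters (Theorem~\ref{charorthonormal}).

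First, by the proposition on characters of direct sums, the character of $V = V_1 \oplus \cdots \oplus V_m$ is
$$\chi = \chi_{V_1} + \chi_{V_2} + \cdots + \chi_{V_m}.$$
Each summand $V_i$ is irreducible, so $V_i$ is isomorphic to some $W_{k_i}$ in our enumerated list of irreducibles, and hence $\chi_{V_i} = \chi_{k_i}$ (equivalent representations have equal characters, which is an easy matrix-trace calculation done in the exercises). Thus
$$\chi = \sum_{i=1}^m \chi_{k_i}.$$

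Next, I would simply take the inner product with $\chi_j$ and use bilinearity (conjugate-linear in the second slot, but $\chi_j$ is in the second slot of a fixed character, so this is fine):
$$\langle \chi, \chi_j \rangle = \sum_{i=1}^m \langle \chi_{k_i}, \chi_j \rangle.$$
By Theorem~\ref{charorthonormal}, each term $\langle \chi_{k_i}, \chi_j \rangle$ equals $1$ when $k_i = j$ (i.e.\ exactly when $V_i \cong W_j$) and equals $0$ otherwise. Therefore the sum on the right counts precisely the number of indices $i$ for which $V_i \cong W_j$, which is the claim.

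There is no real obstacle here; the substantive work has already been done in proving Theorem~\ref{matrixelements} and Theorem~\ref{charorthonormal}. The only mildly delicate point worth flagging is that the theorem's statement implicitly presumes the multiplicity of $W_j$ is well defined, i.e.\ independent of the particular decomposition $V = V_1 \oplus \cdots \oplus V_m$ chosen; but the formula $\langle \chi, \chi_j\rangle$ makes no reference to the decomposition, so the proof itself shows that this multiplicity is an invariant of $V$ (depending only on its character). This gives, as a corollary, that two representations with the same character have the same multiplicities of each irreducible and hence are isomorphic.
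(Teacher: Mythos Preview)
Your proof is correct and follows essentially the same route as the paper: write $\chi$ as the sum of the characters of the $V_i$, take $\langle\,\cdot\,,\chi_j\rangle$, and invoke orthonormality of irreducible characters to pick out the count. Your closing remark about invariance of the multiplicity is exactly what the paper records immediately afterward as a corollary.
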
}
\begin{proof}
Let $\phi_1,\ldots,\phi_m$ be the characters of $V_1,\ldots,V_m$.
Then $\chi=\phi_1+\cdots+\phi_m$.
Also $\langle \chi,\chi_j\rangle = \sum_{i}\langle \phi_i,\chi_j\rangle = \sum_{\phi_i=\chi_j} \langle \phi_i,\chi_j\rangle =$ the number
of $V_i$ isomorphic to $W_j$.
\end{proof}
\begin{corollary}
With the notation as above,
\begin{enumerate}
\item The number of $V_i$ isomorphic to a fixed $W_j$ does not depend on the chosen decomposition.
\item Let $(\rho, V)$ and $(\rho', V')$ be two representations with characters $\chi$ and $\chi'$ respectively. 
Then $V\cong V'$ if and only if $\chi=\chi'$. 
\end{enumerate}
\end{corollary}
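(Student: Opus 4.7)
The plan is to derive both statements directly from Theorem~\ref{irrcomponents}, using only the fact that the multiplicity formula $\langle \chi,\chi_j\rangle$ depends solely on the character $\chi$ of $V$ (and on the fixed irreducible $W_j$), not on any particular decomposition.

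For part~(1), I would observe that if $V=V_1\oplus\cdots\oplus V_m$ is any decomposition of $V$ into irreducible subrepresentations, then Theorem~\ref{irrcomponents} says the number of $V_i$ isomorphic to $W_j$ equals $\langle \chi,\chi_j\rangle$. The right-hand side is an intrinsic invariant of $V$ (it is computed from the trace of $\rho$, which is basis-independent and decomposition-independent), so the count on the left must be independent of the chosen decomposition. This is essentially a one-line consequence.

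For part~(2), the forward direction is already recorded as an exercise: if $V\cong V'$ via a $G$-equivalence $T$, then for each $g$, $\rho'(g)=T\rho(g)T^{-1}$, so $\chi'(g)=\mathrm{tr}(\rho'(g))=\mathrm{tr}(\rho(g))=\chi(g)$. For the converse, suppose $\chi=\chi'$. By Maschke's theorem and complete reducibility (applicable since we are over $\mathbb{C}$ and $G$ is finite), write
\[
V\cong\bigoplus_{j} W_j^{\oplus m_j},\qquad V'\cong\bigoplus_{j} W_j^{\oplus m_j'},
\]
where the sum runs over (finitely many, by the corollary to Theorem~\ref{charorthonormal}) non-isomorphic irreducible representations $W_j$ with characters $\chi_j$. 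By Theorem~\ref{irrcomponents}, $m_j=\langle\chi,\chi_j\rangle$ and $m_j'=\langle\chi',\chi_j\rangle$. Since $\chi=\chi'$, these inner products agree, so $m_j=m_j'$ for every $j$, and the two direct sum decompositions are isomorphic as $G$-representations (take the direct sum of the obvious isomorphisms on each isotypic piece). Hence $V\cong V'$.

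There is no real obstacle here: both parts are essentially immediate corollaries of Theorem~\ref{irrcomponents}, and the only ingredient beyond it is Maschke's theorem to guarantee that $V$ and $V'$ actually admit decompositions into irreducibles. The mildly delicate point is recognising that the forward direction of~(2) is not quite tautological and needs the trace-is-conjugation-invariant observation, but this has already been flagged as an exercise earlier in the text.
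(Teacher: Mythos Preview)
Your proposal is correct and follows essentially the same approach as the paper. The only cosmetic difference is in part~(2): the paper writes $\chi=\sum n_i\chi_i$ and $\chi'=\sum m_i\chi_i$ and invokes linear independence of the orthonormal set $\{\chi_i\}$ to conclude $n_i=m_i$, whereas you invoke Theorem~\ref{irrcomponents} directly to compute $m_j=\langle\chi,\chi_j\rangle=\langle\chi',\chi_j\rangle=m_j'$; these are the same argument phrased two ways.
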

\begin{proof}
The proof of part 1 is clear from the theorem above.
For the proof of part 2, it is clear that if $V\cong V'$ we get $\chi=\chi'$.
Now suppose $\chi=\chi'$. Let $V\cong W_1^{n_1}\oplus\cdots\oplus W_h^{n_h}$ and $V'\cong W_1^{m_1}\oplus\cdots\oplus W_h^{m_h}$
be the decomposition as a direct sum of irreducible representations (we can do this using Maschke's Theorem) where $n_i,m_j\geq 0$.
Suppose $\chi_1,\chi_2,\ldots,\chi_h$ be the irreducible characters of $W_1,\ldots,W_h$.
Then $\chi=n_1\chi_1+\cdots+n_h\chi_h$ and $\chi'=m_1\chi_1+\cdots+m_h\chi_h$. However as $\chi_i$'s form an orthonormal set
they are linearly independent. Hence $\chi=\chi'$ implies $n_i=m_i$ for all $i$. Hence $V\cong V'$. 
\end{proof}
From this corollary, it follows that the number of irreducible representations is the same as the number of irreducible characters
which is less than or equal to $|G|$. In fact, later we will prove that this number is equal to the number of conjugacy classes.
The above analysis also helps to identify whether a representation is irreducible by use of the following:
{\color{red}
\begin{theorem}[Irreducibility Criteria]
Let $\chi$ be the character of a representation $(\rho,V)$.
Then $\langle \chi,\chi\rangle$ is a positive integer and  $\langle \chi,\chi\rangle =1$ if and only if $V$ is irreducible.
\end{theorem}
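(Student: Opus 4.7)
The plan is to reduce everything to Maschke's theorem plus the orthonormality relations already established in Theorem~\ref{charorthonormal}. By Maschke's theorem (applied in its Complete Reducibility form), we can decompose $V$ as a direct sum of irreducible subrepresentations, and grouping isomorphic summands together we may write
\[
V \;\cong\; W_1^{n_1}\oplus W_2^{n_2}\oplus\cdots\oplus W_h^{n_h},
\]
where $W_1,\ldots,W_h$ are the (finitely many) pairwise non-isomorphic irreducible representations of $G$ and the multiplicities $n_i$ are non-negative integers. Taking characters and using that the character of a direct sum is the sum of characters (Proposition from the previous chapter) gives
\[
\chi \;=\; n_1\chi_1 + n_2\chi_2 + \cdots + n_h\chi_h .
\]

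Next I would compute $\langle \chi,\chi\rangle$ by bilinear expansion and invoke Theorem~\ref{charorthonormal}, which says $\langle \chi_i,\chi_j\rangle = \delta_{ij}$. This immediately yields
\[
\langle \chi,\chi\rangle \;=\; \sum_{i,j} n_i n_j\langle \chi_i,\chi_j\rangle \;=\; \sum_{i=1}^{h} n_i^{\,2}.
\]
Since each $n_i$ is a non-negative integer and $V\neq 0$ forces at least one $n_i\geq 1$, this is a positive integer, establishing the first claim.

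For the ``if and only if'' part, observe that a sum of squares of non-negative integers equals $1$ precisely when exactly one $n_i$ equals $1$ and all the others are $0$, which is exactly the condition that $V \cong W_i$ for some $i$, i.e.\ that $V$ is irreducible. Conversely, if $V$ is irreducible then $\chi$ itself is one of the $\chi_i$ and $\langle \chi,\chi\rangle = 1$ directly from Theorem~\ref{charorthonormal}.

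There is no real obstacle here; the proof is essentially a bookkeeping exercise once Maschke's theorem and the orthonormality of irreducible characters are in hand. The only subtlety worth flagging is the implicit assumption $V\neq 0$ (otherwise $\chi=0$ and $\langle\chi,\chi\rangle=0$, not a positive integer), and the fact that the decomposition above is well-defined on characters even though the direct sum decomposition is not unique as a subspace decomposition --- this is guaranteed by the linear independence of the $\chi_i$, which again comes from their orthonormality.
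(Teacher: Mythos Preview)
Your proof is correct and follows essentially the same route as the paper's: decompose $V$ via Maschke's theorem as $W_1^{n_1}\oplus\cdots\oplus W_h^{n_h}$, write $\chi = \sum n_i\chi_i$, expand $\langle\chi,\chi\rangle$ using the orthonormality of the $\chi_i$ to obtain $\sum n_i^2$, and read off the conclusion. Your version is slightly more thorough in flagging the implicit assumption $V\neq 0$ and in spelling out both directions of the equivalence, but there is no substantive difference in method.
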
}
\begin{proof}
Let $V\cong W_1^{n_1}\oplus\cdots\oplus W_h^{n_h}$. Then $\chi=n_1\chi_1+\cdots+n_h\chi_h$ and 
$$
\langle\chi,\chi\rangle = \langle n_1\chi_1+\cdots+n_h\chi_h, n_1\chi_1+\cdots+n_h\chi_h \rangle = \sum_i n_i^2.
$$
Hence $\langle \chi,\chi\rangle =1$ if and only if one of the $n_i=1$, i.e, $\chi=\chi_i$ for some $i$.
Hence the result.
\end{proof}
\begin{exercise}
Let $\chi$ be an irreducible character. Show that $\overline{\chi}$ is so. 
Hence a representation $\rho$ is irreducible if and only if $\rho^*$ is so.
\end{exercise}
\begin{exercise}
Use the above criteria to show that the ``Permutations representation'', and ``Regular Representation'' (defined in the second chapter) are not irreducible if $|G|>1$.
\end{exercise}
\begin{exercise}
Let $\rho$ be an irreducible representation and $\tau$ be a $1$-dimensional representation. Show that $\rho\otimes \tau$ is irreducible.
\end{exercise}

\chapter{Main Theorem of Character Theory}
Now we deal with the main question of how many irreducible representations are there for a given 
group $G$.

\section{Regular Representation}
Let $G$ be a finite group and $\chi_1,\ldots,\chi_h$ be the irreducible characters of dimension $n_1,\ldots,n_h$ respectively. 
Let $L$ be the left regular representation of $G$ with the corresponding character $l$.
\begin{exercise}
The character $l$ of the regular representation is given by $l(1)=|G|$ and $l(t)=0$ for all $1\neq t\in G$.
\end{exercise}
{\color{red}
\begin{theorem}
Every irreducible representation $W_i$ of $G$ is contained in the regular representation 
with multiplicity equal to the dimension of $W_i$ (which is denoted by $n_i$).
Hence, $l=n_1\chi_1+\cdots+n_h\chi_h$.
\end{theorem}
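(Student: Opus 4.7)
The plan is to apply Theorem~\ref{irrcomponents} with $(\rho, V)$ being the left regular representation $(L, \mathbb{C}[G])$ and its character $\chi = l$. By that theorem, the multiplicity with which the irreducible representation $W_i$ appears in the decomposition of $\mathbb{C}[G]$ into irreducibles equals the inner product $\langle l, \chi_i\rangle$. So the whole theorem reduces to computing this inner product and showing it equals $n_i$.

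First I would establish (or cite the exercise immediately preceding the statement) the explicit formula for the character of the regular representation: $l(1) = |G|$ and $l(t) = 0$ for every $t \neq 1$. The reason is that $L(t)$ permutes the basis $\{g : g \in G\}$ of $\mathbb{C}[G]$ without fixed points when $t \neq 1$ (since $tg = g$ forces $t = 1$), so all diagonal entries of the permutation matrix vanish; for $t = 1$ the matrix is the identity on an $|G|$-dimensional space.

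Next, using this formula, the computation is immediate:
\begin{equation*}
\langle l, \chi_i\rangle \;=\; \frac{1}{|G|}\sum_{t\in G} l(t)\,\overline{\chi_i(t)} \;=\; \frac{1}{|G|}\,l(1)\,\overline{\chi_i(1)} \;=\; \frac{1}{|G|} \cdot |G| \cdot n_i \;=\; n_i,
\end{equation*}
where I used $\chi_i(1) = n_i$ from Proposition~\ref{character}(1), which is a real number so conjugation has no effect. By Theorem~\ref{irrcomponents} this proves that $W_i$ occurs in $\mathbb{C}[G]$ with multiplicity exactly $n_i$.

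Finally, since every irreducible $W_i$ appears in $\mathbb{C}[G]$ (with positive multiplicity $n_i \geq 1$) and by the complete reducibility guaranteed by Maschke's theorem we have $\mathbb{C}[G] \cong \bigoplus_i W_i^{n_i}$, passing to characters yields $l = \sum_{i=1}^{h} n_i \chi_i$, as claimed. There is no real obstacle here: the only subtle point is making sure the character values of $L$ are computed correctly, and this is straightforward from the permutation-basis description of the regular representation.
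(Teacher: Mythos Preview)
Your proof is correct and follows essentially the same approach as the paper: apply Theorem~\ref{irrcomponents} to the regular representation, use the explicit formula for $l$ from the preceding exercise, and compute $\langle l,\chi_i\rangle = \frac{1}{|G|}\,l(1)\,\overline{\chi_i(1)} = n_i$. You add some extra justification (why $l(t)=0$ for $t\neq 1$, why the conjugation is harmless), but the argument is the same.
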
}
\begin{proof}
In the view of Theorem~\ref{irrcomponents} the number of times $\chi_i$ is contained in the regular 
representation is given by $\langle l,\chi_i\rangle = 
\frac{1}{|G|}l(1)\chi_i(1)=\frac{1}{|G|}|G|n_i=n_i$. This proves the required result.
\end{proof}
If we are asked to construct irreducible representations of a finite group we don't know where to 
look for them. This theorem ensures a natural place, namely the regular representation, where we 
can find all of them. For this reason, one may also call it a ``God Given Representation''. Some 
further properties are listed here as an exercise.

\begin{exercise}
With the notation as above,
\begin{enumerate}
\item The degree $n_i$ satisfy $\sum_{i=1}^{h}n_i^2=|G|$. 
\item If $1\neq s\in G$ we have $\sum_{i=1}^{h}n_i\chi_i(s)=0$. 
\end{enumerate}
\end{exercise}
Hints: This follows from the formula for $l$ as in the theorem by evaluating $s=1$ and $s\neq 1$. 

These relations among characters will be useful to determine the character table of the group $G$.

\section{The Number of Irreducible Representations}

Now we will prove the main theorem of the character theory.
{\color{red}
\begin{theorem}[Main Theorem]
The number of irreducible representations of $G$ (up to isomorphism) is equal to the number of conjugacy classes of $G$.
\end{theorem}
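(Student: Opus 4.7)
The plan is to establish that the irreducible characters $\chi_1,\ldots,\chi_h$ form an orthonormal \emph{basis} of the space $\mathcal H$ of class functions on $G$. Since $\dim \mathcal H$ equals the number of conjugacy classes (from the earlier exercise) and representations are determined up to isomorphism by their characters (from the corollary to Theorem~\ref{irrcomponents}), this will prove the theorem. By Theorem~\ref{charorthonormal} the $\chi_i$ are already orthonormal, and by Proposition~\ref{character}(3) they lie in $\mathcal H$; so they are linearly independent in $\mathcal H$, which gives that the number of irreducible representations is \emph{at most} the number of conjugacy classes. The content of the theorem is the reverse inequality: the $\chi_i$ span $\mathcal H$.

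To prove spanning, I would take $f\in\mathcal H$ with $\langle f,\chi_i\rangle = 0$ for every $i$ and show $f = 0$. The key gadget is an averaging operator: for any representation $(\rho,V)$ of $G$, define
\[
\rho_f \;=\; \sum_{t\in G} f(t)\,\rho(t) \;\in\; \End(V).
\]
The crucial property, using that $f$ is a class function, is that $\rho_f$ commutes with $\rho(s)$ for every $s\in G$: the substitution $t\mapsto sts^{-1}$ in the defining sum, together with $f(sts^{-1})=f(t)$, yields $\rho(s)\rho_f\rho(s)^{-1}=\rho_f$. Hence $\rho_f$ is a $G$-map.

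Now specialize to an irreducible $(\rho,V)$ of dimension $n$ with character $\chi$. By Corollary~\ref{corschur} we must have $\rho_f=\lambda\,\mathrm{Id}$, and taking traces yields $n\lambda = \sum_t f(t)\chi(t) = |G|\,\langle f,\overline{\chi}\rangle$. But $\overline{\chi}$ is the character of the dual representation $\rho^*$, which is again irreducible, so $\overline{\chi}$ equals some $\chi_j$; by hypothesis $\langle f,\chi_j\rangle=0$, whence $\lambda=0$ and $\rho_f=0$ on every irreducible representation. By Maschke's theorem every finite-dimensional representation decomposes into irreducibles, so $\rho_f=0$ for \emph{all} representations $\rho$. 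Apply this to the left regular representation $L$ on $\mathbb C[G]$ using the basis $\{e_t\}_{t\in G}$: since $L(t)(e_1)=e_t$,
\[
L_f(e_1) \;=\; \sum_{t\in G} f(t)\,L(t)(e_1) \;=\; \sum_{t\in G} f(t)\,e_t,
\]
which, under the identification of $f$ with its coefficient vector, is $f$ itself inside $\mathbb C[G]$. Thus $f=0$.

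The main obstacle is the clever averaging construction $\rho_f$: turning the hypothesis ``$f$ is a class function'' into an operator amenable to Schur's lemma, and then recognizing that the regular representation is rich enough to detect any nonzero element of $\mathbb C[G]$. A secondary subtlety is making sure the trace computation lands on an inner product with an irreducible character; this is handled by invoking closure of the set of irreducible characters under complex conjugation (via the dual representation). Once these two pieces are in place, the proof is essentially mechanical.
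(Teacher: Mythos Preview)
Your proposal is correct and essentially identical to the paper's proof: the paper also reduces the theorem to Proposition~\ref{hbasis} (that the irreducible characters form an orthonormal basis of $\mathcal H$), proves a lemma that $\rho_f=\frac{|G|}{n}\langle f,\overline{\chi}\rangle\cdot\mathrm{Id}$ for irreducible $\rho$ via Schur's Lemma, and then takes $f\in\mathcal H$ orthogonal to all $\chi_i$, observes this forces $\langle f,\overline{\chi_i}\rangle=0$ (exactly your dual-representation argument), deduces $\rho_f=0$ on every representation by Maschke, and evaluates $L_f(e_1)=\sum_t f(t)e_t$ in the regular representation to get $f=0$. Your write-up matches the paper's argument step for step.
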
}

The proof of this theorem will follow from the following proposition.
We know that the irreducible characters $\chi_1,\ldots,\chi_h\in \mathcal H$ and form an orthonormal 
set (see Theorem~\ref{charorthonormal}). We prove that, in fact, they form an orthonormal basis of $\mathcal H$ and generate
as an algebra whole of $\mathbb C[G]$.
\begin{proposition}\label{hbasis}
The irreducible characters of $G$ form an orthonormal basis of $\mathcal H$, the space of class functions.
\end{proposition}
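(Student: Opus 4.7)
By Theorem~\ref{charorthonormal} and Proposition~\ref{character}(3), the irreducible characters $\chi_1,\ldots,\chi_h$ already lie in $\mathcal H$ and form an orthonormal set there. What remains is to prove they span $\mathcal H$; equivalently, I will show that the orthogonal complement of $\{\chi_1,\ldots,\chi_h\}$ inside $\mathcal H$ is $\{0\}$. So I fix a class function $f\in\mathcal H$ with $\langle f,\chi_i\rangle=0$ for every $i$ and aim to conclude $f\equiv 0$.

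\textbf{The averaged operator.} For each representation $(\rho,V)$ of $G$, I form
\[
T_f^{\rho} \;=\; \sum_{t\in G} f(t)\,\rho(t^{-1})\;\in\;\End(V).
\]
Because $f$ is a class function, the substitution $u = sts^{-1}$ gives $\rho(s)\,T_f^\rho\,\rho(s)^{-1}=T_f^\rho$, so $T_f^\rho$ is a $G$-map. When $(\rho,V)$ is an irreducible representation with character $\chi_i$ and dimension $n_i$, Corollary~\ref{corschur} forces $T_f^\rho=\lambda\cdot Id$; computing the trace and using $\overline{\chi_i(t)}=\chi_i(t^{-1})$ from Proposition~\ref{character}(2) yields $n_i\lambda=|G|\,\langle f,\chi_i\rangle = 0$. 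Hence $T_f^\rho=0$ on every irreducible representation, and so, by Maschke's theorem, on every representation.

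\textbf{Extracting $f\equiv 0$.} I apply the previous step to the left regular representation $L$ on $\mathbb C[G]$. Evaluating at the basis vector $1\in G$ gives
\[
0 \;=\; T_f^L(1) \;=\; \sum_{t\in G}f(t)\,t^{-1} \;=\; \sum_{u\in G} f(u^{-1})\,u,
\]
and linear independence of the group basis of $\mathbb C[G]$ forces $f(u^{-1})=0$ for every $u\in G$, whence $f\equiv 0$, as required.

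\textbf{Main obstacle.} The only nontrivial design choice is the operator $T_f^\rho$. One must insert $\rho(t^{-1})$ (not $\rho(t)$) and use $f$ (not $\bar f$) as the coefficient, so that the class-function property of $f$ produces the $G$-equivariance while the trace pairing reproduces exactly $|G|\,\langle f,\chi_i\rangle$ on an irreducible component. Once this alignment is correct, Schur's lemma and the orthogonality hypothesis do the work, and invoking the regular representation at the end is what separates points: it is the unique representation in sight that reads off all values of $f$ from a single test vector.
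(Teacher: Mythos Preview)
Your proof is correct and follows essentially the same route as the paper's: build an averaged operator from $f$, use the class-function property to check $G$-equivariance, apply Schur's lemma and a trace computation on irreducibles, extend to all representations via Maschke, and finish by evaluating the regular representation at the identity basis vector. The only cosmetic difference is that the paper defines $\rho_f=\sum_t f(t)\rho(t)$, obtaining the scalar $\frac{|G|}{n}\langle f,\overline{\chi}\rangle$ and then using that $\overline{\chi_i}$ is again an irreducible character to pass from $\langle f,\chi_i\rangle=0$ to $\langle f,\overline{\chi_i}\rangle=0$; your choice of $\rho(t^{-1})$ in $T_f^\rho$ lands directly on $\langle f,\chi_i\rangle$ and so skips that extra step.
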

To prove this we will make use of the following:
\begin{lemma}
Let $f\in\mathcal H$ be a class function on $G$.
Let $(\rho,V)$ be an irreducible representation of $G$ of degree $n$ with character $\chi$. 
Let us define $\rho_f=\sum_{t\in G}f(t)\rho(t) \in \End(V)$. 
Then, $\rho_f=\lambda .Id$ where $\lambda=\frac{|G|}{n}\langle f,\overline{\chi}\rangle$.
\end{lemma}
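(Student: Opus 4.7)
The plan is to show that $\rho_f$ is a $G$-equivariant endomorphism of $V$, invoke the corollary to Schur's lemma to conclude it is a scalar, and then compute that scalar by taking a trace.

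First I would verify that $\rho_f$ commutes with $\rho(g)$ for every $g\in G$, i.e., that $\rho(g)\rho_f\rho(g)^{-1}=\rho_f$. Expanding, $\rho(g)\rho_f\rho(g)^{-1}=\sum_{t\in G}f(t)\rho(gtg^{-1})$. After the change of variables $s=gtg^{-1}$ (a bijection of $G$), this becomes $\sum_{s\in G}f(g^{-1}sg)\rho(s)$, and since $f$ is a class function we have $f(g^{-1}sg)=f(s)$, so the sum equals $\rho_f$. Hence $\rho_f\in\End(V)$ is a $G$-map.

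Since $(\rho,V)$ is irreducible over $\mathbb{C}$, Corollary~\ref{corschur} gives $\rho_f=\lambda\cdot Id$ for some $\lambda\in\mathbb{C}$. To identify $\lambda$, I would take traces on both sides:
\[
n\lambda = \textup{tr}(\rho_f) = \sum_{t\in G}f(t)\,\textup{tr}(\rho(t)) = \sum_{t\in G}f(t)\chi(t).
\]
Finally, recognize the right-hand side as an inner product: by definition of $\langle\cdot,\cdot\rangle$,
\[
\langle f,\overline{\chi}\rangle = \frac{1}{|G|}\sum_{t\in G}f(t)\overline{\overline{\chi(t)}} = \frac{1}{|G|}\sum_{t\in G}f(t)\chi(t),
\]
so $\sum_{t\in G}f(t)\chi(t)=|G|\langle f,\overline{\chi}\rangle$ and therefore $\lambda=\frac{|G|}{n}\langle f,\overline{\chi}\rangle$.

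The only subtle step is the first one, where the class-function hypothesis on $f$ is essential to make $\rho_f$ a $G$-map; without it, the reindexing $s=gtg^{-1}$ would leave an $f(g^{-1}sg)$ that cannot be simplified. Everything else is a mechanical trace calculation combined with the earlier Schur-type corollary, so I expect no further obstacle.
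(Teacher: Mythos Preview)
Your proposal is correct and follows essentially the same route as the paper: show $\rho_f$ is a $G$-map via the reindexing $s=gtg^{-1}$ and the class-function hypothesis, apply Corollary~\ref{corschur} to get $\rho_f=\lambda\cdot Id$, and then compute $\lambda$ by taking traces and rewriting $\sum_t f(t)\chi(t)$ as $|G|\langle f,\overline{\chi}\rangle$. The only cosmetic difference is that the paper passes through $\chi(t)=\overline{\chi(t^{-1})}$ in the final identification, whereas you use $\overline{\overline{\chi(t)}}=\chi(t)$ directly.
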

\begin{proof}
We claim that $\rho_f$ is a $G$-map and use Schur's Lemma to prove the result.
For any $g\in G$ we have,
$$
\rho(g)\rho_f\rho(g^{-1})= \sum_{t\in G} f(t) \rho(g)\rho(t)\rho(g^{-1})
= \sum_{t\in G} f(t) \rho(gtg^{-1}) = \sum_{s\in G} f(g^{-1}sg)\rho(s) =\rho_f.
$$
Hence $\rho_f$ is a $G$-map. From Schur's Lemma (see~\ref{corschur}) we get that $\rho_f=\lambda.Id$ for some $\lambda\in\mathbb C$.
Now we calculate the trace of both sides:
$$
\lambda.n=tr(\rho_f)=\sum_{t\in G}f(t)tr(\rho(t)) = \sum_{t\in G}f(t)\chi(t)=
   |G|\frac{1}{|G|} \sum_{t\in G}f(t)\overline{\overline{\chi(t)}} =|G|\langle f,\overline{\chi} \rangle.
$$
Hence we get $\lambda = \frac{|G|}{n}\langle f, \overline{\chi}\rangle$.
\end{proof}

\begin{proof}[{\bf Proof of the Proposition~\ref{hbasis}}]
We need to prove that irreducible characters span $\mathcal H$ as being orthonormal they are already linearly independent.
Let $f\in \mathcal H$. Suppose $f$ is orthogonal to each irreducible $\chi_i$, i.e., $\langle f,\chi_i\rangle =0$ for all $i$.
Then we will prove $f=0$.

Since $\langle f,\chi_i\rangle =0$ it implies $\langle f,\overline{\chi_i}\rangle =0$ for all $i$.
Let $\rho_i$ be the corresponding irreducible representation. Then from the previous lemma 
${\rho_i}_f=\left(\frac{|G|}{n}\langle f, \overline{\chi_i}\rangle\right).Id=0$ for all $i$.
Now let $\rho$ be any representation of $G$. From Maschke's Theorem, it is a direct sum of $\rho_i$'s, the irreducible ones.
Hence $\rho_f=0$ for any $\rho$.

In particular we can take the regular representation $L \colon G\rightarrow \GL(\mathbb C[G])$ for 
$\rho$ and we get 
$L_f=0$. Hence $L_f(e_{1})=0$ implies $\sum_{t\in G}f(t)L(t)(e_1)=0$, i.e., $\sum_{t\in G}f(t)e_{t}=0$ hence $f(t)=0$ for
all $t\in G$. Hence $f=0$.
\end{proof}

\begin{proposition}
Let $s\in G$ and let $r_s$ be the number of elements in the conjugacy class of $s$.
Let $\{\chi_1,\ldots,\chi_h\}$ be irreducible representations of $G$.
Then,
\begin{enumerate}
\item We have $\sum_{i=1}^{h}\chi_i(s)\overline{\chi_i(s)}=\frac{|G|}{r_s}$.
\item For $t\in G$ not conjugate to $s$, we have $\sum_{i=1}^{h}\chi_i(s)\overline{\chi_i(t)}=0$.
\end{enumerate}
\end{proposition}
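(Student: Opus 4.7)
The plan is to leverage Proposition~\ref{hbasis}, which gives that the irreducible characters $\chi_1,\ldots,\chi_h$ form an orthonormal basis of the space $\mathcal H$ of class functions. Both statements will drop out of expanding a single, very natural class function in this basis and then evaluating at appropriate group elements. This is the classical ``column orthogonality'' counterpart to the ``row orthogonality'' relations of Theorem~\ref{charorthonormal}.

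Concretely, let $C_s$ denote the conjugacy class of $s$, and define the indicator class function $\delta_s \colon G \to \mathbb C$ by $\delta_s(x)=1$ if $x\in C_s$ and $\delta_s(x)=0$ otherwise. Since $\delta_s$ is constant on conjugacy classes, it lies in $\mathcal H$, so by Proposition~\ref{hbasis} we may write
$$\delta_s = \sum_{i=1}^{h} c_i \chi_i, \qquad c_i=\langle \delta_s,\chi_i\rangle.$$
The coefficient $c_i$ is easy to compute directly from the definition of the inner product: since $\delta_s$ is supported on $C_s$, which has $r_s$ elements, and $\chi_i$ is constant on $C_s$ with value $\chi_i(s)$,
$$c_i = \frac{1}{|G|}\sum_{x\in G}\delta_s(x)\overline{\chi_i(x)} = \frac{r_s}{|G|}\,\overline{\chi_i(s)}.$$

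Now I would evaluate the expansion $\delta_s(y) = \frac{r_s}{|G|}\sum_{i=1}^{h}\overline{\chi_i(s)}\,\chi_i(y)$ at two choices of $y$. Taking $y=s$ gives $1 = \frac{r_s}{|G|}\sum_i \overline{\chi_i(s)}\chi_i(s)$, which rearranges to $\sum_i \chi_i(s)\overline{\chi_i(s)} = |G|/r_s$ (using that $\chi_i(s)\overline{\chi_i(s)}=|\chi_i(s)|^2$ is real, so the ordering of the factors is immaterial). Taking $y=t$ with $t$ not conjugate to $s$ gives $0 = \frac{r_s}{|G|}\sum_i \overline{\chi_i(s)}\chi_i(t)$, hence $\sum_i \overline{\chi_i(s)}\chi_i(t)=0$; conjugating the whole equation (and using $r_s\neq 0$) yields the claimed $\sum_i \chi_i(s)\overline{\chi_i(t)} = 0$.

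There is essentially no hard step: the only thing to watch is the placement of the complex conjugates, which swaps under complex conjugation of the identity, so the form in the statement follows with no extra work. If I wanted an alternative, I could instead expand the class function $\delta_s'(x)=\delta_{s^{-1}}(x)$ (or equivalently use Proposition~\ref{character}(2)), which directly produces the conjugation pattern in the statement without the final conjugation step; but the argument above is the most transparent route given what is already in the chapter.
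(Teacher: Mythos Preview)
Your proof is correct and is essentially the same as the paper's: the paper also introduces the indicator class function of a conjugacy class (denoted $f_t$ there), expands it in the orthonormal basis $\{\chi_i\}$ of $\mathcal H$ via Proposition~\ref{hbasis}, computes the coefficients as $\frac{r_t}{|G|}\overline{\chi_i(t)}$, and evaluates at elements conjugate and not conjugate to the chosen representative. The only cosmetic difference is that the paper fixes $t$ and evaluates at $s$, which yields the conjugation pattern of the statement directly without your final conjugation step.
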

\begin{proof}
Let us define a class function $f_t$ by $f_t(t)=1$ and $f_t(g)=0$ if $g$ is not conjugate to $t$. Since irreducible characters
span $\mathcal H$ (see~\ref{hbasis}) we can write $f_t=\sum_{i=1}^{h}\lambda_i\chi_i$ where 
$\lambda_i=\langle f_t, \chi_i\rangle = \frac{1}{|G|}r_t\overline{\chi_i(t)}$. Hence 
$f_t(s) = \frac{r_t}{|G|} \sum_{i=1}^h\overline{\chi_i(t)} \chi_i(s)$ for any $s\in G$. This gives the required result by
taking $s$ conjugate to $t$ and not conjugate to $t$.
\end{proof}

\section{Artin-Wedderburn Decomposition}
There is a more conceptual proof of the main theorem mentioned in the last section. It follows from a general
decomposition theorem on semisimple algebras called Artin-Wedderburn decomposition. We prove this 
in the case $k=\mathbb C$. 
{\color{red}
\begin{theorem}[Artin-Wedderburn]
 Let $G$ be a finite group and $k$ a field such that $char(k)\nmid |G|$. Then 
 $$k[G]\cong M_{n_1}(D_1)\times \ldots M_{n_r}(D_r)$$
 where $D_i$ are division algebras over $k$.
 In the case $k=\mathbb C$, $\mathbb C[G]\cong M_{n_1}(\mathbb C)\times \ldots \times M_{n_r}(\mathbb C)$.
\end{theorem}}
\begin{proof}
 We prove for the case $k=\mathbb C$. We have $\mathbb C[G]\cong W_1^{n_1}\oplus\cdots\oplus W_h^{n_h}$ as
 $\mathbb C[G]$-module. We compute $\Hom_{\mathbb C[G]}(\mathbb C[G], \mathbb C[G])$ in two different ways.
 If we think of $\mathbb C[G]$ as a ring (possibly non-commutative) then $\Hom_{\mathbb C[G]}(\mathbb C[G], \mathbb C[G]) \cong
 \mathbb C[G]^{opp}$. 
On the other hand 
\begin{eqnarray*}
\Hom_{\mathbb C[G]}(\mathbb C[G], \mathbb C[G]) &\cong& \Hom_{\mathbb C[G]}(W_1^{n_1}\oplus\cdots\oplus W_h^{n_h},W_1^{n_1}\oplus\cdots\oplus W_h^{n_h})\\
&\cong& \bigoplus_{i=1}^{h} \Hom_{\mathbb C[G]}(W_i^{n_i}, W_i^{n_i}) \\
&\cong& \bigoplus M_{n_i}(\End_{\mathbb C[G]}(W_i))\\
&\cong& \bigoplus M_{n_i}(\mathbb C).
\end{eqnarray*}
This gives us $\mathbb C[G]^{opp}\cong \oplus_{i=1}^h M_{n_i}(\mathbb C)$.
 \end{proof}
 \begin{exercise}
 Let $R$ be a ring (possibly non-commutative) then $\End_R(R)\cong R^{opp}$, the opposite ring.
\end{exercise}
\begin{exercise}
 Prove the main theorem using the Artin-Wedderburn decomposition by computing the dimension of the centre on
 both sides.
\end{exercise}
\begin{exercise}
 There are no finite dimensional division algebras over $\mathbb C$.
\end{exercise}
\begin{exercise}
 Verify the steps of the proof in the above theorem using Schur's Lemma.
\end{exercise}

\begin{exercise}
 Find the Artin Wedderburn decomposition of a cyclic group $\mathbb Z/n\mathbb Z$ over fields $\mathbb C$, 
 $\mathbb R$ and $\mathbb Q$.
\end{exercise}

\begin{exercise}
 Show that $\mathbb R[Q_8]\cong \mathbb R\times \mathbb R\times \mathbb R\times \mathbb R\times \mathbb H$ 
 where $\mathbb H$ is the real quaternion algebra.
\end{exercise}

\begin{exercise}
 Show that $\mathbb R[D_4]\cong \mathbb R\times \mathbb R\times \mathbb R\times \mathbb R\times M_2(\mathbb R)$.
\end{exercise}

\begin{exercise}
 Show that $\mathbb C[Q_8]\cong \mathbb C[D_4]\cong \mathbb C^4\times M_2(\mathbb C)$.
\end{exercise}

\begin{exercise}
 What is the decomposition of $\mathbb Q[S_3]$?
\end{exercise}

\section{Exercises on Character Table}
Character table can be thought of as a matrix of size $h$, the number of conjugacy classes in $G$, 
with complex entries. We denote here as $X$. 

\begin{exercise}
Show that the character table $X$ of a finite group $G$ is an invertible matrix. 
\end{exercise}

\begin{exercise}
Is it possible to have a zero column in a character table? 
\end{exercise}

\begin{exercise}
Let $G$ be a finite group. The elements $s,t\in G$ are conjugate if and only if $\chi(s)=\chi(t)$ 
for all (irreducible) characters $\chi$ of $G$.
\end{exercise}
{\bf Hint:} Consider the class functions $f_s$ and $f_t$ which take vale $1$ on the conjugacy 
classes corresponding to $s$ and $t$ respectively and $0$ elsewhere. Now use that $\mathcal H$ is 
spanned by irreducible characters.

\begin{exercise}
An element $g\in G$ is called real if $g$ is conjugate to $g^{-1}$. A character $\chi$ is called 
real if it takes all real values.
Prove that the number of real conjugacy classes is the same as the number of real irreducible 
characters.
\end{exercise}
{\bf Hint: } When $\chi$ is irreducible, so is $\bar \chi$. Thus, $\chi\rightarrow 
\overline\chi$ permutes the rows of $X$. Thus, $\overline X = PX$ where $P$ is a permutation 
matrix. Now, if we interchange the columns under $g\rightarrow g^{-1}$ once again we get a 
permutation of $X$, which is in fact $X_{r}=\overline X$ (because $\chi(t^{-1})=\overline 
\chi(t)$). Thus, $X_r=\overline X=XQ$ for some permutation matrix $Q$. Now, $PX=XQ$ gives 
$P=XQX^{-1}$ hence $trace(P)=trace(Q)$. Note that the trace of a permutation matrix is the number of 
fixed points and we are done.

\begin{exercise}
What is the determinant of the character table?
\end{exercise}
{\bf Hint: } Let us begin with computing 
$$\tr X \overline X =  \left( \sum_m \chi_m(g_i) \bar \chi_m(g_j)\right) = 
\diag\left(\frac{|G|}{r_1}, \ldots, \frac{|G|}{r_l}, \ldots, \frac{|G|}{r_h} \right)$$
using column orthogonality relations.
Thus, $\det(X) \overline{\det(X)} = \prod_{l} \frac{|G|}{r_1} = \prod_{i=1}^h |\mathcal Z_G(g_i)|$ 
where $g_i$ are representative of conjugacy classes. Now, make use of $X=P\overline X$ where $P$ is 
a permutation matrix to get the answer.

\begin{exercise}
Prove that the sum of any row in the character table is a non-negative integer.
\end{exercise}
{\bf Hint: } Look at the action of $G$ on itself by conjugation. Take the corresponding 
representation $\rho\colon G\rightarrow \GL(k[G])$. Calculate the character $\tau$ of $\rho$, i.e, 
show that $\tau(g) = |\mathcal Z_G(g)|$. Now, write $\tau = \sum_{i=1}^h m_i\chi_i$ and prove that 
$m_i= \langle \tau, \chi_i\rangle = \sum_{j=1}^h \chi_i(g_j)$.

\begin{exercise}
Let $\chi$ be an irreducible character. Show that $\sum_{g\in G} \chi(g) =0$.  
\end{exercise}
{\bf Hint:} Consider the inner product of $\chi$ with the trivial character.

\begin{exercise}
Let $\rho\colon G \rightarrow \GL(V)$ be an irreducible representation and $\chi$ be its character. 
Then, $ker(\rho)$ is $\{g\in G \mid \chi(g) =\chi(1)\}$. Thus, $ker(\rho)$ can be determined from 
the character table as being a normal subgroup it is a union of certain conjugacy classes.  
\end{exercise}
{\bf Hint: } Suppose dimension of $V$ is $n$. We know $\chi(g)=\lambda_1+\cdots + 
\lambda_n$ where $\rho(g)$ can be thought of diagonal matrix $diag(\lambda_1, \ldots , 
\lambda_n)$ and each $\lambda_i$ are roots of unity. Then, $|\chi(g)| \leq n$. When can equality occur gives the answer.
{\color{teal}
\begin{definition}
Let $\rho\colon G \rightarrow \GL(V)$ be a representation and $\chi$ be its character. The kernel of 
character $\chi$ is defined to be  $ker(\chi)=\{g\in G \mid \chi(g) =\chi(1)\}$.
\end{definition}}
\begin{exercise}
A normal subgroup of $G$ is a disjoint union of conjugacy classes. 
\end{exercise}

\begin{exercise}
Any normal subgroup can be obtained by looking at the intersection of the kernel of some characters. 
\end{exercise}
{\bf Hint: } Let $N$ be a normal subgroup. Then, $G$ acts on $G/N$ giving rise to a representation, say $\rho$. Let $\psi$ be its character. The kernel of character $\psi$ is precisely $N$. However, we don't know if this character is irreducible. But, we can write $\psi=\sum m_i\chi_i$.  Then, $ker(\rho)$ is the intersection of kernels of $\chi_i$ appearing in this sum.

\begin{exercise}
Let $\chi$ be a character which takes all values $0$ except at $e$. Show that $\chi$ is a multiple 
of the regular character $\chi_{reg}$. 
\end{exercise}
{\bf Hint:} Let us compute $\langle \chi, {\bf 1} \rangle = \frac{1}{|G|}\chi(e)\neq 0$ 
integer (here ${\bf 1}$ is the trivial character). Thus, $\chi(e) = \langle \chi, {\bf 1} \rangle 
.|G| = \langle \chi, {\bf 1} \rangle. \chi_{reg}(e)$. Hence, $\chi = \langle \chi, {\bf 1} 
\rangle. \chi_{reg}$. 

\begin{exercise}
Let $(\rho,V)$ be a faithful representation of $G$. Then, every irreducible representation of $G$ 
is a subrepresentation of $V^{\otimes m}$, for some $m$.
\end{exercise}
{\bf Hint:} Let $\chi$ be the character of $\rho$ and dimension be $n$. Let $\{\chi(g) \mid g\neq 
e\} = \{\alpha_1, \ldots, \alpha_m\}$. Consider $\phi = \chi\prod_{i=1}^m (\chi-\alpha_i \bf 1)$ in 
$\mathbb C[G]$. Clearly, $\phi$ is a class function and $\phi(g)=0$ if $g\neq e$ whereas $\phi(e) = 
\chi(e)\prod_{i=1}^m (\chi(e)-\alpha_i) = n\prod_{i=1}^m (n-\alpha_i)\neq 0$. Hence, $\phi$ is a 
multiple of the regular character $\chi_{reg}$, i.e., $\phi=r \chi_{reg}$. Now, let $\chi_i$ be 
an irreducible character. Let us assume contrary that $\chi_i$ doesn't appear in $\chi^m$ for 
all $m$, i.e., $\langle \chi_i, \chi^m\rangle =0$ for all $m$. Then, $\langle \chi_i, \phi \rangle 
=0$ and hence $\langle \chi_i, r \chi_{reg} \rangle =0$ which is a contradiction that $\chi_{reg}$ 
contains all irreducible characters.

\begin{exercise}
Let $\chi$ be a non-linear (i.e., dimension is greater than $1$) irreducible character of $G$. 
Prove that there exists a conjugacy class where $\chi$ takes value $0$.
\end{exercise}

\begin{exercise}
\begin{enumerate}
\item Let $G$ be a finite group of size $n$ with the number of conjugacy classes $h$. Suppose $\frac{|G|}{|[G,G]|}=r$. Then, $n+3r\geq 4h$.

{\bf Hint: } We know that $G$ has $h$ irreducible characters of dimension say, $n_i$. We also know that there are $r$ irreducible characters of dimension $1$. So, $n=|G|= \sum_{i=1}^h n_i^2 =  \sum_{n_i=1} n_i^2 + \sum_{n_i\geq 2} n_i^2 = r+\sum_{n_i\geq 2} n_i^2 \geq r +4(h-r)$. 
\item {\bf Commuting Probability}
Given a finite group $G$, the commuting probability $cp(G) =\frac{|\{(g_1,g_2)\in G^2\mid g_1g_2=g_2g_1\}|}{|G|^2}$. Show that if $G$ is non-Abelian, $cp(G)\leq \frac{5}{8}$. This is due to Erd\"os and Turan.

{\bf Hint: }  We write $|\{(g_1,g_2)\in G^2\mid g_1g_2=g_2g_1\}| = \sum_{g\in G} |\mathcal Z_G(g)| =\sum_{g\in G}\frac{|G|}{|Cl(g)|} = |G|h$ (as each $Cl(g)$ appears as many times in the sum). Hence, $cp(G) = \frac{h}{|G|}$. Using the previous exercise $cp(G) = \frac{h}{|G|} \leq \frac{1}{4} +\frac{3r}{4n}$. Now, since $G$ is non-Abelian we have $|[G,G]|\geq 2$, i.e, $r=\frac{|G|}{|[G,G]|}\leq \frac{|G|}{2}$. This gives us $cp(G)\leq \frac{1}{4} +\frac{3r}{4n} \leq  \frac{1}{4} +\frac{3}{8}=\frac{5}{8}$. 
\item Show, by computing the $cp(G)$ for groups of order $8$, that equality holds in the above.
\end{enumerate}
\end{exercise}


\chapter{Examples}

Let $G$ be a finite group. Let $\rho_1,\ldots,\rho_h$ be irreducible representations of $G$ over $\mathbb C$ with corresponding characters 
$\chi_1,\ldots,\chi_h$. We know that the number $h$ is equal to the number of conjugacy classes in $G$. We can make use 
of this information about $G$ and make a {\bf character table} of $G$. The character table is a matrix of size $h\times h$
of which rows are labelled as characters and columns as conjugacy classes.
\vskip3mm
\[\begin{array}{c|c|c|c|c|}
        & r_1=1 & r_2  & \cdots   & r_h \\
        & g_1=e & g_2 & \cdots &   g_h   \\ \hline
\chi_1  & n_1=1&1 &\cdots &1    \\ \hline
\chi_2  & n_2& & &   \\ \hline
\vdots & \vdots& & &   \\ \hline
\chi_h & n_h& & &   \\ \hline
\end{array}\]
\vskip2mm
where $g_1,g_2,\ldots$ denote representative of the conjugacy class and $r_i$ denotes the number of elements in the conjugacy
class of $g_i$.
The following proposition summarizes the results proved about the characters. We also recall the inner product on $\mathbb C[G]$
defined by $\langle f_1,f_2\rangle = \frac{1}{|G|}\sum_{t\in G} f_1(t)\overline{f_2(t)}$.
\begin{proposition}
With the notation as above, we have,
\begin{enumerate}
\item The number of conjugacy classes is the same as the number of irreducible characters which is the same as the number
of non-isomorphic irreducible representations.
\item Two representations are isomorphic if and only if their characters are equal.
\item A representation $\rho$ with character $\chi$ is irreducible if and only if $\langle \chi,\chi\rangle =1$.
\item $|G|=n_1^2+n_2^2+\cdots +n_h^2$ where $n_1=1$ corresponds to the trivial character.
\item The characters form an orthonormal basis of $\mathcal H$, i.e., 
$$\sum_{t\in G}\chi_i(t)\overline{\chi_j(t)}=\sum_{g_l}r_l\chi_i(g_l)\overline{\chi_j(g_l)}= \delta_{ij}|G|.$$
That is, the rows of the character table are orthonormal.
\item The columns of the character table also form an orthogonal set, i.e.,
$$\sum_{i=1}^h \chi_i(g_l)\overline{\chi_i(g_l)} = \frac{|G|}{r_l}$$ and 
$$\sum_{i=1}^h \chi_i(g_l)\overline{\chi_i(g_m)} =0$$ 
where $g_l$ and $g_m$ are representative of different conjugacy classes.
\item The character table matrix is an invertible matrix.
\item The degree of irreducible representations divide the order of the group, i.e., $n_i\mid |G|$.
\end{enumerate}
\end{proposition}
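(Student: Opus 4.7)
The plan is to observe that parts (1)--(6) are essentially a bookkeeping exercise: each has been established earlier. Part (1) is the Main Theorem; part (2) is the corollary to Theorem~\ref{irrcomponents}; part (3) is the Irreducibility Criterion; part (4) comes from evaluating $l = n_1\chi_1 + \cdots + n_h\chi_h$ at the identity; part (5) is Theorem~\ref{charorthonormal} rewritten by grouping terms in the sum $\sum_{t \in G}$ according to conjugacy classes; and part (6) is the preceding proposition restated. So the work is really in parts (7) and (8).

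For part (7), by part (1) the character table is a genuinely square $h \times h$ matrix $M = (\chi_i(g_l))$. By part (5), if I form the matrix $\widetilde M$ whose $(i,l)$ entry is $\sqrt{r_l/|G|}\,\chi_i(g_l)$, then the rows of $\widetilde M$ are orthonormal vectors in $\mathbb{C}^h$. A square matrix with orthonormal rows is unitary hence invertible, and $M$ differs from $\widetilde M$ by an invertible diagonal scaling on columns, so $M$ itself is invertible.

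The heart of the work is part (8). The plan is the classical algebraic integer argument. First I would show that each character value $\chi_i(g)$ is an algebraic integer: by Corollary~\ref{diagonal} the matrix $\rho_i(g)$ is diagonalizable with roots of unity on the diagonal, so $\chi_i(g)$ is a sum of roots of unity and hence an algebraic integer. Next, for each conjugacy class $C$ with representative $g_C$ of size $r_C$, I consider the class sum $c_C = \sum_{x \in C} x \in \mathbb{Z}[G]$; these class sums form a $\mathbb{Z}$-basis of the center of $\mathbb{Z}[G]$ and satisfy $c_C c_{C'} = \sum_{C''} m_{C,C'}^{C''}\, c_{C''}$ with nonnegative integer structure constants $m_{C,C'}^{C''}$. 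Since $c_C$ is central, Schur's lemma gives $\rho_i(c_C) = \omega_i(C)\cdot Id$, and taking traces shows $\omega_i(C) = r_C \chi_i(g_C)/n_i$. The relations on class sums yield $\omega_i(C)\omega_i(C') = \sum m_{C,C'}^{C''} \omega_i(C'')$, so the $\omega_i(C)$ generate a finitely generated subring of $\mathbb{C}$, hence each $\omega_i(C)$ is an algebraic integer. This is the main obstacle: identifying the right ``integral'' object to apply Schur's lemma to.

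Finally I assemble the pieces. By the row orthogonality relation $\langle \chi_i, \chi_i \rangle = 1$, expanded as a sum over conjugacy classes,
\[
\frac{|G|}{n_i} \;=\; \sum_C \frac{r_C\,\chi_i(g_C)\overline{\chi_i(g_C)}}{n_i} \;=\; \sum_C \omega_i(C)\,\overline{\chi_i(g_C)}.
\]
The right hand side is a sum of products of algebraic integers, hence an algebraic integer. The left hand side is a positive rational, and a rational algebraic integer is a rational integer, so $n_i$ divides $|G|$.
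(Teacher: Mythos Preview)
Your proposal is correct and matches the paper's approach: parts (1)--(6) are indeed summaries of earlier results exactly as you identify them, and your argument for part (8) via the class sums $c_C$, the scalars $\omega_i(C)=r_C\chi_i(g_C)/n_i$ being algebraic integers, and the orthogonality relation rewritten as $|G|/n_i=\sum_C\omega_i(C)\,\overline{\chi_i(g_C)}$ is precisely the proof given later in the chapter on characters and algebraic integers (the paper's $\lambda_{ij}$ is your $\omega_i(C)$, and $\chi_i(g_j^{-1})$ is your $\overline{\chi_i(g_C)}$). The paper does not spell out (7), so your unitary-matrix argument is a welcome addition.
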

The proof of the last statement will be done later.

{\bf Warning :} If the character table of two groups are the same that does not imply that the groups are isomorphic. 
Look at the character tables of $Q_8$ and $D_4$.
In fact, all non-Abelian groups of order $p^3$ (there are two non-isomorphic ones) have the same 
character table (find a reference for this).

\section{Groups having Large Abelian Subgroups}
First, we will give another proof of the Theorem~\ref{Abelian} using Character Theory.
{\color{red}
\begin{theorem}
Let $G$ be a finite group. Then $G$ is Abelian if and only if all irreducible representations are of dimension $1$, 
i.e., $n_i=1$ for all $i$.
\end{theorem}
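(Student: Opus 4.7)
The plan is to give a short character-theoretic proof that hinges on two facts already established in the preceding chapter: first, the number $h$ of (isomorphism classes of) irreducible representations equals the number of conjugacy classes of $G$; second, the degrees satisfy $\sum_{i=1}^{h} n_i^2 = |G|$.

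For the forward direction, I would assume $G$ is Abelian. Then every conjugacy class of $G$ is a singleton, so the number of conjugacy classes equals $|G|$, and hence $h = |G|$. Substituting into the degree formula gives $\sum_{i=1}^{h} n_i^2 = h$. Since each $n_i$ is a positive integer, the only way this sum of squares can equal the number of summands is if $n_i = 1$ for every $i$.

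For the converse, suppose $n_i = 1$ for all $i$. Then $|G| = \sum_{i=1}^{h} n_i^2 = h$, so the number of conjugacy classes of $G$ equals $|G|$. This forces every conjugacy class to be a singleton, which means $xgx^{-1} = g$ for all $x, g \in G$, i.e., $G$ is Abelian. (Alternatively, I could note that Proposition~\ref{abelian1dimension} already gives the forward direction directly from Schur's Lemma, so the real content of this character-theoretic reformulation is the converse via the degree identity.)

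No step here should present a serious obstacle; the only thing to be careful about is citing the correct inputs (the counting theorem for $h$ and the sum-of-squares formula), both of which are available from the previous chapter, so the proof reduces to a one-line numerical argument in each direction.
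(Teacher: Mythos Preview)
Your proof is correct and is essentially the same as the paper's: both directions rely on the identity $|G|=\sum_{i=1}^{h}n_i^2$ together with the Main Theorem identifying $h$ with the number of conjugacy classes, reducing each implication to the one-line numerical argument you give.
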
}
\begin{proof}
With the notation as above let $G$ be Abelian. We have 
$$|G|=n_1^2+\cdots+n_h^2$$
where $h=|G|$. Hence the only solution to the
equation is $n_i=1$ for all $i$. Now suppose $n_i=1$ for all $i$. Then the above equation implies $h=|G|$. 
Hence each conjugacy class has a size of $1$ and the group is Abelian.
\end{proof}
\begin{proposition}
Let $G$ be a group and $A$ be an Abelian subgroup. Then $n_i\leq \frac{|G|}{|A|}$ for all $i$.
\end{proposition}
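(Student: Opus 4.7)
The plan is to pick an irreducible representation $(\rho,V)$ of $G$ of dimension $n_i$ and exploit the restriction $\rho|_A$. Since $A$ is Abelian, Proposition~\ref{abelian1dimension} (or rather the diagonalization proposition preceding it) applied to $\rho|_A$ says that $V$ decomposes as a direct sum of one-dimensional $A$-invariant subspaces. So I can choose a one-dimensional $A$-invariant subspace $W \subset V$.

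Next I would consider the subspace
\[ V' = \sum_{g \in G} \rho(g) W \subset V. \]
This is $G$-invariant by construction and nonzero since it contains $W$. Because $(\rho,V)$ is irreducible, we must have $V' = V$. The key observation is now that the subspace $\rho(g)W$ depends only on the left coset $gA$: indeed, for any $a \in A$, the $A$-invariance of $W$ gives $\rho(a)W = W$, so $\rho(ga)W = \rho(g)\rho(a)W = \rho(g)W$. Therefore the family $\{\rho(g)W : g \in G\}$ contains at most $[G:A] = |G|/|A|$ distinct one-dimensional subspaces.

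Since $V = V'$ is a sum of at most $|G|/|A|$ subspaces each of dimension one, we conclude
\[ n_i = \dim V \leq \frac{|G|}{|A|}, \]
which is the desired inequality.

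There is essentially no hard step here once one has Schur's lemma for Abelian groups (giving the simultaneous diagonalization of $\rho(A)$); the only thing to be careful about is the coset argument showing that $\rho(g)W$ is well-defined on $G/A$, which is what makes the count $|G|/|A|$ rather than $|G|$. Irreducibility is used exactly once, to upgrade the $G$-invariant subspace $V'$ to all of $V$.
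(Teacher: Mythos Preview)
Your proof is correct and follows essentially the same approach as the paper: pick a one-dimensional $A$-invariant line $W=\langle v\rangle$, form the $G$-invariant span $V'=\sum_{g\in G}\rho(g)W$, use irreducibility to get $V'=V$, and then observe that $\rho(g)W$ depends only on the coset $gA$ so that $V$ is spanned by at most $[G:A]$ lines. The only cosmetic difference is that the paper phrases things in terms of the vector $v$ rather than the line $W$.
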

\begin{proof}
Let $\rho\colon G\rightarrow \GL(V)$ be an irreducible representation. We can restrict $\rho$ to $A$ 
and denote it
by $\rho_A\colon A\rightarrow \GL(V)$ which may not be irreducible. Let $W$ an irreducible 
$\rho_A$- invariant subspace of $V$.
The above theorem implies $\dim(W)=1$. Say $W=<v>$ where $v\neq 0$. Consider $V'=<\{\rho(g)v \mid g\in G\}>\subset V$.
Clearly $V'$ is $G$-invariant and as $V$ is irreducible $V'=V$. Notice that $\rho(ga)v=\lambda\rho(g)v$, i.e., $\rho(ga)v$ and
$\rho(g)v$ are linearly dependent. Hence $V'=<\{\rho(g_1)v,\ldots,\rho(g_m)v\}>$ where $g_i$ are representatives of the 
coset $g_iA$ in $G/A$. 
This implies $\dim(V)\leq m =\frac{|G|}{|A|}$.
\end{proof}
\begin{corollary}\label{dihedral}
Let $G=D_n$ be the dihedral group with $2n$ elements. Any irreducible representation of $D_n$ has dimension $1$ or $2$. 
\end{corollary}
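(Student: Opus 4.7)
The plan is to apply the preceding proposition directly, using the standard cyclic subgroup of $D_n$ as the witnessing abelian subgroup.

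First I would recall the presentation $D_n=\langle r,s \mid r^n=1=s^2, srs=r^{-1}\rangle$ and observe that the subgroup $A=\langle r\rangle$ is cyclic of order $n$, hence abelian. Since $|D_n|=2n$, the index $[D_n:A]=2$.

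Next, I would invoke the proposition stating that for any abelian subgroup $A\le G$ and any irreducible representation of dimension $n_i$, we have $n_i\le |G|/|A|$. Applied here this gives $n_i\le 2$ for every irreducible representation of $D_n$. Since dimensions of representations are positive integers, $n_i\in\{1,2\}$, which is exactly the claim.

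There is no real obstacle: the entire content is the identification of a large enough abelian subgroup (the rotation subgroup), after which the bound from the proposition is immediate. One could optionally remark, for concreteness, that both dimensions actually occur --- the trivial and sign representations of $D_n$ give examples of dimension $1$, while the two-dimensional representations $\rho_r$ constructed in Example~\ref{dihedralrep} give examples of dimension $2$ --- but this is not needed for the corollary itself.
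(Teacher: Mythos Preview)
Your proof is correct and is exactly the intended argument: the corollary is stated immediately after the proposition bounding $n_i$ by $|G|/|A|$, and applying it with the abelian rotation subgroup $A=\langle r\rangle$ of index~$2$ gives $n_i\le 2$. The paper gives no separate proof, so your write-up matches its approach precisely.
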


\section{Character Table of Some Groups}
\begin{example}[Cyclic Group]
Let $G=\mathbb Z/n\mathbb Z$. All representations are one-dimensional hence giving character. The characters are $\chi_1,\ldots,\chi_n$ given by $\chi_r(s)=e^{\frac{2\pi i}{n}rs}$ for $0\leq s\leq n$.
\end{example}

\begin{example}[$S_3$]

$$S_3=\{1,(12), (23), (13), (123), (132)\}$$
We already know the one-dimensional representations of $S_3$ which are trivial representations and the sign representation.
The characters for these representations are themselves. Now we use the formula $6=|G|=n_1^2+n_2^2+n_3^2 = 1+1+n_3^2$ gives 
$n_3=2$. Now we use $\langle \chi_3,\chi_1\rangle = \frac{1}{6} \{1.2.1 + 3.a.1 + 2.b.1\} =0$ and 
$\langle \chi_3,\chi_2\rangle = \frac{1}{6}\{1.2.1 + 3.a.(-1) + 2.b.1 =0\}$. Hence solving the above equations $3a+2b=-2$ and
$-3a+2b=-2$ gives $a=0$ and $b=-1$.

\vskip1mm
\[\begin{array}{c|c|c|c|}
        & r_1=1 & r_2=3    & r_3=2 \\
        & g_1=1 & g_2=(12) & g_3=(123)   \\ \hline
\chi_1  & n_1=1 & 1        & 1    \\ \hline
\chi_2  & n_2=1 & -1       & 1  \\ \hline
\chi_3  & n_3=2 & a=0        & b=-1   \\ \hline
\end{array}\]
\vskip1mm
\end{example}

\begin{example}[$Q_8$]
$$Q_8=\{1,-1, i,-i,j,-j,k,-k\}$$
The commutator subgroup of $Q_8=\{1,-1\}$ and $Q_8/\{\pm1\}\cong \mathbb Z/2\mathbb Z \times \mathbb Z/2\mathbb Z$.
Hence it has $4$ one-dimensional representations which are lifted from $\mathbb Z/2\mathbb Z \times \mathbb Z/2\mathbb Z$.
Notice that the first $\mathbb Z/2\mathbb Z$ component is the image of $i$ and the second one of $j$.
Now we use $8=|G|=n_1^2+n_2^2+n_3^2 +n_4^2 +n_5^2 = 1+1+1+1+n_5^2$ gives $n_5=2$. Again using orthogonality of $\chi_5$
with other known $\chi_i$'s we get the following equations:
\begin{eqnarray*}
1.2.1 + 1.a.1 + 2.b.1 + 2.c.1 + 2.d.1 &=& 0 \\
1.2.1 + 1.a.1 + 2.b.1 + 2.c.(-1) + 2.d.(-1) &=& 0 \\
1.2.1 + 1.a.1 + 2.b.(-1) + 2.c.1 + 2.d.(-1) &=& 0 \\
1.2.1 + 1.a.1 + 2.b.(-1) + 2.c.(-1) + 2.d.1 &=& 0 
\end{eqnarray*}
This gives the solution $a=-2, b=0, c=0$ and $d=0$.
\vskip1mm
\[\begin{array}{c|c|c|c|c|c|}
        & r_1=1 & r_2=1  & r_3=2   & r_4=2 & r_5=2 \\
        & g_1=1 & g_2=-1 & g_3=i &   g_4=j & g_5 =k   \\ \hline
\chi_1  & n_1=1 & 1      & 1     &  1     &  1  \\ \hline
\chi_2  & n_2=1 & 1     & 1    &   -1     &-1   \\ \hline
\chi_3  & n_3=1 & 1     &  -1   &  1   & - 1  \\ \hline
\chi_4 & n_4 = 1& 1     & -1    &  -1    & 1 \\ \hline
\chi_5 & n_5=2  & a=-2  & b= 0  & c= 0   & d=0 \\ \hline
\end{array}\]
\vskip1mm
\end{example}

\begin{example}[$D_4$]
  $$D_4=\{r,s \mid r^4=1=s^2, rs=sr^{-1}\}$$
The commutator subgroup of $D_4$ is $\{1,r^2\}=\mathcal Z(D_4)$. 
And $D_4/\mathcal Z(D_4)\cong \mathbb Z/2\mathbb Z\times \mathbb Z/2\mathbb Z$ hence there are $4$ one dimensional 
representations as in the case of $Q_8$. We can also compute the rest of it as we did in $Q_8$. We also observe
that the character table is the same as $Q_8$.
\vskip1mm
\[\begin{array}{c|c|c|c|c|c|}
        & r_1=1 & r_2=1  & r_3=2   & r_4=2 & r_5=2 \\
        & g_1=1 & g_2=r^2 & g_3=r &   g_4=s & g_5 =sr   \\ \hline
\chi_1  & n_1=1 & 1      & 1     &  1     &  1  \\ \hline
\chi_2  & n_2=1 & 1     & 1    &   -1     &-1   \\ \hline
\chi_3  & n_3=1 & 1     &  -1   &  1   & - 1  \\ \hline
\chi_4 & n_4 = 1& 1     & -1    &  -1    & 1 \\ \hline
\chi_5 & n_5=2  & a=-2  & b= 0  & c= 0   & d=0 \\ \hline
\end{array}\]
\vskip1mm
\end{example}

\begin{example}[$A_4$]
$$A_4=\{1,(12)(34),(13)(24),(14)(23), (123),(132), (124),(142),(134), (143), (234),(243)\}$$
We note that the $3$ cycles are not conjugate to their inverses. Here we have 
$$H= \{1,(12)(34),(13)(24),(14)(23)\}\cong \mathbb Z/2\mathbb Z\times \mathbb Z/2\mathbb Z$$ a normal subgroup of $A_4$ and $A_4/H\cong \mathbb Z/3\mathbb Z$. 
This way the $3$ one dimensional irreducible representations of $ \mathbb Z/3\mathbb Z$ lift to 
$A_4$ as we have maps $A_4\rightarrow A_4/H   \cong  \mathbb Z/3\mathbb Z\rightarrow \GL_1(\mathbb 
C)$ given by $\omega$ where $\omega^3=1$. 
We use $1^2+1^2+1^2+n_4^2=12=|A_4|$ to get $n_4=3$. 
Now we take the inner product of $\chi_4$ with others and get the equations:
\begin{eqnarray*}
1.3.1+3.a.1+4.b.1+4.c.1 &=& 0\\
1.3.1+3.a.1+4.b.\overline{\omega} + 4.c.\overline{\omega}^2 &=&0\\
1.3.1+3.a.1+4.b.\overline{\omega}^2 + 4.c.\overline{\omega} &=&0
\end{eqnarray*} 
This gives us the character table.
\vskip1mm
\[\begin{array}{c|c|c|c|c|}
        & r_1=1 & r_2=3  & r_3=4   & r_4=4 \\
        & g_1=1 & g_2=(12)(34) & g_3=(123) &   g_4=(132)   \\ \hline
\chi_1  & n_1=1& 1 & 1& 1    \\ \hline
\chi_2  & n_2=1& 1&\omega &\omega^2   \\ \hline
\chi_3 & n_3=1& 1&\omega^2 &\omega   \\ \hline
\chi_4 & n_4=3& a=-1&b=0 &c=0   \\ \hline
\end{array}\]
\vskip1mm 
\end{example}

\begin{example}[$S_4$]
The group $S_4$ has $2$ one-dimensional representations given by trivial and sign. We recall that the permutation representation (Example~\ref{permutation}) gives rise to the subspace $V=\{(x_1,x_2,\ldots,x_n)\in\mathbb C^n\mid x_1+x_2+\cdots +x_n=0\}$ which is an $n-1$ dimensional irreducible representation of $S_n$. We will make use of this to get a $3$ dimensional representation of $S_4$ and corresponding character $\chi_3$. A basis of the space $V$ is $\{e_1-e_2,e_2-e_3,e_3-e_4\}$ and the action is given by:
\begin{eqnarray*}
(12) : & (e_1-e_2) \mapsto e_2-e_1 = -(e_1-e_2)\\
          & (e_2-e_3)\mapsto e_1-e_3 = (e_1-e_2) + (e_2-e_3)\\
          &(e_3-e_4) \mapsto (e_3-e_4)
\end{eqnarray*}
So the matrix is $\left[\begin{matrix} -1 & 1 &0 \\ 0&1 & 0 \\ 0 &0 &1\end{matrix}\right]$ and $\chi_3((12))=1$. The action of $(12)(34)$ is $e_1-e_2\mapsto e_2-e_1=-(e_1-e_2)$, $e_2-e_3\mapsto e_1-e_4=(e_1-e_2)+(e_2-e_3)+(e_3-e_4)$ and $e_3-e_4 \mapsto e_4-e_3 =-(e_3-e_4)$ and the matrix is $\left[\begin{matrix} -1 & 1 &0 \\ 0&1 & 0 \\ 0 &1 &-1\end{matrix}\right]$. So $\chi_3((12)(34))=-1$. The action of $(123)$ is $e_1-e_2\mapsto e_2-e_3$, $e_2-e_3\mapsto e_3-e_1=-(e_1-e_2)-(e_2-e_3)$ and $e_3-e_4\mapsto e_1-e_4 = (e_1-e_2)+(e_2-e_3)+(e_3-e_4)$. So the matrix is $\left[\begin{matrix} 0 & -1 &1 \\ 1&-1 & 1 \\ 0 &0 &1\end{matrix}\right]$ and $\chi_3((123))=0$. And the action of $(1234)$ is $e_1-e_2\mapsto e_2-e_3$, $e_2-e_3\mapsto e_3-e_4$ and $e_3-e_4\mapsto e_4-e_1=-(e_1-e_2)-(e_2-e_3)-(e_3-e_4)$.
So the matrix is $\left[\begin{matrix} 0 & 0 &-1 \\ 1&0 & -1 \\ 0 &1 &-1\end{matrix}\right]$ and $\chi_3((1234))=-1$.
This gives $\chi_3$. We can get another character $\chi_4=\chi_3.\chi_2$ corresponding to the representation $Perm\otimes sgn$. We can check that this is different from others so corresponds to a new representation and is also irreducible as $\langle \chi_4,\chi_4\rangle =1$.

To find $\chi_5$ we can use orthogonality relations and get equations.
\vskip1mm 
\[\begin{array}{c|c|c|c|c|c|}
          & r_1=1 & r_2=6  &r_3=3   & r_4=8 &r_5=6 \\
          & g_1=1 & g_2=(12) & g_3=(12)(34) &   g_4=(123) &g_5=(1234)   \\ \hline
\chi_1  & n_1=1&1 &1 &1 &1   \\ \hline
\chi_2  & n_2=1&-1 &1 &1 &-1   \\ \hline
\chi_3 & n_3=3& 1 &-1 &0&-1   \\ \hline
\chi_4 & n_4=3& -1 &-1 &0 &1   \\ \hline
\chi_5 & n_5=2 &a=0&b=2&c=-1&d=0\\\hline
\end{array}\]
\vskip2mm
The representation to which $\chi_5$ corresponds is the $2$-dimensional irreducible representation of
$S_4/V_4\cong S_3$. Let us understand this isomorphism. Consider the action of $S_4$ on the set 
$X=\{x_1=\{1,2\}\sqcup \{3,4\}, x_2=\{1,3\}\sqcup \{2,4\}, x_3=\{1,4\}\sqcup \{2,3\}\}$. It's a transitive
action with kernel $V_4$.

\end{example}
\begin{example}[$D_n$, $n$ even]
 $$D_n=\{a,b \mid a^n=1=b^2, ab=ba^{-1}\}$$
and the conjugacy classes are $\{1\}$, $\{a^{\frac{n}{2}}\}$, $\{a^j,a^{-j}\}$ for all $1\leq j\leq 
\frac{n}{2}-1$, $\{a^jb\mid j\ even\}$ and $\{a^jb\mid j\ odd\}$. The center is $\{1, 
a^{\frac{n}{2}} \}$. The subgroup generated by $a^2$ is a commutator and is normal hence there 
are $4$ one-dimensional representations.
The rest of them are two-dimensional (refer to Corollary~\ref{dihedral}) representations defined in the Example~\ref{dihedralrep}. Using them we can make the character table.
\end{example}

\begin{example}[$D_n$, $n$ odd]
 $$D_n=\{a,b \mid a^n=1=b^2, ab=ba^{-1}\}$$
and the conjugacy classes are $\{1\}$, $\{a^j,a^{-j}\}$ for all $1\leq j\leq \frac{n-1}{2}$ and
$\{a^jb\mid 0\leq j\leq n-1\}$. The commutator subgroup is generated by $a$, and is normal and hence 
there are $2$ one-dimensional representations.
The rest of them are two-dimensional (refer to Corollary~\ref{dihedral}) representations defined in the Example~\ref{dihedralrep}. Using them we can make the character table.
\end{example}

\begin{exercise}
Write down a criterion such that a representation is conjugate to its own adjoint.
Find examples of representations which are not equivalent to their adjoint (analyse one-dimensional representations). 
\end{exercise}

\begin{exercise}\label{character-action}
Let $G$ be a group acting on a finite set $X$. Let us denote the corresponding representation on 
$k[X]$ by $\lambda_X$.
\begin{enumerate}
\item A function $f\in k[X]$ is fixed by $\lambda_X$ if and only if $f$ is constant on orbits of 
$G$ in $X$. In particular, $\dim_{k}k[X]^G$ is the number of orbits of $G$ in $X$.
\item Show that the character of this representation $\chi_{\lambda_X}(g)$ is the number of fixed 
points of $g$ in $X$.
\item Show that $k[X]=W \oplus W_0$, where $W$ is space of constant functions and $W_0=\{f\in 
k[X]\mid \sum_{x\in X}f(x)=0\}$, is a $G$ decomposition.
\end{enumerate}
\end{exercise}

\begin{exercise}
In continuation to Exercise~\ref{character-action}, now let us assume that $G$ is doubly 
transitive on $X$.
\begin{enumerate}
\item Consider the diagonal action of $G$ on $X\times X$. Show that $\frac{1}{|G|}\sum_{g\in 
G}\chi_{\lambda_{X\times X}}(g) =2$.
\item Show that $k[X]\otimes k[X]\cong k[X\times X]$ as $G$-modules and hence 
$\chi_{\lambda_{X\times X}}=\chi_{\lambda_X}^2$.
\item Show that $2=\langle \chi_{\lambda_X}, \chi_{\lambda_X}\rangle_G$.
\item Show that $W_0$ is irreducible by showing that the character has norm $1$. 
 \end{enumerate}

\end{exercise}

\section{Characters of Direct Product}
Let $G$ and $G'$ be two finite groups. Let $(\rho, V)$ be a representation of $G$ and $(\rho', V')$ 
be a representation of $G'$. Then, we can define $(\rho\otimes\rho', V\otimes V')$ a representation 
of $G\times G'$ as follows: $\rho\times\rho' \colon G\times G' \rightarrow \GL(V\otimes V')$ such 
that $(\rho\otimes\rho')(g,g')(\sum v\otimes v') = \sum \rho(g)(v) \otimes \rho(g')(v')$.
\begin{exercise}
Show that $\rho\otimes \rho'$ is a representation of $G\times G'$. 
\end{exercise}
\begin{exercise}
Let $\chi$ be the character of $\rho$ and $\chi'$ be that of $\rho'$. Show that the character of $\rho\otimes \rho'$ is $\chi\chi'$ where $(\chi\chi')(g,g')=\chi(g)\chi'(g')$.
\end{exercise}
\begin{proposition}
Consider $\mathbb C$-representations. If $\rho$ and $\rho'$ are irreducible representations of $G$ and $G'$ respectively then $\rho\otimes \rho'$ is an irreducible representation of $G\times G'$.
\end{proposition}
\begin{proof} 
\begin{eqnarray*}
\langle \chi\chi', \chi\chi' \rangle &=& 
\frac{1}{|G\times G'|} \displaystyle\sum_{(t,t')\in G\times G'} \chi(t)\chi'(t')\bar\chi(t)\bar\chi'(t') \\
&=& \frac{1}{|G||G'|} \left(\displaystyle\sum_{t\in G} \chi(t)\bar\chi(t)\right)\left (\displaystyle\sum_{t'\in G'} \chi'(t')\bar\chi'(t')\right)\\
&=& \langle \chi, \chi \rangle  \langle \chi', \chi' \rangle = 1. 
\end{eqnarray*}
\end{proof}

Thus, if $\rho_1 \ldots \rho_r$ are irreducible representations with irreducible characters $\chi_1, \ldots, \chi_r$ of $G$, and $\rho'_1 \ldots \rho'_s$ are irreducible representations with irreducible characters $\chi'_1, \ldots, \chi'_s$ of $G'$ then $\rho_i\otimes \rho'_j$ are irreducible representations of $G\otimes G'$ with characters $\chi\chi'$.  

From the Fundamental Theorem of Abelian groups, any Abelian group is a product of cyclic groups. For the cyclic groups, all irreducible representations can be easily written by finding out an appropriate root of unity in $\mathbb C$. This section gives a complete method to writing down all representations for an Abelian group explicitly. 

\begin{exercise}
Compute the character table of all Abelian groups up to order $25$. 
\end{exercise}
\begin{exercise}
Show that the above Proposition need not be true for a semi-direct product (try the Dihedral groups).
\end{exercise}
Once we understand the induced representations we will be able to find a method to get characters of a semi-direct product of groups.

\chapter{Character Table of $S_5$}

With the theory developed so far, we are now ready to construct the character table for the symmetric group $S_5$. 
The size of this group is $|S_5| = 5! =120$. Two elements $\sigma$ and $\sigma'$ of $S_n$ are conjugate if and only if  their cycle structure is the same when they are written as products of disjoint cycles. Thus, the conjugacy classes in $S_5$ and the size of classes are as follows:
    \begin{center}
    \begin{tabular}{c|c|c|c|c|c|c|c|} 
   $|(g_i)|$ & $1$ & $10$ & $20$ & $15$ & $30$ & $20$ & $24$ \\
   $g_i$ & $1$ & $(1 2)$ & $(1 2 3)$ & $(1 2)(3 4)$ & $(1 2 3 4)$ & $(1 2)(3 4 5)$ & $(1 2 3 4 5)$ \\ 
   \hline 
\end{tabular}
\end{center}
We already know $2$ one-dimensional (and hence irreducible) representations. One is the trivial representation and the other the sign representation which sends each transposition to $-1$.

For any $S_n$, we know an irreducible representation of dimension $n-1$ (as in the Example~\ref{permutation} obtained by the action of $S_n$ on the subspace $V$ of $\mathbb C ^n$ given by $V=\{(x_1,x_2,\ldots,x_n) \in \mathbb C^n \mid  x_1 + x_2 +\cdots + x_n =0 \}$).
We fill  this information into the character table.
We need to find $4$ more irreducible characters. 

Let us take the tensor product of the trivial representation with any other representation $(\rho,V)$ gives a representation isomorphic to $(\rho,V)$ (since $\chi_1 \chi_V = \chi_V$). Hence we only need to consider the tensor product of the second and the third representation namely $sgn\otimes V$ whose character is $\chi_2 \chi_3$. Check $  \langle \chi_2 \chi_3, \chi_2 \chi_3 \rangle = \frac{1}{120}((4)^2 + 10(-2)^2) + 20 (1)^2 +20 (1)^2 +24 (-1)^2) = 1 = \langle  \chi_3,  \chi_3 \rangle
$. Thus this representation turns out to be irreducible. Let $\chi_4 = \chi_2 \chi_3\neq \chi_3$. We include this character $\chi_4$ into the character table:

\begin{center}
    \begin{tabular}{c|c|c|c|c|c|c|c|}
  $|(g_i)|$ & $1$ & $10$ & $20$ & $15$ & $30$ & $20$ & $24$ \\
   $g_i$ & $1$ & $(1 2)$ & $(1 2 3)$ & $(1 2)(3 4)$ & $(1 2 3 4)$ & $(1 2)(3 4 5)$ & $(1 2 3 4 5)$ \\ 
   \hline
   $\chi_1$ & $1$ & $1$ & $1$ & $1$ & $1$ & $1$ & $ 1$ \\
   $\chi_2$ & $1$ & $-1$ & $1$ & $1$ & $-1$ & $-1$ & $1$ \\
   $\chi_3$ & $4$ & $2$ & $1$ & $0$ & $0$ & $-1$ & $-1$ \\ 
$\chi_4$ & $4$ & $-2$ & $1$ & $0$ & $0$ & $1$ & $-1$ \\ 
  \hline
\end{tabular}
\end{center}
\vskip1mm
We now consider $\chi = \chi_3$ and examine the representations $\chi_S$ and $\chi_A$:
\begin{center}
  \begin{tabular}{c|c|c|c|c|c|c|c|}
   $|(g_i)|$ & $1$ & $10$ & $20$ & $15$ & $30$ & $20$ & $24$ \\
   $g_i$ & $1$ & $(1 2)$ & $(1 2 3)$ & $(1 2)(3 4)$ & $(1 2 3 4)$ & $(1 2)(3 4 5)$ & $(1 2 3 4 5)$ \\
   \hline
   $\chi_A$ & $6$ & $0$ & $0$ & $-2$ & $0$ & $0$ & $1$ \\
   $\chi_S$ & $10$ & $4$ & $1$ & $2$ & $0$ & $1$ & $0$ \\
  \hline
\end{tabular}
\end{center}
\vskip2mm
We check that $\chi_A$ is irreducible as $ \langle \chi_A , \chi_A \rangle = \frac{1}{120}((6)^2 + 15(-2)^2 + 24 (1)^2) = 1$. Thus $\chi_5 = \chi_A$ is the fifth irreducible character of $S_5$. 

Suppose that $\chi_6$ and $\chi_7$ are the other $2$ irreducible characters. Since every representation of a finite group can be written as a direct sum of irreducible ones we have, $
\chi_S = m_1 \chi_1 + m_2 \chi_2 + \cdots + m_7 \chi_7 $ where $m_i = \langle \chi_S , \chi_i \rangle$. Calculations show that $\langle \chi_S , \chi_S \rangle = 3$, $\langle \chi_S , \chi_1 \rangle = 1$ and $\langle \chi_S , \chi_3 \rangle = 1$. We also have $\sum m_i ^2 = \langle \chi_S, \chi_S \rangle = 3$.
 Thus $\chi_S = \chi_1 + \chi_3 + \psi$, where $\psi$ is an irreducible character. We rewrite $ \psi = \chi_S - \chi_1 -\chi_3$ explicitly and get

\begin{center}
    \begin{tabular}{c|c|c|c|c|c|c|c|}
   $|(g_i)|$ & $1$ & $10$ & $20$ & $15$ & $30$ & $20$ & $24$ \\
   $g_i$ & $1$ & $(1 2)$ & $(1 2 3)$ & $(1 2)(3 4)$ & $(1 2 3 4)$ & $(1 2)(3 4 5)$ & $(1 2 3 4 5)$ \\ \hline
   $\psi$ & $5$ & $1$ & $-1$ & $1$ & $-1$ & $1$ & $0$ \\\hline
\end{tabular}
\end{center}
\vskip2mm
Since  $\psi$ is irreducible, $\langle \psi, \psi \rangle = \frac{1}{120}(5^2 + 10 + 20 +15 +30 +20) = 1$ as expected. 
Let $\chi_6 = \psi$ then $\chi_7 = \chi_2 \chi_6$ will be another new irreducible character. 
We have thus found the character table of $S_5$:
\vskip2mm

\begin{center}
   \textit{Character Table of $S_5$}\\\vskip2mm
    \begin{tabular}{c|c|c|c|c|c|c|c|}
   $|(g_i)|$ & $1$ & $10$ & $20$ & $15$ & $30$ & $20$ & $24$ \\
   $g_i$ & $1$ & $(1 2)$ & $(1 2 3)$ & $(1 2)(3 4)$ & $(1 2 3 4)$ & $(1 2)(3 4 5)$ & $(1 2 3 4 5)$ \\    \hline 
   $\chi_1$ & $1$ & $1$ & $1$ & $1$ & $1$ & $1$ & $ 1$ \\
   $\chi_2$ & $1$ & $-1$ & $1$ & $1$ & $-1$ & $-1$ & $1$ \\
   $\chi_3$ & $4$ & $2$ & $1$ & $0$ & $0$ & $-1$ & $-1$ \\
$\chi_4$ & $4$ & $-2$ & $1$ & $0$ & $0$ & $1$ & $-1$ \\
$\chi_5$ & $6$ & $0$ & $0$ & $-2$ & $0$ & $0$ & $1$ \\
$\chi_6$ & $5$ & $1$ & $-1$ & $1$ & $-1$ & $1$ & $0$ \\
$\chi_7$ & $5$ & $-1$ & $-1$ & $1$ & $1$ & $-1$ & $0$ \\
  \hline
\end{tabular}
\end{center}\vskip2mm
Using this and restriction we can find the character table of $A_5$ which we do later.

\chapter{Restriction of a Representation}

Let $(\rho, V)$ be a  representation of the group $G$, i.e., $\rho \colon G \rightarrow \GL(V)$ is a 
group homomorphism. Suppose $H$ is a subgroup of $G$. Then we can restrict the map $\rho$ to $H$ 
denoted as $Res_H^G\rho$ or $\rho|_{H}$ to obtain a representation for $H$. Note that even if 
$(\rho, V)$ is an irreducible representation of $G$, $(Res_H^G\rho, V)$ need not be irreducible for 
$H$. 
\begin{example}
Consider a $2$-dimensional irreducible representation of the Dihedral Group and restrict it to its cyclic subgroup. 
\end{example}
\noindent  In this chapter we explore the connection between the characters of a group $G$ and any of its subgroups $H$. Recall from Chapter~\ref{chartheory} we define an inner product on $\mathbb C[G]$ denoted as $\langle , \rangle$. We denote this inner product on $\mathbb C[H]$ by $\langle , \rangle_H$ thinking of $H$ as a group in itself.
 
\begin{proposition} Let $G$ be a group and $H$ be a subgroup. Let $\psi$ be a non-zero character of $H$. Then there exists an irreducible character $\chi$ of $G$ such that $\langle Res_H^G\chi, \psi \rangle_{H} \neq 0$.
\end{proposition}
\begin{proof}
Let $\chi_1, \chi_2, \ldots, \chi_h$ be the irreducible characters of $G$. 
We have the regular representation of $G$: $ \chi_{reg} = \sum_{i=1}^h \chi_{i} (1) \chi_i $, with values $\chi_{reg} (1) = |G| $, $ \chi_{reg} (g) = 0$ for all $g \neq 1$.
  Thus,
  $$   \langle Res_H^G\chi_{reg}, \psi \rangle_{H} = \frac{1}{|H|}\chi_{reg}(1)\psi(1)= \frac{|G|\psi(1)}{|H|} \neq 0.$$
  Therefore, $\langle Res_H^G\chi_{reg}, \psi \rangle_{H} = \sum_{i=1}^h \chi_i(1) \langle Res_H^G\chi_{i},\psi \rangle_{H} \neq 0 $.  Since all quantities involved in this sum are non-negative at least one of the $\langle \chi_i , \psi \rangle_{H} \neq 0$.
\end{proof}
{\color{red}
\begin{theorem}\label{charsubgroup}
Let $G$ be a group and $H$ a subgroup. Let $\chi$ be an irreducible character of $G$. 
Suppose $\psi_1 ,\psi_2 ,\ldots, \psi_k$ are all irreducible characters of $H$ and  
 $Res_H^G\chi = d_1 \psi_1 + d_2 \psi_2 + \cdots + d_k \psi_k$ for some $d_1, d_2, \ldots, d_k$ integers. Then, 
$$\sum_{i=1} ^k d_{i} ^2 \leq \frac{|G|}{|H|}$$
and equality occurs if and only if $\chi (g) = 0$ for all $g \notin H$. 
\end{theorem}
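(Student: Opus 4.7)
The plan is to compute $\langle \chi|_H, \chi|_H\rangle_H$ in two different ways and compare, exploiting the fact that irreducibility of $\chi$ as a character of $G$ gives us the normalization $\langle\chi,\chi\rangle_G = 1$ over the larger group.

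First I would use the hypothesized decomposition $\chi|_H = d_1\psi_1 + \cdots + d_k\psi_k$ together with the orthonormality of the $\psi_i$ (as irreducible characters of $H$, applying Theorem~\ref{charorthonormal} to $H$) to obtain
\[
\langle \chi|_H, \chi|_H\rangle_H = \sum_{i=1}^{k} d_i^{2}.
\]
Next I would expand the left-hand side directly from the definition of the inner product on $\mathbb{C}[H]$, obtaining
\[
\sum_{i=1}^{k} d_i^{2} \;=\; \frac{1}{|H|}\sum_{h\in H}\chi(h)\overline{\chi(h)} \;=\; \frac{1}{|H|}\sum_{h\in H}|\chi(h)|^{2}.
\]

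The second main step is to bring in the irreducibility of $\chi$ over $G$. Since $\langle\chi,\chi\rangle_G = 1$, we have $\sum_{g\in G}|\chi(g)|^{2} = |G|$. Splitting this sum according as $g\in H$ or $g\notin H$ gives
\[
\sum_{h\in H}|\chi(h)|^{2} \;=\; |G| \;-\; \sum_{g\notin H}|\chi(g)|^{2}.
\]
Substituting into the previous identity and dividing by $|H|$ yields
\[
\sum_{i=1}^{k} d_i^{2} \;=\; [G:H] \;-\; \frac{1}{|H|}\sum_{g\notin H}|\chi(g)|^{2}.
\]

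From this single identity both conclusions drop out: the right-most term is a sum of non-negative reals, so $\sum d_i^{2}\le [G:H]$; and equality holds precisely when $\sum_{g\notin H}|\chi(g)|^{2}=0$, i.e.\ when $\chi(g)=0$ for every $g\notin H$. There is no real obstacle here—the only subtlety is being careful that the $d_i$ are actual non-negative integers (which follows from the decomposition of $\chi|_H$ into irreducibles via Maschke and the corollary characterizing equivalent representations by their characters), so that $\sum d_i^2$ is computed correctly. The conceptual content is simply that normalizing $\chi$ over $G$ leaves only a fraction $|H|/|G|$ of its "mass" available on $H$, with the deficit measured precisely by its values off $H$.
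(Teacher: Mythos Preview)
Your proof is correct and follows essentially the same route as the paper: compute $\langle \chi|_H,\chi|_H\rangle_H=\sum d_i^2$ via orthonormality of the $\psi_i$, then split $|G|=\sum_{g\in G}|\chi(g)|^2$ into the $H$-part and its complement to obtain $\sum d_i^2=[G:H]-\frac{1}{|H|}\sum_{g\notin H}|\chi(g)|^2$. The only cosmetic difference is that the paper first writes $1=\frac{|H|}{|G|}\langle\chi|_H,\chi|_H\rangle_H+\frac{1}{|G|}\sum_{g\notin H}|\chi(g)|^2$ and then rearranges, whereas you go directly; the content is identical.
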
}
\begin{proof}
Thinking of $H$ as a group we have $\langle Res_H^G\chi, Res_H^G\chi \rangle_{H}= \sum_{i=1}^k d_i^2$. Since $\chi$ is an irreducible character of $G$ we have,
\begin{eqnarray*}
 1= \langle \chi,\chi \rangle &=& \frac{1}{|G|}\sum_{g \in G} \chi(g)\overline{\chi(g)} = \frac{1}{|G|}\left(\sum_{h \in H} \chi(h)\overline{\chi(h)} + \sum_{g \notin H} \chi(g)\overline{\chi(g)} \right) \\  &=& \frac{|H|}{|G|} \langle Res_H^G\chi, Res_H^G\chi \rangle_{H} + \frac{1}{|G|} \sum_{g \notin H} \chi(g)\overline{\chi(g)}.
\end{eqnarray*}
Rewriting the above we get,
$\frac{|H|}{|G|} \langle Res_H^G\chi, Res_H^G\chi \rangle_{H}  = 1 - \frac{1}{|G|} \sum_{g \notin H} \chi(g)\overline{\chi(g)} $ which implies
$$ \sum d_i^2 =\langle Res_H^G\chi, Res_H^G\chi \rangle_{H}  = \frac{|G|}{|H|} - \frac{1}{|H|} \sum_{g \notin H} \chi(g)\overline{\chi(g)}  \leq  \frac{|G|}{|H|}.$$
Moreover equality occurs if and only if $\frac{1}{|H|} \sum_{g \notin H} \chi(g)\overline{\chi(g)} = 0$, i.e., $\sum_{g \notin H} |\chi(g)|^2 = 0$ which happens if and only if  $|\chi(g)| = 0$ and hence $\chi(g)=0$ for all $g \notin H$.
\end{proof}
\noindent We will apply the above theorem for index two subgroups and get the following,
\begin{corollary}\label{index2}
Let $G$ be a group and $H$ be a subgroup of index $2$. Let $\chi$ be an irreducible character of $G$. 
Then one of the following happens :
\begin{enumerate}
  \item $Res_H^G\chi = \psi$ is an irreducible character of $H$. This happens if and only if there exists $g \in G$ and $g\notin H$ such that $\chi(g) \neq 0$.
  \item $Res_H^G\chi = \psi_1 + \psi_2$ where $\psi_1$ and $\psi_2$ are irreducible characters of $H$. This happens  if and only if  $\chi(g) = 0$ for all $g \notin H$.
\end{enumerate}
\end{corollary}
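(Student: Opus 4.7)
The plan is to apply Theorem~\ref{charsubgroup} directly in the special case $[G:H]=2$. Writing $\chi|_H = d_1\psi_1 + \cdots + d_k\psi_k$ as a non-negative integer combination of the distinct irreducible characters of $H$, the theorem furnishes the inequality $\sum_{i=1}^k d_i^2 \leq 2$, together with the statement that equality holds if and only if $\chi(g)=0$ for every $g\notin H$.

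Next I would observe that $\chi|_H$ is nonzero, since $\chi|_H(1)=\chi(1)=\dim V \geq 1$; hence $\sum d_i^2 \geq 1$, leaving only the possibilities $\sum d_i^2 = 1$ and $\sum d_i^2 = 2$. In the first case the unique way to write $1$ as a sum of non-negative integer squares forces exactly one $d_i$ to equal $1$ and the rest to vanish, so $\chi|_H = \psi$ is itself an irreducible character of $H$. In the second case, any $d_i\geq 2$ would contribute at least $4$ to the sum, so we must have exactly two indices with $d_i=1$ and the rest zero; thus $\chi|_H = \psi_1+\psi_2$ for two distinct irreducible characters $\psi_1,\psi_2$ of $H$.

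The biconditionals then fall out of the equality clause of Theorem~\ref{charsubgroup}. Case (2) is precisely the situation where $\sum d_i^2$ attains the maximum value $[G:H]=2$, which by the theorem is equivalent to $\chi(g)=0$ for all $g\notin H$. Complementarily, case (1) corresponds to $\sum d_i^2 = 1 < 2$, i.e.\ strict inequality, which by the same clause is equivalent to the existence of some $g\in G\setminus H$ with $\chi(g)\neq 0$.

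I expect no substantial obstacle: the entire content is the arithmetic observation that $2$ admits only the partitions $1$ and $1+1$ as a sum of non-negative integer squares, combined with the equality criterion already packaged into the preceding theorem. The one small point to be careful about is noting that $\psi_1\neq \psi_2$ in case (2) (since otherwise we would have a single $d_i=2$, giving $\sum d_i^2 = 4$), and observing that $\chi|_H$ is nonzero so the degenerate possibility $\sum d_i^2 = 0$ can be discarded.
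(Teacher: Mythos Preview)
Your proposal is correct and is precisely the argument the paper intends: the corollary is stated without proof immediately after Theorem~\ref{charsubgroup}, and your derivation---specializing the inequality $\sum d_i^2 \leq [G:H]$ to the case $[G:H]=2$, enumerating the integer solutions, and invoking the equality clause for the biconditionals---is exactly how one is meant to extract it. Your observation that $\psi_1 \neq \psi_2$ in case~(2) is a useful clarification that the paper's statement leaves implicit.
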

\begin{proof}
 Let us write  $Res_H^G\chi = d_1 \psi_1 + d_2 \psi_2 + \cdots + d_k \psi_k$ then from Theorem~\ref{charsubgroup} we get $\sum d_i^2\leq 2$. Further, $\sum d_i^2 = 2$ if and only if $\chi(g)=0$ for all $g\notin H$, i.e., in this case $d_1=d_2=1$ is the solution and we get (2). Otherwise,   $\sum d_i^2 = 1$ and the only solution is $d_1=1$ which gives the first case.  
\end{proof}

\section{Character Table of $A_5$}
Now we apply the results obtained so far on $G=S_5$ and get the characters of its subgroup $H=A_5$.
We aim to write down the character table of $A_5$. For this, we will use the Corollary~\ref{index2} and the character table of $S_5$ derived earlier in the previous chapter. 
The conjugacy classes of $A_5$ and their corresponding sizes are as follows:
\begin{center}
    \begin{tabular}{c|c|c|c|c|c|}
   $|(g_i)|$ & $1$ & $20$ & $15$ & $12$ & $12$  \\
   $g_i$ & $1$ &  $(1 2 3)$ & $(1 2)(3 4)$ &  $(1 2 3 4 5)$  & $(1 3 4 5 2)$ \\    \hline
\end{tabular}
\end{center}
\vskip2mm
From the character table of $S_5$ it follows that $Res\chi_1  = Res\chi_2$, $Res\chi_3 = Res\chi_4$ and $Res\chi_6 = Res\chi_7$ are irreducible characters of $A_5$. 
Hence we get the partial character table of $A_5$ as follows.
\begin{center}
    \begin{tabular}{c|c|c|c|c|c|}
   $|(g_i)|$ & $1$ & $20$ & $15$ & $12$ & $12$  \\
   $g_i$ & $1$ &  $(1 2 3)$ & $(1 2)(3 4)$ &  $(1 2 3 4 5)$  & $(1 3 4 5 2)$ \\   \hline
   $\psi_1$ & $1$ &  $1$ & $1$ &  $1$  & $1$ \\
   $\psi_2$ & $4$ &  $1$ & $0$ &  $-1$  & $-1$ \\
   $\psi_3$ & $5$ &  $-1$ & $1$ &  $0$  & $0$ \\
   $\psi_4$ & $n_4=3$ &  $a_1$ & $a_2$ &  $a_3$  & $a_4$ \\
   $\psi_5$ & $n_5=3$ &  $b_1$ & $b_2$ &  $b_3$  & $b_4$ \\\hline
   \end{tabular}
\end{center}
\vskip1mm
We know that $1^2 + 4^2 + 5^2 +{n_4}^2 +{n_5}^2 = 60 $ and hence ${n_4}^2 +{n_5}^2 = 18$. The only possible integral solutions of this equation are $n_4 = n_5 =3$.

Now we see that the only irreducible character of $S_5$ whose restriction to $A_5$ is not irreducible is $\chi_5$. 
Since $\psi_1, \psi_2, \psi_3$  are all obtained by restriction of the characters other than $\chi_5$ it follows from Theorem~\ref{charsubgroup} that only possibly $\langle \psi_4, Res\chi_5 \rangle_{A_5} \neq 0$ and $\langle \psi_5, Res\chi_5 \rangle_{A_5} \neq 0$. Now from Corollary~\ref{index2} it follows that $Res\chi_5$ is a sum of two characters of $A_5$ and hence $Res \chi_5 = \psi_4 + \psi_5$. Thus we have $a_1 = -b_1$, $a_2 = -2-b_2$, $a_3 = 1 -b_3$ and $a_4 = 1 -b_4$.

Now we use the orthogonality relations for characters of $A_5$ and get
$\langle \psi_4, \psi_1 \rangle = 3 + 20a_1 + 15a_2 + 12a_3 + 12a_4=0$, $\langle \psi_4,\psi_2 \rangle = 12 + 20a_1 - 12a_3 - 12a_4= 0$,  and $\langle \psi_4,\psi_3 \rangle = 15 - 20a_1+ 15a_2 =0$.
Solving these equations we get $a_1 = 0 $, $a_2 = -1$ and $a_3+a_4=1$.  Hence we have the following:

\begin{center}
    \begin{tabular}{c|c|c|c|c|c|}
   $|(g_i)|$ & $1$ & $20$ & $15$ & $12$ & $12$  \\
   $g_i$ & $1$ &  $(1 2 3)$ & $(1 2)(3 4)$ &  $(1 2 3 4 5)$  & $(1 3 4 5 2)$ \\   \hline
   $\psi_1$ & $1$ &  $1$ & $1$ &  $1$  & $1$ \\
   $\psi_2$ & $4$ &  $1$ & $0$ &  $-1$  & $-1$ \\
   $\psi_3$ & $5$ &  $-1$ & $1$ &  $0$  & $0$ \\
   $\psi_4$ & $3$ &  $0$ & $-1$ &  $a_3$  & $a_4=1-a_3$ \\
   $\psi_5$ & $3$ &  $0$ & $-1$ &  $b_3=1-a_3=a_4$  & $b_4=1-a_4=a_3$ \\\hline
   \end{tabular}
\end{center}
\vskip2mm

\begin{proposition}\label{realchar}
Every element of $A_5$ is conjugate to its own inverse. Hence the entries of the character table are real numbers. 
\end{proposition}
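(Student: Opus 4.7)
The plan is to verify the first assertion (every element of $A_5$ is conjugate in $A_5$ to its inverse) class by class on the five conjugacy classes listed above, and then to deduce reality of the character values from Proposition~\ref{character}(2), which says $\chi(g^{-1}) = \overline{\chi(g)}$. Once $g$ is $A_5$-conjugate to $g^{-1}$, part (3) of the same proposition gives $\chi(g) = \chi(g^{-1})$, and comparing these two equalities forces $\chi(g) \in \mathbb{R}$ for every irreducible character $\chi$.

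The easy cases are the first three classes: the identity is self-inverse; any double transposition $(ab)(cd)$ is self-inverse; and for a $3$-cycle $(abc)$ the inverse $(acb)$ is conjugate to $(abc)$ already in $A_5$ because the $S_5$-centralizer of $(123)$ is $\langle(123)\rangle \times \langle(45)\rangle$, which contains the odd element $(45)$. By the standard splitting criterion this forces the $S_5$-class of $3$-cycles to remain a single class in $A_5$, and $(abc), (acb)$ are already $S_5$-conjugate, hence $A_5$-conjugate.

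The one genuinely delicate case is the $5$-cycles, because here the $S_5$-class of size $24$ does split into two $A_5$-classes of size $12$ (the $S_5$-centralizer $\langle(12345)\rangle$ consists entirely of even permutations). I must therefore check that $(12345)^{-1} = (15432)$ lies in the \emph{same} $A_5$-class as $(12345)$, not in the other $5$-cycle class. The plan is simply to exhibit an even permutation realizing this conjugation: using the identity $\sigma(a_1\, a_2\, a_3\, a_4\, a_5)\sigma^{-1} = (\sigma(a_1)\, \sigma(a_2)\, \sigma(a_3)\, \sigma(a_4)\, \sigma(a_5))$, one can take $\sigma = (25)(34) \in A_5$, which sends the tuple $(1,2,3,4,5)$ to $(1,5,4,3,2)$, so $\sigma(12345)\sigma^{-1} = (15432) = (12345)^{-1}$. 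This is the only step that requires a small computation; I would then remark that the same conjugation, applied to the representative $(13524)$ of the other $A_5$-class of $5$-cycles, shows that class is also closed under inversion.

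The main (mild) obstacle is just producing the even conjugator for the $5$-cycles; everything else is either immediate or reduces to the non-splitting observation for $3$-cycles. With all five classes handled, the final sentence of the proposition follows from the two-line argument $\chi(g) = \chi(g^{-1}) = \overline{\chi(g)}$.
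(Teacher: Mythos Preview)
Your proof is correct and follows essentially the same approach as the paper: verify class by class, exhibit an explicit even conjugator for the $5$-cycle classes, and conclude via $\chi(g)=\chi(g^{-1})=\overline{\chi(g)}$. The only cosmetic differences are your choice of conjugator $(25)(34)$ in place of the paper's $(15)(24)$ and $(12)(35)$, and your use of $(13524)$ rather than the paper's representative $(13452)$ for the second $5$-cycle class; your argument for the $3$-cycles via the splitting criterion is more detailed than the paper's, which simply declares that case ``clear''.
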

\begin{proof}
Clearly, it is enough to prove that the representatives of the conjugacy classes are conjugate to their own inverse. It is clear for the element $1, (123)$ and $(12)(34)$. For others, we check that
\begin{eqnarray*}
  (1 2 3 4 5)^{-1} &=& (5 4 3 2 1) = (1 5) (2 4) (1 2 3 4 5) (1 5) (2 4) \\
  (1 3 4 5 2)^{-1} &=& (2 5 4 3 1) = (1 2) (3 5) (1 3 4 5 2) (1 2) (3 5) 
\end{eqnarray*}
Now we know that $\chi(g^{-1}) = \overline{\chi(g)}$ and $g$ being conjugate to $g^{-1}$ this is also equal to $\chi(g)$. Hence $\chi(g)=\overline{\chi(g)}$ gives $\chi(g)\in\mathbb R$ for all $g\in A_5$.  
\end{proof}
The above proposition implies that $a_3,a_4,b_3$ and $b_4$ are real numbers.
From $\langle \chi_4, \chi_4 \rangle = 1$ we get $a_3 ^2 + a_4 ^2 =3$. Substituting $a_4=1-a_3$ we get $a_3^2-a_3-1=0$. And the solutions are 
$$a_3 = \frac{1 + \sqrt{5}}{2} =b_4, a_4 = \frac{1 - \sqrt{5}}{2}= b_3 $$ 
or
$$a_3 = \frac{1 - \sqrt{5}}{2}=b_4, a_4= \frac{1 + \sqrt{5}}{2}=b_3 .$$

 Since the values of $\psi_4$ and $\psi_5$ on other conjugacy classes are the same, both the above solutions would give the same set of irreducible characters. Hence without loss of generality, we may take the first set of solutions. This gives the complete character table of $A_5$ as follows:

\begin{center}
   \textit{Character Table of $A_5$}\\\vskip2mm
    \begin{tabular}{c|c|c|c|c|c|}
   $|(g_i)|$ & $1$ & $20$ & $15$ & $12$ & $12$  \\
   $g_i$ & $1$ &  $(1 2 3)$ & $(1 2)(3 4)$ &  $(1 2 3 4 5)$  & $(1 3 4 5 2)$ \\    \hline 
   $\psi_1$ & $1$ &  $1$ & $1$ &  $1$  & $1$ \\
   $\psi_2$ & $4$ &  $1$ & $0$ &  $-1$  & $-1$ \\
   $\psi_3$ & $5$ &  $-1$ & $1$ &  $0$  & $0$ \\
   $\psi_4$ & $3$ &  $0$ & $-1$ &  $\frac{1 + \sqrt{5}}{2}$  & $\frac{1 - \sqrt{5}}{2}$ \\
   $\psi_5$ & $3$ &  $0$ & $-1$ &  $\frac{1 - \sqrt{5}}{2}$  & $\frac{1 + \sqrt{5}}{2}$ \\\hline
   \end{tabular}
\end{center}
\vskip5mm

\begin{exercise}
Prove that every element of $S_n$ is conjugate to its own inverse and hence character table consists of real numbers.
\end{exercise}
{\bf Remark : } In fact more is true that every element of $S_n$ is conjugate to all those powers of itself which generates the same subgroup (called rational conjugacy). Hence it is true that characters of $S_n$ always take value in integers. However, this is not true for $A_n$, for example, check $A_4$ and $A_5$. In general, for $A_n$, they may not be even real-valued.

\chapter{Central Characters}

Let $G$ be a finite group. Let $\rho_1,\ldots,\rho_h$ be all irreducible $\mathbb 
C$-representations of $G$ of dimension $n_1,\cdots, n_h$ respectively with corresponding characters 
$\chi_1,\cdots,\chi_h$. We also fix $g_1, g_2, \ldots, g_h$ to be a representative of conjugacy 
classes of $G$ with the respective conjugacy classes size $r_1, \ldots, r_h$. 

For any give representation $\rho \colon G\rightarrow \\GL(V)$ we can define an algebra 
homomorphism 
$\tilde\rho\colon \mathbb C[G] \rightarrow \End(V)$ by $\displaystyle\sum_g\alpha_g g \mapsto 
\sum_{g}\alpha_g\rho(g)$. 
We know that $\mathcal Z(\mathbb C[G])$, the center of $\mathbb C[G]$, is spanned by the elements $c_{g_1}, \ldots, c_{g_h}$
where 
$$c_{g_i}=\sum_{t\in G \atop t = sg_{i}s^{-1}} t$$
i.e., the sum of all conjugates of $g_i$. Here $g_i$ are representatives of the conjugacy classes.
\begin{exercise}
\begin{enumerate}
 \item Show that $\tilde\rho$ is an algebra homomorphism, that is, it is a vector space 
homomorphism as well as ring homomorphism.
\item Let $\sum_g\alpha_g g\in \mathcal Z(\mathbb C[G])$ then $\alpha_g=\alpha_{sgs^{-1}}$ for any 
$s\in G$.
\item Show that the center of $M_n(k)$ (and $\End(V)$) is the set of all scalar matrices (works 
over any field $k$).
\end{enumerate} 
\end{exercise}
\begin{exercise}
Take $\rho$ to be a $1$-dimensional irreducible representation of a cyclic group. Show that 
$\tilde\rho$ is not injective even if $\rho$ is so.  
\end{exercise}

\noindent We wish to understand how $\tilde\rho$ behaves on the center. 

\begin{proposition}
Let $\rho$ be an irreducible representation and $z\in \mathcal Z(\mathbb C[G])$. Then 
$\tilde\rho(z) = \lambda.I$ for some $\lambda \in \mathbb C$ ($\lambda$ depends on $z$) and $I$ is 
the identity transformation. Further, $\lambda=\frac{Tr(\tilde\rho(z))}{\dim(\rho)}$.
\end{proposition}
\begin{proof}
We claim that $\tilde\rho(z)\in\End(V)$ is a $G$-map and then use Schur's Lemma. Let $z= 
\sum_g \alpha_g g$ then $\alpha_g=\alpha_{sgs^{-1}}$ for any $s\in G$ (as $z$ is in the center). 
Then,
\begin{eqnarray*}
\tilde\rho(z)(\rho(t)v) &=& \tilde\rho\left(\sum_g\alpha_g g\right)(\rho(t)v)= \sum_g\alpha_g 
\rho(g)(\rho(t)v) = \rho(t)\sum_g\alpha_g\rho(t^{-1}gt)(v) \\
&=& \rho(t)\sum_u\alpha_{tut^{-1}}\rho(u)v=\rho(t)\tilde\rho(z)v.
\end{eqnarray*}
In the last step, we use $\alpha_u=\alpha_{tut^{-1}}$. Since $\rho$ is irreducible,  
Corollary~\ref{corschur} (Schur's Lemma) implies that $\tilde\rho(z)=\lambda.I$ for some  
$\lambda\in\mathbb C$.
\end{proof}

With the notation as above let us consider algebra homomorphisms $\tilde\rho_i$ corresponding to 
the irreducible representations $\rho_i$. In the light of above proposition let us denote 
$\tilde\rho_i(c_{g_j})=\lambda_{ij}I_{n_i}$ where $\lambda_{ij}\in \mathbb C$ and 
$I_{n_i}$ denotes the identity transformation. Further, we can take traces on both sides and get, 
$$
n_i\lambda_{ij} =tr(\tilde\rho_i(c_{g_j})) =\sum_{t\in G \atop t=sg_js^{-1}} 
tr(\rho_i(t))=r_j\chi_i(g_j)
$$ 
where $r_j$ is the number of conjugates of $g_j$. 
This gives, 
$$
\lambda_{ij}=\frac{r_j\chi_i(g_j)}{n_i} = r_j \frac{\chi_i(g_j)}{\chi_i(1)}.
$$
Thus, each irreducible representation $\rho_i$ determines $\lambda_i$ a function called {\bf 
central character} given the map $\lambda_i \colon G\rightarrow \mathbb 
C$ defined on the conjugacy classes by $\lambda_i(g_j)= \lambda_{ij}$.
{\color{teal}
\begin{definition}
To an irreducible representation $\rho$, the map $\lambda \colon G\rightarrow \mathbb 
C$ defined on the conjugacy classes by $\lambda(g_j)= r_j \frac{\chi(g_j)}{\chi(1)}$ is said to 
be the {\bf central character} associated to the irreducible representation $\rho$. Clearly, it is 
a class function.
\end{definition}} 

The next proposition says that the values of central characters are algebraic integers. 
\begin{proposition}\label{lambdaij}
Each $\lambda_{ij}$ is an algebraic integer.
\end{proposition}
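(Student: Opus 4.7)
The plan is to show that each $\lambda_{ij}$ lies in a subring of $\mathbb C$ that is finitely generated as a $\mathbb Z$-module, which is the standard criterion for being an algebraic integer.

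First I would work inside the integral group ring $\mathbb Z[G]$ rather than $\mathbb C[G]$. The key observation is that the class sums $c_{g_1},\ldots,c_{g_h}$ form a $\mathbb Z$-basis of the center $\mathcal Z(\mathbb Z[G])$, and they multiply among themselves with non-negative integer structure constants: writing
\[
c_{g_j}\cdot c_{g_k}\;=\;\sum_{l=1}^{h} m_{jkl}\, c_{g_l},
\]
the coefficient $m_{jkl}$ counts the number of ordered pairs $(x,y)$ with $x$ a conjugate of $g_j$, $y$ a conjugate of $g_k$, and $xy=g_l$ (this count is constant on the conjugacy class of $g_l$, which is why the product is again a $\mathbb Z$-linear combination of class sums). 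In particular $m_{jkl}\in\mathbb Z_{\geq 0}$.

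Next, since $\tilde\rho_i\colon \mathbb C[G]\to \End(V_i)$ is an algebra homomorphism and $\tilde\rho_i(c_{g_j})=\lambda_{ij}\,Id_{n_i}$ by the previous proposition, applying $\tilde\rho_i$ to the displayed identity gives
\[
\lambda_{ij}\lambda_{ik}\;=\;\sum_{l=1}^{h} m_{jkl}\,\lambda_{il}.
\]
Therefore the $\mathbb Z$-submodule $A_i\subset \mathbb C$ spanned by $\lambda_{i1},\ldots,\lambda_{ih}$ is closed under multiplication. Adjoining $1$, we obtain a subring $\mathbb Z\cdot 1+A_i$ of $\mathbb C$ that is finitely generated as a $\mathbb Z$-module and contains each $\lambda_{ij}$.

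Finally I would invoke the standard criterion from commutative algebra: an element $\alpha\in\mathbb C$ is an algebraic integer if and only if it lies in some subring of $\mathbb C$ which is finitely generated as a $\mathbb Z$-module (equivalently, $\mathbb Z[\alpha]$ is contained in such a subring, so $\alpha$ satisfies a monic polynomial with integer coefficients coming from the Cayley--Hamilton theorem applied to multiplication by $\alpha$ on that module). Applying this to $\alpha=\lambda_{ij}$ and the subring above yields the conclusion. The only conceptual step that requires care is the verification that the class-sum multiplication has integer structure constants; the rest is then a clean consequence of Schur's Lemma (already used to produce the scalar $\lambda_{ij}$) together with the finiteness criterion for algebraic integers.
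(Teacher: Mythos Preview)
Your proof is correct and follows essentially the same route as the paper: show that the class sums multiply with integer structure constants, apply $\tilde\rho_i$ to obtain $\lambda_{ij}\lambda_{ik}=\sum_l m_{jkl}\lambda_{il}$, and conclude via the finitely-generated-$\mathbb Z$-module criterion for algebraic integers. A tiny remark: you need not adjoin $1$ to $A_i$, since the conjugacy class of the identity gives $\lambda_{i1}=r_1\chi_i(1)/n_i=1$, so $1\in A_i$ already.
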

\begin{proof} 
Let us consider $M=c_{g_1}\mathbb Z \oplus \cdots \oplus c_{g_h}\mathbb Z\subset \mathcal Z(\mathbb C[G]) 
= c_{g_1}\mathbb C \oplus \cdots \oplus c_{g_h}\mathbb C$.
Clearly, $M$ is a $\mathbb Z$-submodule. It is a subring also (multiply $c_{g_j},c_{g_k}$ by 
writing them as elements in $\mathbb C[G]$). Take $c_{g_j},c_{g_k}\in \mathcal Z(\mathbb C[G])$. 
Then $c_{g_j}c_{g_k}\in \mathcal Z(\mathbb C[G])$, in fact $c_{g_j}c_{g_k}\in M$. 
Hence we can write $c_{g_j}c_{g_k}=\displaystyle\sum_{l=1}^{h}a_{jkl}c_{g_l}$ where $a_{jkl}$ are 
integers. 
By applying $\tilde\rho_i$ we get, 
$$
(\lambda_{ij}I_{n_i})(\lambda_{ik}I_{n_i})=\tilde\rho_i(c_{g_j})\tilde\rho_i(c_{g_k})=\tilde\rho_i(
c_{g_j}c_{g_k})
=\sum_{l=1}^{h}a_{jkl}\lambda_{il}I_{n_i}.
$$
This gives $\lambda_{ij}\lambda_{ik}=\sum_{l=1}^{h}a_{jkl}\lambda_{il}$ where each $a_{jkl}\in \mathbb Z$. 

Now we take $N=\lambda_{i1}\mathbb Z\oplus\cdots\oplus \lambda_{ih}\mathbb Z\subset \mathbb C$ which is a finitely
generated $\mathbb Z$-module and $\lambda_{ij}N\subset N$ for all $j$.
This implies $\lambda_{ij}$ is an algebraic integer. 
\end{proof}

\begin{lemma}\label{charint}
For any $s\in G$ and $\chi$ character of a representation, $\chi(s)$ is an algebraic integer.
\end{lemma}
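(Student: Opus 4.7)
The plan is to exploit the fact that $\rho(s)$ has finite order and therefore very constrained eigenvalues. Specifically, since $s \in G$ has finite order, say $d$, we have $\rho(s)^d = \rho(s^d) = \rho(1) = Id$, so the minimal polynomial of $\rho(s)$ divides $X^d - 1$.

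First I would invoke Corollary~\ref{diagonal} to choose a basis in which $\rho(s)$ is diagonal, say $\rho(s) = \diag\{\omega_1, \ldots, \omega_n\}$. Then each $\omega_j$ satisfies $\omega_j^d = 1$, so each $\omega_j$ is a root of the monic integer polynomial $X^d - 1 \in \mathbb{Z}[X]$, hence an algebraic integer. Consequently,
\[
\chi(s) = tr(\rho(s)) = \omega_1 + \omega_2 + \cdots + \omega_n.
\]

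Next I would appeal to the standard fact from algebraic number theory that the set of algebraic integers in $\mathbb{C}$ forms a subring, so any finite sum of algebraic integers is again an algebraic integer. This immediately yields that $\chi(s)$ is an algebraic integer.

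The only subtle point — and the one I would highlight as the main thing to verify — is the closure of algebraic integers under addition. If the exposition has not already recorded this, I would briefly justify it by the module-theoretic criterion already used in Proposition~\ref{lambdaij}: if $\alpha_1, \ldots, \alpha_n$ are algebraic integers, then $\mathbb{Z}[\alpha_1, \ldots, \alpha_n]$ is a finitely generated $\mathbb{Z}$-module, and any element (in particular $\alpha_1 + \cdots + \alpha_n$) of such a module is killed by a monic integer polynomial. Thus $\chi(s)$ is an algebraic integer, completing the proof.
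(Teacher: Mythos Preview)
Your argument is correct and follows essentially the same route as the paper: diagonalize $\rho(s)$ via Corollary~\ref{diagonal}, observe the diagonal entries are $d$th roots of unity and hence algebraic integers, and conclude that their sum $\chi(s)$ is an algebraic integer. The paper simply asserts closure of algebraic integers under addition without the extra module-theoretic justification you supply.
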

\begin{proof}
Let $s\in G$ be of order $d$ in $G$. Then $\rho(s)$ is of the order less than or equal to $d$. Since the
field is $\mathbb C$ we can choose a basis such that the matrix of $\rho(s)$, say $A$, becomes 
diagonal (see~\ref{diagonal} and also proof in~\ref{character}).
Clearly, $A^d=1$ implies diagonal elements are the root of the polynomial $X^d-1$ and hence are algebraic 
integer (being roots of unity). 
As a sum of algebraic integers is again an algebraic integer we get a sum of diagonals of $A$ which is $\chi(s)$ is an
algebraic integer.
\end{proof}

Now we can prove the main theorem of this chapter,
{\color{red}
\begin{theorem}
The order of an irreducible representation divides the order of the group, i.e., $n_i$ divides $|G|$ for all $i$.
\end{theorem}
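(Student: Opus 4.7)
The plan is to combine the two algebraic-integrality results that precede the theorem (Proposition~\ref{lambdaij} and Lemma~\ref{charint}) with the orthogonality relation $\langle\chi_i,\chi_i\rangle=1$ to show that $|G|/n_i$ is an algebraic integer; since it is manifestly rational, it must then be an ordinary integer, which is exactly the divisibility claim.

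First, I would start from the orthonormality of irreducible characters, which gives
$$
1=\langle\chi_i,\chi_i\rangle=\frac{1}{|G|}\sum_{t\in G}\chi_i(t)\overline{\chi_i(t)}.
$$
Grouping the sum by conjugacy classes, with representatives $g_1,\ldots,g_h$ and class sizes $r_1,\ldots,r_h$, this becomes $\sum_{j=1}^h r_j\chi_i(g_j)\overline{\chi_i(g_j)}=|G|$. Dividing both sides by $n_i=\chi_i(1)$ I obtain
$$
\frac{|G|}{n_i}=\sum_{j=1}^h \frac{r_j\chi_i(g_j)}{n_i}\,\overline{\chi_i(g_j)}=\sum_{j=1}^h \lambda_{ij}\,\overline{\chi_i(g_j)},
$$
where $\lambda_{ij}$ is precisely the scalar introduced before Proposition~\ref{lambdaij}.

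Next, I would assemble the integrality ingredients. By Proposition~\ref{lambdaij} each $\lambda_{ij}$ is an algebraic integer. By Lemma~\ref{charint} each $\chi_i(g_j)$ is an algebraic integer, and since complex conjugation sends algebraic integers to algebraic integers (indeed $\overline{\chi_i(g_j)}=\chi_i(g_j^{-1})$, to which the same lemma applies), each $\overline{\chi_i(g_j)}$ is also an algebraic integer. Because the set of algebraic integers is a subring of $\mathbb{C}$, the finite sum on the right hand side is an algebraic integer as well. Hence $|G|/n_i$ is an algebraic integer.

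Finally, I would close the argument by noting that $|G|/n_i$ is a rational number, and a rational algebraic integer is necessarily a (rational) integer; thus $n_i\mid |G|$. The main substantive step is the middle one, i.e., recognising that the orthonormality identity can be rewritten as $\sum_j\lambda_{ij}\overline{\chi_i(g_j)}$ so that the prepared integrality machinery applies; once this is in place the divisibility is essentially automatic. The only subtlety to be careful about is the standard fact that $\mathbb{Q}\cap\overline{\mathbb{Z}}=\mathbb{Z}$, which I would invoke at the last step.
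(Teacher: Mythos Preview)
Your proof is correct and follows essentially the same route as the paper: rewrite the orthonormality relation $\langle\chi_i,\chi_i\rangle=1$ as $\frac{|G|}{n_i}=\sum_j\lambda_{ij}\,\chi_i(g_j^{-1})$, invoke Proposition~\ref{lambdaij} and Lemma~\ref{charint} to see the right-hand side is an algebraic integer, and conclude from $\mathbb{Q}\cap\overline{\mathbb{Z}}=\mathbb{Z}$. The only cosmetic difference is that the paper works with $\chi_i(t^{-1})$ directly rather than passing through $\overline{\chi_i(t)}$.
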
}
\begin{proof} 
Let $\rho_i$ be an irreducible representation of degree $n_i$ with character $\chi_i$.
From the orthogonality relations we have, $1= \langle \chi_i, \chi_i \rangle = 
\frac{1}{|G|}\sum_{t\in G}\chi_i(t)\chi_i(t^{-1})$. Thus, 
\begin{eqnarray*}
|G| &=&\sum_{j=1}^{h} r_j\chi_i(g_j)\chi_i(g_j^{-1}) 
= \sum_{j=1}^{h}n_i\lambda_{ij}\chi_i(g_j^{-1}).
\end{eqnarray*}
After re-writing we get,  $\displaystyle\sum_{j=1}^{h}\lambda_{ij}\chi_i(g_j^{-1}) 
=\frac{|G|}{n_i}$. The left side of this equation is an algebraic integer (using 
Proposition~\ref{lambdaij} and Lemma~\ref{charint}) and the right side is a rational number. Hence 
$\frac{|G|}{n_i}$ is an algebraic integer as well as an algebraic number, which implies must be an integer. 
This gives the required result, $n_i$ divides $|G|$. 
\end{proof}

\section{Algebraic Integers}
Read about this topic.

\chapter{Burnside's $pq$ Theorem}
As an application of the character theory, we prove Burnside's $pq$ Theorem (see the original 
article~\cite{bu}). Let $p\neq q$ be 
primes. We know that any group of order $p$ is cyclic, of order $p^2$ is Abelian and $p^a$ is 
solvable. In this chapter, we prove that any group of order $p^aq^b$ is also solvable. In general 
groups of order divisible by three different primes need not be solvable (think of $A_5$).

\begin{exercise}\label{pgroups}
A group of order $p^r$, for some $r$, is called a $p$-group.
Let $G$ be a $p$ group of order $p^r$.
Prove the following:
\begin{enumerate}
\item The center of $G$ is non-trivial.
\item $G$ has a subgroup of order $p^s$ for all $0\leq s\leq r$.
\item Prove that for $r=2$, $G$ is always Abelian. 
\item Let $G$ be acting on a finite set $X$. Show that 
$$ |X| \equiv |X^G| \imod p$$
where $X^G=\{x\in X\mid gx=x\forall g\in G\}$.
\item Use the above to show that $ |\mathcal Z(G)| \equiv 0 \imod p$.
\item Show that $G$ is solvable.
\end{enumerate}
\end{exercise}

\noindent We continue with the notation in the previous chapter and recall,
\begin{itemize}
\item the values of central character $\lambda_{ij}=r_j\frac{\chi_i(g_j)}{n_i}$ are algebraic 
integers.
\item the values of characters $\chi_i(t)$ are algebraic integers.
\end{itemize}

\noindent Let us begin with a basic result. 
\begin{lemma}
With the notation as above suppose $r_j$ and $n_i$ are relatively prime. Then, either $\rho_i(g_j)$ is 
in the center of $\rho_i(G)$ or $tr(\rho_i(g_j)) = \chi_i(g_j)=0$.
\end{lemma}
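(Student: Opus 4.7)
The plan is to show that $\alpha := \chi_i(g_j)/n_i$ is an algebraic integer of absolute value at most $1$, and then use a norm argument to force $\alpha = 0$ or $|\alpha|=1$; the two cases will correspond precisely to the two alternatives in the conclusion.

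First I would use B\'ezout's identity. Since $\gcd(r_j, n_i)=1$, there exist integers $a,b$ with $a r_j + b n_i = 1$. Multiplying through by $\chi_i(g_j)/n_i$ gives
\[
\frac{\chi_i(g_j)}{n_i} \;=\; a\,\frac{r_j\chi_i(g_j)}{n_i} + b\,\chi_i(g_j) \;=\; a\lambda_{ij} + b\chi_i(g_j).
\]
By the previous lemma and Proposition~\ref{lambdaij} both $\lambda_{ij}$ and $\chi_i(g_j)$ are algebraic integers, so $\alpha = \chi_i(g_j)/n_i$ is too.

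Next I would estimate $|\alpha|$. By Corollary~\ref{diagonal}, $\rho_i(g_j)$ diagonalizes with eigenvalues $\omega_1,\ldots,\omega_{n_i}$ which are roots of unity (say of order dividing $d = \operatorname{ord}(g_j)$). Thus
\[
|\alpha| \;=\; \frac{1}{n_i}\Big|\sum_{m=1}^{n_i}\omega_m\Big| \;\leq\; \frac{1}{n_i}\sum_{m=1}^{n_i}|\omega_m| \;=\; 1,
\]
with equality in the triangle inequality if and only if all $\omega_m$ are equal.

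The key step is to extend this bound to all Galois conjugates. Note $\alpha \in \mathbb{Q}(\zeta_d)$, a Galois extension of $\mathbb{Q}$. For any $\sigma \in \operatorname{Gal}(\mathbb{Q}(\zeta_d)/\mathbb{Q})$ we have $\sigma(\zeta_d) = \zeta_d^k$ for some $k$ coprime to $d$, so $\sigma$ sends each $\omega_m$ to $\omega_m^k$, which is still a root of unity. Hence $\sigma(\alpha) = \frac{1}{n_i}\sum_m \omega_m^k$ is again an average of $n_i$ roots of unity, and the same estimate gives $|\sigma(\alpha)| \leq 1$. The norm
\[
N(\alpha) \;=\; \prod_{\sigma} \sigma(\alpha)
\]
is a rational algebraic integer, hence an ordinary integer, and $|N(\alpha)| \leq 1$.

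Finally I would split into cases. If $N(\alpha) = 0$ then some $\sigma(\alpha) = 0$, hence $\alpha = 0$ (apply $\sigma^{-1}$), giving $\chi_i(g_j) = 0$. Otherwise $|N(\alpha)| = 1$, which forces $|\sigma(\alpha)| = 1$ for every $\sigma$; in particular $|\alpha| = 1$, so equality holds in the triangle inequality above, and all eigenvalues $\omega_m$ of $\rho_i(g_j)$ coincide, say $\omega_m = \omega$ for all $m$. Then $\rho_i(g_j) = \omega\cdot\operatorname{Id}$ is a scalar matrix, which commutes with every element of $GL(V)$ and in particular lies in the center of $\rho_i(G)$. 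The main obstacle I anticipate is the Galois-conjugation step: one must recognise that each conjugate $\sigma(\alpha)$ is again an average of $n_i$ roots of unity (indeed $\sigma(\chi_i(g_j)) = \chi_i(g_j^k)$), which is exactly what lets the triangle inequality apply uniformly across all conjugates.
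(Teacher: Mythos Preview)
Your proof is correct and follows essentially the same route as the paper: B\'ezout to get that $\alpha=\chi_i(g_j)/n_i$ is an algebraic integer, the triangle inequality on the eigenvalues (roots of unity) to bound $|\sigma(\alpha)|\le 1$ for every Galois conjugate, and the norm argument to force $\alpha=0$ or $|\alpha|=1$. The only cosmetic difference is the order of the case split: the paper first separates off the case $|\alpha|=1$ and then, assuming $|\alpha|<1$, deduces $|N(\alpha)|<1$ and hence $N(\alpha)=0$; you instead carry the uniform bound $|\sigma(\alpha)|\le 1$ through to $|N(\alpha)|\le 1$ and split on whether the integer $N(\alpha)$ is $0$ or $\pm 1$.
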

\begin{proof} As in the proof of Proposition~\ref{character} and Lemma~\ref{charint} we can choose a 
basis such that the matrix of $\rho_i(g_j) = \diag\{\omega_1,\cdots,\omega_{n_i}\}$ and hence 
$\chi_i(g_j)= \omega_1+\cdots+\omega_{n_i}$ where $\omega_k$'s are $d$-th root of unity (here $d$ 
is the order of $g_j$). 
Now, $|\omega_1+\cdots+\omega_{n_i}|\leq 1+\cdots+1=n_i$, hence $|\frac{\chi_i(g_j)}{n_i}|\leq 1$.

In the case $|\frac{\chi_i(g_j)}{n_i}|= 1$ we must have $\omega_1=\omega_2=\ldots=\omega_{n_i}$. 
This implies that the matrix of $\rho_i(g_j)= \diag\{\omega_1,\cdots,\omega_{1}\}$ is central, and 
hence in this case $\rho_i(g_j)$ belongs in the center of $\rho_i(G)$.

Now, suppose that $|\frac{\chi_i(g_j)}{n_i}|< 1$. Let us denote $\alpha :=\frac{\chi_i(g_j)}{n_i}$. 
We will show that $\alpha=0$. Since $r_j$ and $n_i$ are relatively prime we can find integers 
$l,m\in \mathbb Z$ such that $r_jl+n_im = 1$. Then, $\lambda_{ij}=r_j\alpha$ gives 
$l\lambda_{ij} = (1-n_im)\alpha=\alpha - m\chi_i(g_j)$.
Since $\lambda_{ij}$ and $\chi_{i}(g_j)$ both are algebraic integers (see 
Proposition~\ref{lambdaij} and Lemma~\ref{charint}) we get $\alpha$ is an algebraic integer. Now we 
use a little bit of Galois Theory (see Section 14.5~\cite{df}) of Cyclotomic extension.  
Let $\zeta$ be a primitive $d$-th root of unity and let us consider the Galois extension 
$K=\Q(\zeta)$ of $\mathbb Q$. 
Let $\omega_k=\zeta^{a_k}$ then we can write $\alpha= \frac{1}{n_i}(\omega_1 +\cdots + 
\omega_{n_i})= \frac{1}{n_i}(\zeta^{a_1}+\cdots + 
\zeta^{a_{n_i}})$.
For $\sigma\in \gal(K/\mathbb Q)$ the element $\sigma(\alpha)$ is also of the same kind, and hence 
$|\sigma(\alpha)| \leq 1$. 
This implies that the norm of $\alpha$ defined by $N(\alpha) = \displaystyle \prod_{\sigma\in 
\gal(K/\mathbb Q)}\sigma(\alpha)$ has $|N(\alpha)|<1$. 
Since $\alpha$ is an algebraic integer so are $\sigma(\alpha)$, and hence the product $N(\alpha)$ is 
an algebraic integer. 
Since $N(\alpha)$ is also invariant under all $\sigma\in\gal(K/\mathbb Q)$ it is a rational number 
hence it must be an integer. However as $|N(\alpha)|< 1$ this gives $N(\alpha)=0$. This eventually 
gives the required result $\chi_i(g_j)=n_i \alpha=0$.
\end{proof}

In what follows we may assume $G$ is non-trivial and of order involving at least two different 
primes.
\begin{proposition}
Let $G$ be a finite group and $C$ be a conjugacy class of $g\in G$.
If $|C|= p^r$, where $p$ is a prime and $r\geq 1$, then there exists a non-trivial irreducible 
representation $\rho$ of $G$ such that $\rho(C)$ is contained in the centre of $\rho(G)$. 
In particular, $G$ is not a simple group.
\end{proposition}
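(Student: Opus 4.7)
The plan is to use the column orthogonality relation for the character table together with the preceding lemma and the algebraic integer machinery to force the existence of such a $\rho$, and then draw the consequence about normal subgroups.

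First I would apply the column orthogonality relation (second part of the displayed proposition just before the character table) at the pair $(g,1)$, which is a pair of non-conjugate elements since $|C|=p^r\geq p>1$ forces $g\neq 1$. This yields
\[
\sum_{i=1}^{h}\chi_i(g)\overline{\chi_i(1)}=\sum_{i=1}^{h}n_i\chi_i(g)=0,
\]
and separating the trivial character $\chi_1$ (with $n_1=1$, $\chi_1(g)=1$) gives
\[
1+\sum_{i=2}^{h}n_i\chi_i(g)=0.
\]

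Next I would argue by contradiction: suppose no nontrivial irreducible $\rho_i$ satisfies $\rho_i(g)\in\mathcal Z(\rho_i(G))$. Since $|C|=p^r$, the hypothesis $\gcd(r_j,n_i)=1$ of the preceding lemma is equivalent to $p\nmid n_i$. Hence for every $i\geq 2$, either $p\mid n_i$, or by the lemma $\chi_i(g)=0$. Substituting this into the relation above kills the terms with $\chi_i(g)=0$, leaving only indices $i\geq 2$ with $p\mid n_i$, so that
\[
-\frac{1}{p}=\sum_{\substack{i\geq 2 \\ p\mid n_i}}\frac{n_i}{p}\,\chi_i(g).
\]
Here each $n_i/p$ is an integer, and each $\chi_i(g)$ is an algebraic integer by Lemma~\ref{charint}. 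Therefore the right-hand side is an algebraic integer, while the left-hand side is a rational non-integer. This is the main step where the argument converges, and I expect it to be the crux of the proof: pairing the arithmetic of $|C|=p^r$ with the lemma's dichotomy so that only the ``bad'' summands survive, and then collapsing via the algebraic integer/rational contradiction (which is the same mechanism used to prove $n_i\mid |G|$).

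The contradiction produces a nontrivial irreducible $\rho$ with $\rho(g)\in\mathcal Z(\rho(G))$; conjugating, $\rho(hgh^{-1})=\rho(h)\rho(g)\rho(h)^{-1}=\rho(g)$, so indeed $\rho(C)\subset\mathcal Z(\rho(G))$. For the ``in particular'' clause, I would examine $\ker(\rho)$. If $\ker(\rho)\neq\{1\}$, then since $\rho$ is nontrivial $\ker(\rho)\neq G$, giving a proper nontrivial normal subgroup. If instead $\rho$ is faithful, then $\rho(g)$ being central in $\rho(G)$ forces $g\in\mathcal Z(G)$, so the conjugacy class of $g$ is $\{g\}$, contradicting $|C|=p^r\geq p\geq 2$. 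Hence the faithful case is impossible and $G$ has a nontrivial proper normal subgroup, so $G$ is not simple.
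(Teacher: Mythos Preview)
Your argument is correct and follows essentially the same route as the paper: derive $1+\sum_{i\geq 2}n_i\chi_i(g)=0$ (you cite column orthogonality; the paper phrases it via the regular character, which is the same relation), then use the lemma's dichotomy together with $|C|=p^r$ to reduce to $-1/p$ being an algebraic integer. The only cosmetic difference is in the ``not simple'' clause: the paper, in the faithful case, points to the nontrivial center of $\rho(G)\cong G$ as a normal subgroup, whereas you observe that faithfulness would force $g\in\mathcal Z(G)$ and hence $|C|=1$, contradicting $|C|\geq p$---your version is a touch cleaner, but the content is the same.
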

\begin{proof} 
On contrary let us assume that $\rho_i(C)$ is not contained in the center of $\rho_i(G)$ for all 
irreducible representations $\rho_i$ of $G$. Then, from previous Lemma if $(p^r, n_i)=1$, i.e, 
$p\nmid n_i$, then we must have $\chi_i(g)=0$ for $g\in C$.

Consider the character of the regular representation $\chi_{reg} = \sum_{i=1}^h n_i\chi_i$. 
Then for any $1\neq s\in G$ we have 
$$
0=\chi_{reg}(s)=\sum_{i=1}^{h}n_i\chi_i(s)=1+\sum_{i=2}^{h}n_i\chi_i(s).
$$ 
Let us take $g\in C$ (note that $g\neq 1$ since $|C|> 1$). Then  $ \sum_{i=2}^{h}n_i\chi_i(g)=-1$ 
which we may re-write after dividing by $p$  as $\sum_{i=2}^{h}\frac{n_i}{p}\chi_i(g) = 
-\frac{1}{p}$.
On the left hand side the term is either $0$ (when $p\nmid n_i$ then $\chi_i(g)=0$) or an 
algebraic integer (when $p\mid n_i$, $\frac{n_i}{p}$ is an integer). Thus, the overall left side is an 
algebraic integer while the right-hand side is a rational number hence it should be an integer which is 
a contradiction. Hence, there exists a non-trivial irreducible representation $\rho_i$ such that 
$\rho_i(C)$ is contained in the center of $\rho_i(G)$.

Now, let us take the above $\rho\colon G\rightarrow \GL(V)$ and we have $\rho(C)\subset \mathcal 
Z(\rho(G))$ where $C$ is a conjugacy class of non-identity element. We want to show that $G$ can 
not be simple. If $ker(\rho)\neq 1$ it will be a normal subgroup of $G$ and thus $G$ is not simple.
In case $ker(\rho)=1$, $\rho$ is an injective map and $\mathcal Z(\rho(G))\neq 1$. But the centre is 
always a normal subgroup which again implies $G$ is not simple.
\end{proof}

\noindent With these results in hand we are ready to prove our main Theorem. 
{\color{red}
\begin{theorem}[Burnside's Theorem]
Every group of order $p^aq^b$, where $p,q$ are distinct primes, is solvable.
\end{theorem}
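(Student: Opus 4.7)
I will argue by strong induction on $|G|$, using the preceding proposition to produce a proper nontrivial normal subgroup whenever $G$ is nonabelian of order $p^aq^b$. Once such an $N$ is in hand, both $N$ and $G/N$ have order strictly less than $|G|$ and of the same shape $p^{a'}q^{b'}$, so by induction each is solvable; since an extension of a solvable group by a solvable group is solvable, $G$ itself is solvable.

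The base case $|G|=1$ is trivial, and any abelian $G$ is solvable, so I may assume $G$ is nonabelian. If one of $a,b$ is zero then $G$ is a nontrivial nonabelian prime-power group, so its center is a nontrivial proper normal subgroup by the class equation and the induction closes. Hence assume $a,b\geq 1$.

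Fix a Sylow $q$-subgroup $Q$ of $G$. Since $|Q|=q^b>1$, the center $Z(Q)$ is nontrivial; pick $z\in Z(Q)\setminus\{1\}$. Because $Q\subseteq C_G(z)$, the index $[G:C_G(z)]$ divides $p^a$, so the $G$-conjugacy class $C$ of $z$ has cardinality $|C|=p^r$ for some $0\leq r\leq a$. If $r=0$, then $z$ lies in $Z(G)$, which is then a nontrivial proper normal subgroup (proper because $G$ is nonabelian). If $r\geq 1$, the preceding proposition applies directly to $C$ and asserts that $G$ is not simple, again producing the required $N$.

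The main analytic obstacle has already been absorbed into the preceding proposition, whose proof combined the algebraic-integer arithmetic of Proposition~\ref{lambdaij} with Galois-theoretic bounds on norms of character ratios and the column formula for the regular character. What remains for Burnside is the elementary Sylow-plus-class-equation bookkeeping above; the only mild care required is separating the abelian and pure prime-power cases so that the normal subgroup produced is genuinely proper, which is handled by the preliminary reductions.
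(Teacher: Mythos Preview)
Your proof is correct and follows essentially the same route as the paper's: both pick a nonidentity element in the center of a Sylow $q$-subgroup, observe its conjugacy class has $p$-power size, invoke the preceding proposition (or the central case) to produce a proper nontrivial normal subgroup, and finish by induction on $N$ and $G/N$. The only cosmetic differences are that the paper inducts on $a+b$ rather than $|G|$ and does not separate out the abelian case explicitly, whereas you do so to guarantee the normal subgroup is proper.
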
}
\begin{proof} We use induction on $a+b$. 
If $a+b=1$ then $G$ is a $p$-group and hence $G$ is solvable. 
Now assume $a+b\geq 2$, and any group of order $p^rq^s$ with $r+s < a+b$ is solvable. 
Let $Q$ be a Sylow $q$-subgroup of $G$. 
If $Q=\{e\}$ then $b=0$ and $G$ is a $p$-group and hence solvable (see the Exercise~\ref{pgroups}). 
So let us assume $Q$ is non-trivial. 
Since $Q$ is a $q$-group (prime power order) it has a non-trivial centre. 
Let $1\neq t\in \mathcal Z(Q)$. 
Then 
$$t\in Q\subset C_G(t)\subset G$$
and hence $|C_G(t)|=p^lq^b$ for some $0\leq l\leq a$ 
which gives $[G:C_G(t)]=p^{a-l}$.

First, we claim that $G$ is not simple. 
If $G=C_G(t)$ then $t\in\mathcal Z(C_G(t))=\mathcal Z(G)$, i.e., $\mathcal Z(G)$ is a non-trivial 
normal subgroup and $G$ is not simple. 
Hence we may assume $|C_G(t)|=p^lq^b$ with $l < a$.
Then using the formula we get the size of the conjugacy class of $t$: 
$|C(t)|=\frac{|G|}{|C_G(t)|}=p^{a-l}$. Here $C(t)$ denotes the conjugacy class of $t$. 
Now, using the previous proposition we get $G$ is not simple.

Since we know that $G$ is not simple, it has a proper normal subgroup, say $N$. 
The order of $N$ and $G/N$ both satisfy the hypothesis hence they are solvable. Thus $G$ is solvable 
(see Proposition 10, Section 6.1~\cite{df}). 
\end{proof}

The most amazing and celebrated result in this direction is Feit-Thompson Theorem (see~\cite{ft}): 
Every finite group of odd order is solvable. Thus, non-Abelian finite simple groups must be of even 
order. 

\section{Solvable Groups}
Read about this topic.


\chapter*{}
\vskip10cm
\begin{center}
{\bf \Huge PART -- III}
\end{center}
\vskip5cm

One of the important techniques to produce representations is the method of induction.

\chapter{Induced Representation}\label{ind-rep}

Let $G$ be a finite group and $H$ its subgroup. Let $k$ be a field. Note that the various definitions and constructions make sense over any field $k$. However, whenever we talk about characters we consider only $\mathbb C$-representations. Given a representation 
$\rho\colon G \rightarrow \GL(V)$ we can restrict this representation to $H$ and get $\rho|_H 
\colon H \rightarrow \GL(V)$ where $\rho_H(h)=\rho(h)$. This is also written as $Res_H^G(\rho)$ or 
$\rho\!\downarrow$. The idea is to do the other way round, i.e., to begin with, a representation of 
a subgroup $H$ and get a ``method'' which gives a representation of $G$. There are several ways to 
understand this. We will see some of them here.

\section{Induced Representations from Subgroups}
Let $H$ be a subgroup of $G$, and let $V$ be a $G$-representation. We can restrict the actin of $G$ 
to $H$ and consider $V$ as a $H$-representation. Let $W$ be a $H$-invariant subspace of $V$. Note that even 
though $V$ is irreducible as a $G$ representation it need not be irreducible as a $ H$ representation. We consider $gW\subset 
V$ for any $g\in G$. Notice that for any $h\in H$ the subspaces $gW$ and $ghW$ are the same. Hence, if 
at all, there are possible as many distinct subspaces $gW$ as coset representatives $g\in G/H$. If we consider $W'=\displaystyle \sum_{g\in 
G/H} gW$ then $W'\subset V$ is a $G$-subspace. Now, we are ready to define induced representation.
{\color{teal}
\begin{definition}\label{ind-def1}
The $G$-representation $V$ is called {\bf induced} from the $H$-subrepresentation $W$ if 
$V=\displaystyle\bigoplus_{g\in G/H} gW$ where sum varies over representatives of the left cosets 
of $H$. The induced module $V$ is denoted by $Ind_H^GW$ or $Ind(W)$ or $W\!\uparrow$.
\end{definition}}
\noindent Let us try to understand what $V$ looks like if it is an induced module. Let 
$s_1=e,s_2,\ldots, s_r$ be a set of representatives of distinct cosets of $G/H$, i.e, $G=H 
\cup s_2H \cup \ldots \cup s_rH$. Then, 
\begin{proposition}\label{ind-dimension}
Let $G$ be a group with $H$ a subgroup. Let $V$ be a $G$-representation and $W$ be an $H$-subrepresentation of $V$. 
Suppose $V=Ind_H^GW$. Then, 
\begin{enumerate}
\item $s_iW=s_ihW$ for any $h\in H$. 
 \item $s_iW\bigcap s_jW =\{0\}$ when $i\neq j$.
 \item the $G$ action on the set $\{s_iW \mid s_i \in G/H \}$ permutes $s_iW$'s (that is acts transitively with stabiliser $H$).
 \item $\dim V = |G/H| \dim W$.
\end{enumerate}
Here $s_1=e,s_2,\ldots, s_r$ is a set of representatives of distinct cosets of $G/H$.
\end{proposition}
\begin{proof}
Since $W$ is a $H$-representation, it is clear that $s_iW=s_ihW$ for any $h\in H$. 

Since $V=Ind_H^GW = \displaystyle\bigoplus_{g\in G/H} gW$ it follows that $\{x\in G \mid xW=W\} = 
H$. We can also define $G$ action on $\{s_iW \mid 1\leq i \leq r\}$ by $g.s_iW = (gs_i)W$. This is 
a transitive action and the stabiliser of $W$ is $H$. 

Fix a basis of $W$, say, $w_1, \ldots w_l$. Then, a basis of $V$ would be 
$$w_1, \ldots, w_l, s_2w_1, \ldots, s_2w_l, \ldots, s_rw_1, \ldots s_rw_l$$
where $s_iw_1, \ldots, s_iw_l$ forms a basis of $s_iW$. Thus, $\dim V = |G/H| \dim W$.
\end{proof}
\begin{exercise}
With notation as in the proof above, show that 
$$\{w_1, \ldots, w_l, s_2w_1, \ldots, s_2w_l, \ldots, s_rw_1, \ldots s_rw_l \}$$
is a basis of $V$. In fact, $s_iw_1, \ldots, s_iw_l$ forms a basis of $s_iW$. 
\end{exercise}

Let us try to understand how $G$-representation action would be on $V=Ind(W)$ in terms of the given $H$-representation $W$. What really happens is that the induced module action is fully determined by two things: 
\begin{itemize}
\item $G$ action on $G/H$, and 
\item $H$ action on $W$. 
\end{itemize}
Since $G \times G/H \rightarrow G/H$ given by $g.xH = 
(gx)H$ is a permutation action, let us write this as follows: $g.s_iH = s_{\sigma(i)}H$ where $\sigma\in 
S_r$ depends on $g$. That is, $gs_i = s_{\sigma(i)} h_i$ for some $h_i\in H$. Now we can 
write the $G$-module action on the basis vectors as follows: 
$$g.(s_iw_j) = s_{\sigma(i)}(h_i w_j)$$
where $s_iw_j$ are basis vectors of $V$ (as in the proof of the Proposition above). Let us work out some examples.
\begin{example}
Consider the regular representation of $G$ on $\mathbb C[G]=\{\sum \alpha_ge_g\mid \alpha_g\in \mathbb C\}$ given by $x.e_g = e_{xg}$. Now consider the trivial subgroup $H=\{1\}$ and $W=\{\alpha e_1 \mid \alpha\in \mathbb C\}$. We claim that $\mathbb C[G]=Ind_H^GW$. This can be seen following the explanation above. The coset representatives of $G/H$ would be the elements of $G$ with group multiplication as an action. So, the basis of $Ind(W)$ would be $\{se_1=e_s \mid s\in G\}$ and the representation would be $g.se_1 = e_{gs}$ which is the regular representation. Thus, regular representation is an induced representation, induced from a $1$-dimensional representation of the trivial subgroup. 
\end{example}

\begin{example}\label{indz4}
Let $G=\mathbb Z/4\mathbb Z $ and $V=\mathbb Ce_1 + \mathbb Ce_2$ be the representation $\mathbb 
Z/4\mathbb Z \rightarrow \GL_2(\mathbb C)$ given by $\bar 1\mapsto \begin{pmatrix} i& \\ & -i 
\end{pmatrix}$. Consider the subgroup $H=\langle \bar 2\rangle \cong \mathbb Z/2\mathbb Z$ and 
$W=<e_1>$. Since $W$ is invariant under whole $G$, we see that $\sum_{g\in G/H} gW = W$. Thus, $V$ 
is not induced by $W$ in this case. 
\end{example}

\begin{example}
Once again let us consider the regular representation of $G$ on $\mathbb C[G]=\{\sum \alpha_ge_g\mid \alpha_g\in \mathbb C\}$ given by $x.e_g = e_{xg}$. Now consider any subgroup $H$ and $W=\{\sum_{h\in H}\alpha_{h} e_h \mid \alpha_h \in \mathbb C\}$. Clearly, $W$ is a $H$ subrepresentation. We claim that $\mathbb C[G]=Ind_H^GW$. This can be seen as follows. The coset representatives of $G/H$, say are $s_1, \ldots, s_r$ and the basis vectors of $W$ are $\{e_h \mid h\in H\}$. Thus, basis vectors of $Ind_H^G W$ would be $\{s_ie_h \mid h\in H, i=1, \ldots, r\} =\{e_g \mid g\in G\}$. Further, the representation would be $g.s_ie_h = ge_{s_ih} =e_{gs_ih}$ which is the regular representation. Thus, regular representation is induced representation, induced from $H$-representation $W=\{\sum_{h\in H}\alpha_{h} e_h \mid \alpha_h \in \mathbb C\}$.
\end{example}

\begin{example}
Let us consider the representation of the dihedral group $G=D_m=\langle a,b \mid a^m=1=b^2, ab=ba^{m-1} \rangle$ on $V=\mathbb Ce_1\oplus \mathbb Ce_2$ defined by:
$$a \mapsto \left[\begin{matrix} e^{\frac{2\pi i }{m}} &  \\ & e^{-\frac{2\pi i}{m}}\end{matrix}\right], 
b \mapsto \left[\begin{matrix} 0 & 1 \\ 
1 & 0\end{matrix}\right].$$
Consider the representation of $H= \langle a\rangle \cong \mathbb Z/m\mathbb Z$ on the subspace $W=<e_1>$. We claim that $Ind_H^GW = V$. Let us verify this. We can take coset representatives of $G/H$ to be $\{1, b\}$. Then, the basis vectors for $Ind_H^GW$ would be $\{e_1, be_1=e_2\}$.
Now let us determine the action of $G$. Check that $a.e_1= e^{\frac{2\pi i }{m}} e_1$ (this is given) and  $a.be_1= ba^{-1} e_1 = e^{-\frac{2\pi i }{m}}b  e_1 = e^{-\frac{2\pi i }{m}} e_2$. Similarly, $be_1=e_2$ and $b.be_1 = 1.e_1=e_1$. Note that in this case an irreducible representation, after induction, gives an irreducible representation.
\end{example}

\begin{exercise}
Given $H$ and $W$ there is a unique $G$-representation $Ind_H^GW$.
\end{exercise}

\section{Construction of Induced Representation}
This formula suggests that we can always construct an induced $G$-module from a given $ H$ module 
$W$. We don't need to have a $G$-representation $V$ a priori. Let us do this. Let $s_1, \ldots, s_r$ 
be representatives of $G/H$. Consider, $V=W\oplus W\oplus \cdots \oplus W$, $r$-copies. We have 
$G\times G/H \rightarrow G/H$ given by $g.xH=(gx)H$. Now, define the $G$-representation as follows:
$$g.we_i = h_i(w)e_j$$
where $gs_iH = s_jH$ gives $gs_i = s_jh_i$ for some $h_i\in H$. That is, $(0, \ldots, 0, w, 0, 
\ldots,0)$ with $w$ at $i$-th place gets mapped to  $(0, \ldots, 0, h_i(w), 0, \ldots,0)$ with 
$h_i(w)$ at $j$-th place.
\begin{exercise}
Check that we have a $G$-representation with its character given by the formula of induced 
character.
\end{exercise}

\begin{example}
Consider the group $Q_8$ and $H=\langle i \rangle \cong \mathbb Z/4\mathbb Z$. Take the 
$1$-dimensional representation $\theta\colon H \rightarrow \mathbb C^*$ given by $ i \mapsto \iota$ (here the first $i$ is an element of $Q_8$, and the complex one is denoted by $\iota$ to avoid confusion). What's 
the induced representation $Ind_H^{Q_8}\theta$? Let the representation of $H$ be $W=<w>$, i.e, $i.w=\iota w$. Let us fix a representatives 
of $Q_8/H$ to be $\{1,j\}$ and a basis of $Ind(W)$ to be $\{1w, jw\}$. Now, for $i\in Q_8$, $i.1w = 
i.w = \iota w$ and $i.jw = -jiw = -j \iota w=-\iota jw$, hence $i\mapsto \begin{pmatrix} \iota & \\ 
& -\iota\end{pmatrix}$. Similarly, for $j \in Q_8$, $j.1w = jw$ and $j.jw = -w$, hence $j \mapsto 
\begin{pmatrix}  & -1 \\ 1& \end{pmatrix}$. Note that here we get an irreducible representation 
after induction.
\end{example}

\begin{example}
Let us look at Example~\ref{indz4} again. Fix the coset representatives of $\mathbb Z/4 \mathbb 
Z / \langle 2\rangle$ to be $\{\bar 0,\bar 1\}$. Take the representation of $\langle \bar 2\rangle 
\cong \mathbb Z/2 \mathbb Z /$ to be $\langle \bar 2\rangle \rightarrow \mathbb C^*$ given by $\bar 
2 \mapsto -1$. Suppose the representation space is $W=<w>$. Take a basis of $Ind(W)$ to be $\{w, 
\bar1 w\}$ which would be a $2$-dimensional representation given by $\bar 1. w = \bar1w$ and $\bar 
1. \bar 1w = \bar 2 w = -w$. Thus, the induced representation is given by $\bar 1 \mapsto 
\begin{pmatrix}  & -1 \\ 1& \end{pmatrix}$. 
\end{example}

\chapter{Character of the Induced Representation}
As we have seen earlier that the induced character is determined by the $G$ action on $G/H$ together with the $H$ representation $W$. So, if we know the character of $H$-representation $W$, say $\theta$, we should be able to determine the character of the 
induced representation $V=Ind(W)$, namely $Ind(\theta)$ (or $Ind_H^G\theta$).
{\color{red}
\begin{theorem}\label{indchar}
 Let $\theta$ be the character of the $H$-representation $W$. Then, the character 
$\chi=Ind_H^G\theta$ is given as follows:
$$Ind_H^G\theta(g) = \chi(g) = \sum_{s_i \atop s_i^{-1}gs_i \in H} \theta(s_i^{-1}gs_i) = 
\frac{1}{|H|}\sum_{x\in G \atop xgx^{-1}\in H}\theta(xgx^{-1})$$
where $s_1=e,s_2,\ldots, s_r$ is a set of representatives of distinct cosets of $G/H$.
\end{theorem}}
\begin{proof}
For $g\in G$, we need to compute $\chi(g)=tr(g)$ on $V$. For this, we make use of $V=s_1W\oplus 
s_2W\oplus\cdots \oplus s_rW$ on which $G$ acts by permutation. Let us consider the basis $\{w_1, \ldots, w_l, s_2w_1, \ldots, s_2w_l, \ldots, 
s_rw_1, \ldots s_rw_l\}$. More precisely, the action is given by $g.(s_iw_j) = s_{\sigma(i)}(h_i w_j)$ where $gs_i = 
s_{\sigma(i)} h_i$ for some $h_i\in H$. Imagine that we have to write the matrix of $g$ with respect to this basis. When $g.s_iW\neq s_iW$ it contributes $0$ to the trace. So, the only contribution for the trace of $g$ will come when  $g.s_iW=s_iW$. We do this more precisely below.

When $g.s_i W = s_jW$ with $i\neq j$ it won't contribute to the trace of $g$. Thus, we need to look 
at the components where $g.s_i W = s_iW$. In this case, $gs_i = s_i h_i$ for some $h_i\in H$, that 
is, $s_i^{-1}gs_i \in H$. Thus,
\begin{eqnarray*}
 \chi(g) &=& tr(g) = \sum_{s_i \atop g.s_iW=s_iW} tr(g|_{s_iW})\\
 &=& \sum_{s_i\atop s_i^{-1}gs_i\in H} tr(s_i^{-1}gs_i) = \sum_{s_i\atop s_i^{-1}gs_i\in H} 
\theta(s_i^{-1}gs_i).
\end{eqnarray*}
The second equality follows as the size of each coset is $|H|$.
\end{proof}

\begin{example}
Consider $H=\{1\}$ the trivial subgroup of $G$. The, character of the trivial representation on $H$ is $\theta\colon H \rightarrow \mathbb C$ given by $\theta(1)=1$. We can use the above formula to compute the character $Ind_H^G\theta$ as follows. First of all, $Ind_H^G\theta(1) = \displaystyle \frac{1}{|H|}\sum_{x\in G \atop x1x^{-1}\in H}\theta(x1x^{-1}) = \sum_{x\in G }\theta(1) =|G|$, and for $g\neq 1$, $Ind_H^G\theta(g) = \displaystyle \frac{1}{|H|}\sum_{x\in G \atop xgx^{-1}\in H}\theta(xgx^{-1}) = 0$ as $xgx^{-1}\in H$ if and only if $g=1$. Thus, $Ind_H^G\theta$ is the regular character on $G$. 
\end{example}

\begin{example}
Let us consider the subgroup $H= \langle a\rangle \cong \mathbb Z/m\mathbb Z$ of the dihedral group $G=D_m=\langle a,b \mid a^m=1=b^2, ab=ba^{m-1} \rangle$. Take the character $\theta\colon H \rightarrow \mathbb C$ given by $\theta(a)= e^{\frac{2\pi i }{m}}$. We can take coset representatives of $G/H$ to be $\{1, b\}$.  Then, $Ind_H^G\theta(1) = \displaystyle \frac{1}{|H|}\sum_{x\in G \atop x1x^{-1}\in H}\theta(x1x^{-1}) = \frac{1}{m}\sum_{x\in G }\theta(1) =\frac{2m}{m}=2$. Similarly, $Ind_H^G\theta(a) = \displaystyle \sum_{s_i \atop s_i^{-1}as_i \in H} \theta(s_i^{-1}as_i) =   \theta(a) +\theta (b^{-1}ab) =     \theta(a) + \theta(a^{-1}) = e^{\frac{2\pi i }{m}}+    e^{-\frac{2\pi i}{m}}$, and $Ind_H^G\theta(b) = \displaystyle \sum_{s_i \atop s_i^{-1}bs_i \in H} \theta(s_i^{-1}bs_i) =  0$ as $1b1\notin H$ as well as $b^{-1}bb\notin H$. Note that in this case an irreducible character, after induction, gives an irreducible character.
\end{example}

Given $f\in \mathbb C[H]$ we can extend $f$ to $\dot f\in \mathbb C[G]$ as follows: $\dot f(g)=f(g)$ if $g\in H$ and $\dot f(g)=0$ otherwise. With this notation, we can introduce the following map 
\begin{eqnarray*}
Ind_H^G &\colon& \mathbb C[H] \rightarrow \mathbb C[G] \\ 
Ind_H^G f(g) &=& \sum_{s_i\in G/H} \dot f(s_i^{-1}gs_i) = 
\frac{1}{|H|}\sum_{x\in G}\dot f(xgx^{-1}).
\end{eqnarray*}
Thus, in the view of the above theorem, a character gets mapped to a character.

\section{Induced Representation via Tensor Product}

Let $G$ be a group and $H$ be a subgroup. Let $k$ be a field. Suppose we are given $W$ a $k[H]$-module which amounts to having a representation of $H$. We would like to construct a $k[G]$-module. Since $H$ is a subgroup of $G$, we already have $k[G]$ a $k[H]$-module (think of ring/algebra-extension).
{\color{teal}
\begin{definition} 
Now we define induced representation using the tensor product as follows:
$$Ind_H^G W := k[G]\otimes_{k[H]} W$$
which is naturally a $k[H]$-module. We make it a $k[G]$-module as follows: for $\phi \in k[G]$ define $\phi. (\sum_i f_i \otimes w_i) := (\sum_i \phi f_i)\otimes w_i$. 
\end{definition}}

\begin{exercise}
Show that with the above definition $Ind_H^G W$ is a $k[G]$-module.
\end{exercise}
\begin{exercise}
Show that $k[G]$ is a free $k[H]$-module of rank $|G/H|$, and use this to show that the two definitions of induced modules are the same. We can do this by choosing coset representatives $\{s_1, \ldots, s_r\}$ of $G/H$ which ensures that every element of $k[G]$ is of the form $\beta_1s_1+\cdots \beta_r s_r$ where $\beta_j\in k[H]$. Then, 
$$\sum (\beta_1s_1+\cdots +\beta_r s_r)\otimes w =  \sum \beta_1s_1 \otimes w +\cdots +\beta_r s_r\otimes w= \sum s_1\otimes\beta_1 w + \cdots +s_r \otimes \beta_r w.$$
Again, by fixing a basis of $W$, say, $w_1, \ldots, w_l$, we get a basis 
$$\{s_1\otimes w_1, \ldots, s_r\otimes w_1, \ldots, s_1\otimes w_l, \ldots, s_r\otimes w_l\}$$
of $Ind_H^GW$.
\end{exercise}
Thus, we see that this definition is the same as the Definition~\ref{ind-def1}. This definition is quite conceptual and makes it clear that the construction of an induced module can be naturally done. Thus,
$Ind_H^G$ is a functor which sends any $k[H]$-module $W$ to the $k[G]$-module $k[G]\otimes_{k[H]}W$. This is quite a in sync with the $Ind_H^G$ sending any class function on $H$ to a class function on $G$.

\section{Induced Representations via Invariant-functions}

There is yet another way to define induced representations via invariant maps. Each of these definitions has the advantage of being generalised to different setups. The following definitions are more often used when we are dealing with infinite groups, such as compact groups, Lie groups etc. Once again we begin with a subgroup $H$ of $G$ and a representation $W$ of $H$.  We take 
$$V=\{f\colon G\rightarrow W \mid f(xh^{-1})=hf(x)\}$$ 
the set of $H$ invariant maps from $G$ to $W$. We define $G$ action on $V$
as follows: $(gf)(x)=f(g^{-1}x)$. We first check that $gf\in V$. Notice that if we fix $\{s_1,\ldots s_r\}$ as left coset representatives for $G/H$ then the maps in $V$ are defined once they are specified on $s_i$'s.
\begin{exercise}
 \begin{enumerate}
  \item Prove that $gf\in V$. Note that $(gf)(xh^{-1}) = f(g^{-1}xh^{-1})=hf(g^{-1}x)=h(gf)(x)$.
  \item $f\in V$ is defined by its values on $\{s_1, \ldots, s_r\}$. Because, $f(g)=f(s_ih)=h^{-1}f(s_i)$ where $g\in s_iH$.
  \item Show that $V$ is a vector space of dimension $r.\dim(W)$.
\item Prove that $V\cong Ind_H^GW$. For this, consider the functions $f_i\in V$ which map $s_iH$ to $W$ and other elements of $G$ to $0$.
 \end{enumerate}
\end{exercise}
{\bf Warning:} If we take $V$ as left-$H$-invariant functions then we need to make $G$ act on the right. Be aware of which definition you are using. 

Now we are going to see that the induced module satisfies a certain kind of universal property:
\begin{proposition}\label{ind-universal}
Let $H$ be a subgroup of $G$ and $W$ be a $H$-module. Then, for any $G$ module $M$ and any $H$-homomorphism $\phi \colon W\rightarrow M$ there exists a unique 
$G$-homomorphism $\tilde\phi \colon Ind_H^G W\rightarrow M$ such that the following diagram commutes:
$$
\xymatrix{
W\ar[rd]_{\phi} \ar[rr]& & Ind_H^G W \ar@{-->}[ld]^{\tilde\phi} \\
&M&
}
$$
where $W$ embeds in $Ind_H^G W$ as $1.W$ component. 
Thus, $Hom_H(W,Res_H^G M) \cong Hom_G(Ind_H^G, M)$ as a $k$-vector space.
\end{proposition}
\begin{proof}
Given $\phi$ we define $\tilde\phi\colon k[G]\otimes W \rightarrow M$ as follows: $\tilde\phi(\sum \alpha\otimes w) = \sum \alpha\phi(w)$. Check that $\tilde\phi$ is a $k[G]$-module homomorphism. Now we know that any map on the induced module can be determined by its value on $W$.
\end{proof}
\begin{exercise}
 Prove that $Ind_H^GW$ is a relatively free $kH$-module, i.e., prove the above proposition.
\end{exercise}
\begin{exercise}
Show that induction is transitive. That is, if $H$ is a subgroup of $G$ and $K$ is a subgroup of 
$H$, i.e, $K\subset H \subset G$, then for any $K$-module $W$ we have
$$Ind_H^G Ind_K^H W = Ind_K^G W.$$
\end{exercise}

\begin{exercise}
We have the permutation representation $\rho$ of $S_n$ on the set $X=\{1, 2, \ldots, n\}$. Consider $S_n$ as a subgroup of $S_{n+1}$ by fixing the last symbol $n+1$. Determine the representation and character of $Ind_{S_n}^{S_{n+1}}\rho$. 
\end{exercise}

\chapter{Fr\"{o}benius Reciprocity}

Let $G$ be a group and $H$ be a subgroup. We can define the restriction map $Res_H^G \colon \mathbb C[G] \rightarrow \mathbb C[H]$ by simply restricting the function $f\colon G \rightarrow \mathbb C$ to $f|_H=: Res_H^G(f)$.  

We can also define the induction map $Ind_H^G \colon \mathbb C[H] \rightarrow \mathbb C[G]$ as follows (see Theorem~\ref{indchar}). For any function $f$ on $H$ we define induced function $Ind_H^Gf \colon G\rightarrow \mathbb C$ by
$$Ind_H^Gf(g) = \sum_{i=1}^r \dot f(s_igs_i^{-1}) = \frac{1}{|H|}\sum_{x\in G} \dot f(xgx^{-1})$$
where $\dot f$ is an extension of $f$ on $G$ defined to be $0$ outside $H$.
\begin{exercise}
\begin{enumerate}
\item Prove that if $f$ is a class function on $H$ then $Ind_H^Gf$ is a class function on $G$. This is clear if we look at the second formula.
\item Prove that if $f$ is a character on $H$ then $Ind_H^Gf$ is so. We have proved this in the 
last chapter that $Ind_H^Gf$ is a character of the induced representation.
\end{enumerate}
\end{exercise}

Recall that for a group $\mathcal G$ we define the following inner product on the class functions:
$$\langle \phi_1,\phi_2 \rangle_{\mathcal G} = \frac{1}{|\mathcal G|}\sum_{t\in \mathcal G}\phi_1(t)\overline{ \phi_2(t)}.$$
The Fr\"{o}benius Reciprocity theorem tells us that $Res_H^G$ and $Ind_H^G$ are adjoints of each 
other. More precisely we have the following,
{\color{red}
\begin{theorem}[Fr\"{o}benius Reciprocity]
Let $G$ be a group and $H$ be a subgroup. Let $f$ be a class function on $H$ and $\psi$ be a class function on $G$. Then, 
$$\langle Ind_H^G f,\psi\rangle_G = \langle f, Res_H^G \psi \rangle_H.$$
\end{theorem}}
\begin{proof}
The proof is computational using the formula.
\begin{eqnarray*}
\langle Ind_H^G f,\psi\rangle_G &=&  \frac{1}{|G|}\sum_{t\in G} Ind_H^Gf(t)\overline{ \psi(t)} = \frac{1}{|G||H|}\sum_{t\in G}  \sum_{x\in G} \dot f(xtx^{-1}) \overline{ \psi(t)}\\
&=& \frac{1}{|G||H|}\sum_{x\in G}  \sum_{t\in G} \dot f(xtx^{-1}) \overline{ \psi(t)}.
\end{eqnarray*}
Now, note that $\dot f$ is zero outside $H$, so, the non-zero terms in the inside sum would appear only when $xtx^{-1}\in H$. That is, when $xtx^{-1}=h$ or $t=x^{-1}hx$ for some $h\in H$. Thus, 
\begin{eqnarray*}
\langle Ind_H^G f,\psi\rangle_G &=& \frac{1}{|G||H|}\sum_{x\in G}  \sum_{t\in G} \dot f(xtx^{-1}) \overline{ \psi(t)} = \frac{1}{|G||H|}\sum_{x\in G}  \sum_{h \in H} \dot f(h) \overline{ \psi(x^{-1}hx)}\\
&=& \frac{1}{|G||H|}\sum_{x\in G}  \sum_{h\in H} f(h) \overline{ \psi(h)} = \frac{1}{|G|}\sum_{x\in G}  \langle f, Res_H^G \psi \rangle_H = \langle f, Res_H^G \psi \rangle_H.
\end{eqnarray*}
This completes the proof.
\end{proof}

Let us also define $\langle V_1,V_2\rangle_{\mathcal G} := \dim_{\mathbb C}\Hom_{\mathcal G}(V_1,V_2)$.

\begin{exercise}
 \begin{enumerate}
  \item Let $\chi_1$ and $\chi_2$ be characters of $V_1$ and $V_2$ respectively. 
Then $\langle \chi_1,\chi_2\rangle=\langle V_1,V_2\rangle.$
\item Prove that $\langle W, Res_H^G V\rangle_H = \langle Ind_H^G W,V\rangle_G$ where $W$ is a $H$-representation and $V$ is a $G$-representation.
 \end{enumerate}
\end{exercise}

An interesting question to study is if we can get all possible characters of a given group $G$ by induction of representations of proper subgroups. Let us define $R(G)$ to be the free Abelian subgroup generated by all irreducible characters of $G$, i.e., $R(G)=\mathbb Z\chi_1 + \cdots + \mathbb Z \chi_h$. Notice that $R(G)\otimes \mathbb C = \mathcal H$, the space of class functions. Thus, $Res_H^G \colon R(G) \rightarrow R(H)$ and $Ind_H^G \colon R(H) \rightarrow R(G)$.
\[\xymatrix{ R(H) \ar@/^2pc/[rr]|{Ind_H^G}
&& R(G) \ar@/^2pc/[ll]|{Res_H^G} }\]
\vskip1mm 
Now, we can take $\displaystyle\prod_{H< G} R(H) \rightarrow R(G)$ and ask the question if it is a surjective map. There are a couple of theorems due to Brauer and Artin (see~\cite{se} Part 2) which answer these questions. We illustrate this with an example.

\begin{example}
 Let us consider the Kleins four group $G=V_4=\{e,a,b,c\}$. Consider the subgroups $H_1=<a>, 
H_2=<b>, H_3=<c>$. Let $\chi_1, \chi_2, \chi_3$ be the non-trivial characters of $H_1, H_2$ and $H_3$ 
respectively. Let ${\bf 1} =\phi_1, \phi_2, \phi_3$ and $\phi_4$
are the characters of $G$. 
\vskip1mm
\[\begin{array}{c|c|c|c|c|}
        & r_1=1 & r_2=1  & r_3=1   & r_4=1  \\
        & e & a & b &   c    \\ \hline
{\bf 1} = \phi_1  & n_1=1 & 1      & 1     &  1       \\ \hline
\phi_2  & n_2=1 & 1     & -1    &   -1       \\ \hline
\phi_3  & n_3=1 & -1     &  1   &  -1     \\ \hline
\phi_4 & n_4 = 1& -1     & -1    &  1    \\ \hline
\end{array}\]
\vskip1mm
Then $Ind_{H_1}^G {\bf 1} = {\bf 1} +\phi_2$ and $Ind_{H_1}^G \chi_1=\phi_3+\phi_4$ and similarly 
others. Let us check this: 
$Ind_{H_1}^G {\bf 1}(g) =  \dot {\bf 1} (g) + \dot {\bf 1}(b^{-1}gb) = 2\dot {\bf 1}(g) = 
({\bf 1} +\phi_2)(g)$ and $Ind_{H_1}^G \chi_1(g) =  \dot \chi_1(g) + \dot \chi_1(b^{-1}gb) = 2\dot 
\chi_1(g)  = (\phi_3+\phi_4)(g)$. 
 This defines a maps
$$\prod_{i=1}^3 R(H_i)\rightarrow R(G)$$
of which image contains ${\bf 1}+\phi_2, {\bf 1} +\phi_3, {\bf 1} +\phi_4, \phi_3+\phi_4, 
\phi_2+\phi_4, \phi_2+\phi_3$. We claim that the image is of index
$2$. We check that $2\phi_j$ belongs to the image and $1$ doesn't. However, the image is the whole of it if we work over $\mathbb Q$.
\end{example}

\begin{example}
 In the case of the dihedral group $D_m$ consider $H=\langle a \rangle$. Let $\theta_r$, for $0\leq r 
\leq m-1$, be the character of $H$ given by $a\mapsto e^{-\frac{2\pi ir}{m}}$. Then, for $r\neq 0$, 
$Ind_H^G\theta_r $ is an irreducible character of dimension $2$ when $\theta_r(a)\neq \pm 1$. Note 
that $r$ and $m-r$ give the same character.  Determine the image $R(H) \rightarrow R(G)$.
\end{example}

\begin{exercise}
Determine the induced representations from the Abelian subgroups for $G=S_3,S_4, Q_8$ and 
determine the subgroup in $R(G)$. 
\end{exercise}

\begin{exercise}
Consider the following character $\phi$ of $S_3$. 
\[\begin{array}{c|c|c|c|}
        & r_1=1 & r_2=3    & r_3=2 \\
        & g_1=1 & g_2=(12) & g_3=(123)   \\ \hline
\phi  & 2 & 0        & -1   \\ \hline
\end{array}\]
Consider $S_3$ as a subgroup of $S_4$ by fixing the last symbol $4$. Recall the character table of 
$S_4$ as follows: 
\[\begin{array}{c|c|c|c|c|c|}
          & r_1=1 & r_2=6  &r_3=3   & r_4=8 &r_5=6 \\
          & g_1=1 & g_2=(12) & g_3=(12)(34) &   g_4=(123) &g_5=(1234)   \\ \hline
\chi_1  & 1&1 &1 &1 &1   \\ \hline
\chi_2  & 1&-1 &1 &1 &-1   \\ \hline
\chi_3 & 3& 1 &-1 &0&-1   \\ \hline
\chi_4 & 3& -1 &-1 &0 &1   \\ \hline
\chi_5 & 2 &0&2&-1&0\\\hline
\end{array}\]\vskip2mm  
\begin{enumerate}
\item Use the Fr\"obenius reciprocity to show that $\langle Ind_{S_3}^{S_4}\phi, \chi_i \rangle= 
\langle\phi, Res_{S_3}^{S_4} \chi_i \rangle$ is $0$ for $i=1,2$ and is $1$ for $i=3,4,5$. Thus, 
conclude that $Ind_{S_3}^{S_4}\phi =\chi_3+\chi_4+\chi_5$. 
 \item Compute the character $Ind_{S_3}^{S_4}\phi$ using the formula and show that 
\[\begin{array}{c|c|c|c|c|c|}
          & r_1=1 & r_2=6  &r_3=3   & r_4=8 &r_5=6 \\
          & g_1=1 & g_2=(12) & g_3=(12)(34) &   g_4=(123) &g_5=(1234)   \\ \hline
Ind_{S_3}^{S_4}\phi  & 8&0 &0 &-1 &0   \\ \hline
\end{array}\]
Now, we can also directly write $Ind_{S_3}^{S_4}\phi =\chi_3+\chi_4+\chi_5$ decomposed as a sum of 
irreducible characters of $S_4$. 
\end{enumerate}
\end{exercise}

\chapter{Mackey's Irreducibility Criteria}

One of the purposes of defining induced representation is to be able to construct irreducible 
representations of the given group $G$. One wonders if knowing irreducible representations of 
various proper subgroups is of some help. Let $H$ be a subgroup of $G$ with a representation 
$(\phi, W)$ and character $\theta$. Let $Ind_H^GW$ be the induced representation of $G$ with the 
corresponding character $Ind_H^G\theta$. To check if $Ind_H^G\theta$ is irreducible we need to 
compute 
$$\langle Ind_H^G\theta, Ind_H^G\theta \rangle_G = \langle \theta, Res_H^G Ind_H^G\theta \rangle_H$$
and see if this is $1$. Thus, in this process we need to understand $Res_H^G Ind_H^G\theta$, more 
generally, the decomposition of $Res_H^G Ind_H^G W$. For this, we take up a slightly more general 
situation as follows.

\section{Restriction of an Induced Representation}
Let $H$ and $K$ be two subgroups of $G$. We are given a representation $(\phi, W)$ of $H$. We would 
like to understand the restriction of the $G$-representation $Ind_H^G W$ to the subgroup $K$, that 
is, $Res_K^G Ind_H^G W$. To understand this we would require the concept of {\bf double coset 
decomposition} of $G$ with respect to $(K,H)$. That is, we define a relation $\sim$ on $G$ as 
follows: for $g,g'\in G$ we say $g\sim g'$ if there exists $k\in K$ and $h\in H$ such that $g'=kgh$. 
The equivalence classes are called $(K,H)$-double cosets of $G$. Let $S=\{s_1, \ldots, s_l\}$ be a 
set of representatives of $(K,H)$-double cosets of $G$. 
\begin{exercise}
Show that $\sim$ is an equivalence relation on $G$. Further, $G$ is a disjoint union of $KsH$ 
where $s\in S$. 
\end{exercise}
\noindent Equivalently, we can write $s\in K\backslash G/H$. Now, for any $s\in S$ we denote the 
subgroups $H_s:= sHs^{-1}\cap K \leq K$.  

\begin{exercise}
Consider the group $\GL_2(q)$ and its Borel subgroup $\mathfrak B$ consisting of all upper 
triangular matrices, i.e., $\mathfrak B = \left\{ \begin{pmatrix} a&b\\ &c 
\end{pmatrix} \mid a, c\in \mathbb F_q^*, b\in \mathbb F_q \right\}$ . Compute $\B\backslash
\GL_2(q)/\B$ by showing that a set of representatives can be chosen to be $\left\{I, 
\begin{pmatrix} 
&1\\ 1& \end{pmatrix}\right\}$.
\end{exercise}
\begin{exercise}
Consider the group $\SL_2(q)$ and its Borel subgroup $\B$ consisting of all upper 
triangular matrices, i.e., $\B = \left\{ \begin{pmatrix} a&b\\ &a^{-1} 
\end{pmatrix} \mid a\in \mathbb F_q^*, b\in \mathbb F_q \right\}$ . Compute $\B\backslash
\SL_2(q)/\B$ by showing that a set of representatives can be chosen to be $\left\{I, 
\begin{pmatrix} 
&-1\\ 1& \end{pmatrix}\right\}$.
\end{exercise}

\begin{exercise}
Consider the Dihedral group $D_n=\langle a, b \mid a^n=1=b^2, ab=ba^{-1}\rangle$ and the subgroup 
$N=\langle a \rangle$. Compute $N\backslash D_n/N$.
\end{exercise}
\begin{exercise}
Consider the group $\GL_n(q)$ and its Borel subgroup $B_n$ consisting of all upper triangular 
matrices. Compute $B_n\backslash \GL_n(q)/B_n\cong S_n$ by showing that the representatives can be 
chosen to be the permutation matrices.
\end{exercise}

Now, since we are given a representation $(\phi, W)$ of $H$ we can consider representations $s\phi 
s^{-1}$ on $sHs^{-1}$. Let us set the following notation for this: $(\phi_s,W_s)$ a representation 
of $H_s:= sHs^{-1}\cap K$. Here we have $W_s=W$ and $\phi_s(x)=\phi(s^{-1}xs)$ for all 
$x\in H_s$. Just to distinguish the action of $sHs^{-1}$ from that of $H$ we denote the 
representation space by $W_s$ here (though it's the same space $W$). 
{\color{red}
\begin{theorem}\label{ind-res}
The representation $Res_K^G Ind_H^G W$ is decomposed as follows:
$$Res_K^G Ind_H^G W \cong \bigoplus_{s\in K\backslash G/H} Ind_{H_s}^K W_s$$
where $s$ varies over a set of representatives of the double cosets. 
\end{theorem}}
\begin{proof}
Denote $V= Ind_H^GW =\displaystyle\bigoplus_{t \in G/H} tW$ and recall $G$ acts on $\{tW \mid 
t\in G/H\}$ with stabiliser $H$. 

{\bf Step 1:} For $s\in S$, let us denote by $V(s)=\displaystyle \sum_{x\in KsH} xW = \sum_{k\in K} 
ksW $. We claim that 
\begin{enumerate}
\item each $V(s)$ is a $K$-representation (this is clear by second formula) and, 
\item $V=\displaystyle\bigoplus_{s\in S} V(s)$ (because $G$ is a disjoint union of the $(K,H)$ 
double cosets). 
\end{enumerate}

{\bf Step 2:} We claim that $V(s) \cong Ind_{H_s}^K W_s$. First, we note that 
$V(s)=\displaystyle \bigoplus_{k\in K/H_s} k(sW)$. The sum part follows from the definition and for 
the direct sum, we note that $\{x\in K \mid x(sW)=sW\} = H_s$. More precisely, if $k_1sW=k_2sW$ then 
$s^{-1}k_2^{-1}k_1s W=W$, that is, $s^{-1}k_2^{-1}k_1s \in H$, thus, $k_1\equiv k_2 \imod {H_s}$. 
This proves that $V(s)=\displaystyle \bigoplus_{k\in K/H_s} k(sW) = Ind_{H_s}^K(sW)$. 

{\bf Step 3:} Now for the final step we show that $W_s\cong sW$ as $H_s$-modules. This follows from 
writing out the definition in both cases explicitly. For $g\in H_s$ write $g=shs^{-1}$ for some 
$h\in H$, then $g.W_s = s^{-1}gs(W) = h(W)$ and $g.sW = shs^{-1}sW=sh(W)$. 

This completes the proof.
\end{proof}

{\bf Summary (steps to write the decomposition):}

{\bf Given: } 
\begin{itemize}
          \item $H$ and $K$ subgroups of $G$, and 
          \item $(\phi, W)$ a representation of $H$.
\end{itemize}

{\bf Want: } to understand $Res_K^G(Ind_H^G W)$ in terms of given data?

{\bf Steps: } 
\begin{itemize}
\item Choose $S$ a set of representatives of the double cosets $K\backslash G/H$.
\item Construct the subgroups $H_s$ of $K$ and its representation $\phi_s$ on $W$ (denoted as 
$W_s$ to indicate a different action).
\item Now consider the induced representations $Ind_{H_s}^K W_s$ which gives the final answer.
\end{itemize}

\section{Mackey-Wigner Irreducibility Criteria}

Now we can get back to our main problem where we can apply the Theorem~\ref{ind-res} by taking $K=H$. Let 
us recall the main problem. Let $G$ be a group and $H$ be a subgroup with a representation 
$(\phi, W)$ with associated character $\theta$. We would like to know when $Ind_H^GW$ 
is an irreducible representation of $G$, equivalently when the corresponding character 
$Ind_H^G\theta$ is an irreducible character.  
{\color{teal}
\begin{definition}
Two representations $(\rho_1, V_1)$ and $(\rho_2, V_2)$ of $G$ are said to be disjoint if $V_1$ and 
$V_2$ do not have a common irreducible representation when they are written as a sum of irreducible 
ones. 
\end{definition}}
\begin{proposition}
The representations $(\rho_1, V_1)$ and $(\rho_2, V_2)$ of $G$ are disjoint if their corresponding characters are orthogonal.
\end{proposition}
\begin{proof}
The characters of $\rho_1$ and $\rho_2$ will not have any common irreducible character when they are written as a sum of irreducible ones. Now the result follows using the orthogonality relations.
\end{proof}
{\color{red}
\begin{theorem}[Mackey]\label{mackey-irr}
Let $H$ be a subgroup of $G$ and $(\phi, W)$ be a representation of $H$. For $s\in G$, define the 
subgroups $H_s = sHs^{-1}\cap H \leq H$, and a representation $(\phi_s, W_s)$ of $H_s$ where $W_s=W$ 
with the action $\phi_s \colon H_s\rightarrow \GL(W)$ given by $\phi_s(x)=\phi(s^{-1}xs)$. Then, the 
representation $Ind_H^G W$ is irreducible if and only if
\begin{enumerate}
\item $W$ is an irreducible representation of $H$, and
\item for all $s\in G - H$ the representations $(\phi_s, W_s)$ and $Res_{H_s}^H(\phi, W)$ of 
$H_s$ are disjoint.
\end{enumerate}
\end{theorem}}
\begin{proof}
To prove this consider the corresponding characters. Let $\theta$ be the character of $\phi$. Then, $Ind_H^G W$ is irreducible if and only if $\langle Ind_H^G\theta, Ind_H^G\theta \rangle_G = \langle \theta, Res_H^G Ind_H^G\theta \rangle_H = 1$. Now using Theorem~\ref{ind-res} $Res_H^G Ind_H^G\theta = \displaystyle\sum_{s\in H\backslash G/H} Ind_{H_s}^H \theta_s$ where $\theta_s$ is the character of $\phi_s$. Thus, $Ind_H^G W$ is irreducible if and only if 
\begin{eqnarray*} 1 &=&  \langle Ind_H^G\theta, Ind_H^G\theta \rangle_G = \langle \theta, \sum_{s\in H\backslash G/H} Ind_{H_s}^H \theta_s \rangle_H \\
&=& \displaystyle \sum_{s\in H\backslash G/H} \langle \theta, Ind_{H_s}^H \theta_s \rangle_{H} = \displaystyle \sum_{s\in H\backslash G/H} \langle Res_{H_s}^H\theta, \theta_s \rangle_{H_s} 
\end{eqnarray*} 
(we use reciprocity for $H$ and $H_s$ here). Now each term in the last sum is a non-negative integer and for $s=e$ we have the corresponding term $\langle Res_{H_s}^H\theta, \theta_s \rangle_{H_s} = \langle \theta, \theta \rangle_{H} \geq 1$. Thus, $Ind_H^G W$ is irreducible if and only if  $\langle Ind_H^G\theta, Ind_H^G\theta \rangle_G = 1$ if and only if $\langle \theta, \theta \rangle_{H} = 1$ and $\langle Res_{H_s}^H\theta, \theta_s \rangle_{H_s} =0$. These are precisely the required conditions.
\end{proof}
A particular case of the above situation arises when we have a normal subgroup of $G$. In this case, 
we can rephrase the above condition and eventually get what is called {\bf Wigner's little group 
method}. This is applied to get irreducible representations of a semidirect product.

\begin{corollary}
Suppose $N$ is a normal subgroup of $G$  and $(\phi, W)$ is a representation of $N$. For any $s\not\in N$ define representations of $N$ by $\phi_s(x)=\phi(s^{-1}xs)$. Then, $Ind_N^G W$ is irreducible if and only if 
\begin{enumerate}
 \item $\phi$ is an irreducible representation of $N$, and, 
 \item $\phi$ and $\phi_s$ are not isomorphic for any $s\in G- N$.   
\end{enumerate}
\end{corollary}
\noindent Now we summarise the above so that we can apply it in a practical situation.

\subsection{{\bf Summary (steps to check irreducibility): }}\label{check-irr}

{\bf What's Given: } 
\begin{itemize}
          \item $H$ a subgroup of $G$, and 
          \item $(\phi, W)$ a representation of $H$.
\end{itemize}

{\bf Want: } to understand when $Ind_H^G W$ is an irreducible representation of $G$.

{\bf Steps: } 
\begin{itemize}
\item Check if $\phi$ is an irreducible representation of $H$. If yes proceed further.
\item Choose $S$ a set of representatives of the double cosets $H\backslash G/H$.
\item For all $s\in S$, construct the subgroups $H_s$ of $H$ and its representation $\phi_s$ on $W$ 
(denoted as $W_s$ to indicate a different action). Note if $H$ is normal all $H_s=H$ but $\phi_s$ 
could still be different. Also, for $s=e$ the subgroup $H_s=H$ and $\phi_s=\phi$.
\item Check if $(\phi_s, W_s)$ and $Res_{H_s}^H(\phi, W)$ of $H_s$ are disjoint for all $s\neq e$. 
If yes, the induced representation is irreducible.
\end{itemize}

\subsection{{\bf Summary (steps to check irreducibility when we have a normal subgroup): }}

{\bf Given: } 
\begin{itemize}
          \item $N$ a normal subgroup of $G$, and 
          \item $(\phi, W)$ a representation of $N$.
\end{itemize}

{\bf Want: } to understand when $Ind_N^G W$ is an irreducible representation of $G$.

{\bf Steps: } 
\begin{itemize}
\item Check if $\phi$ is an irreducible representation of $N$. If yes, proceed further.
\item Choose $S$ a set of representatives of the double cosets $N\backslash G/N$.
\item For all $s\in S$, consider the conjugate representations $\phi_s$ on $W$ of $N$ (denoted as 
$W_s$ to indicate a different action). That is, $\phi_s\colon N \rightarrow \GL(W)$ is given by 
$\phi_s(x)=\phi(s^{-1}xs)$. 
\item Check if the representations $\phi_s$ and $\phi$ of $N$ are not isomorphic for all $s\neq e$. 
This can be done by computing their character. If yes, the induced representation $Ind_N^G W$ is 
irreducible.
\end{itemize}

\begin{exercise}\label{dihedral-ind}
Consider the Dihedral group $D_n=\langle a, b \mid a^n=1=b^2, ab=ba^{-1}\rangle$ with $n>2$ and the 
subgroup $N=\langle a \rangle$. 
\begin{enumerate}
\item $N$ is a normal subgroup of $G$. 
\item There are $n$ $1$-dimensional representations of $N$ as $N\cong \mathbb Z/n\mathbb Z$.
\item We can choose $N\backslash D_n/N =\{e, b\}$. 
\item Given $\phi$ a $1$-dimensional non-trivial representation (that is $\phi(a)\neq 1$) of $N$ we 
define the representation $\phi_b$ of $N$ by $\phi_b(x)=\phi(b^{-1}xb)$. Check 
$\phi_b(a)=\phi(bab)=\phi(a^{-1})$. Hence, $\phi$ and $\phi_b$ are not isomorphic if and only if 
$\phi\neq \bar\phi$ (which is if and only if $\phi(a)\neq \pm 1$).
\item Similarly, check the same for any $\phi_{ba^i}$ and note that $\phi$ and $\phi_{ba^i}$ 
are not isomorphic. Although this is not really needed at the end. 
\item  Show, using Mackey's criteria, that the induced representation $Ind_N^G\phi$ is an 
irreducible representation where $\phi(a)\neq \pm 1$ is a primitive $n$-th root of unity.
\end{enumerate}
\end{exercise}

\begin{exercise}
Consider the group $\SL_2(q)$ and the Borel subgroup 
$$\B=\left\{\begin{pmatrix}a&b\\&a^{-1} \end{pmatrix}\mid a\in \mathbb F_q^*, b\in \mathbb 
F_q\right\}.$$
\begin{enumerate}
 \item We can choose $\B\backslash \SL_2(q)/\B =\left\{I, s=\begin{pmatrix}&-1\\1& 
\end{pmatrix}\right\}$.
\item Fix a group homomorphism $\omega\colon \mathbb F_q^* \rightarrow \mathbb C^*$. Consider the 
$1$-dimensional representation $\phi \colon \B \rightarrow \mathbb C$ given by 
$$\phi\begin{pmatrix}a&b\\&a^{-1} \end{pmatrix} = \omega(a).$$ 
Verify that it's a representation.
\item Compute $\B_s = \B\cap s\B s^{-1} = T= \left\{\begin{pmatrix}a&\\&a^{-1} \end{pmatrix}\mid a\in 
\mathbb F_q^*\right\}$.  
\item Then, $Res_{\B_s}^{\B}\phi$ is given by $Res_{\B_s}^{\B}\phi \begin{pmatrix}a&\\&a^{-1} 
\end{pmatrix} = \omega(a)$.
\item Now, $\phi_s \colon \B_s \rightarrow \mathbb C$ is given by $\phi_s(x) = \phi(s^{-1}xs)$. 
Check, $\phi_s \begin{pmatrix}a&\\&a^{-1} \end{pmatrix} = \phi \begin{pmatrix}a^{-1}&\\&a 
\end{pmatrix} = \omega(a^{-1})$. 
\item Thus, $\phi$ is irreducible if and only if $Res_{\B_s}^{\B}\phi\neq \phi_s$ if and only if 
$\omega^2\neq 1$. 
 \end{enumerate}
\end{exercise}

\chapter{Representations of a Semidirect Product - Wigner's Little Group Method}

Let $G$ be a group with $A$ a normal Abelian subgroup. Suppose $G=A\rtimes H$ for some subgroup $H$. Hence we have $A\cap H=1$ and $G=AH$. Now, all irreducible representations of $A$ are $1$-dimensional. Let $\hat A$ be the set of all characters (equivalently $1$-dim irreducible representations) of $A$. Define an action of $G$ on $\hat A$ as follows:
Let $\chi\in \hat A$. Now, for a given $g \in G$ we define $\chi^g \colon A \rightarrow \mathbb C^*$ by $\chi^g(x) = \chi(g^{-1}xg)$ which is again a character. 
\begin{exercise}
Verify that $G\times \hat A \rightarrow \hat A$ given by $(g, \chi) \mapsto \chi^g$ is an action.
\end{exercise}
This action can be restricted to $H$ and thus $H$ acts on the set of characters of $A$ as follows: $H\times \hat A \rightarrow \hat A$ given by $h.\chi\mapsto \chi^h$. Let us denote the stabiliser of $\chi$ by $H_{\chi}= \{h\in H \mid \chi^h=\chi\}$. This allows us to construct a subgroup of $G$, 
$$G_{\chi}= A.H_{\chi} \cong A\rtimes H_{\chi}$$
called the {\bf little groups}. Now, the character $\chi\in \hat A$ can be trivially extended to $G_{\chi}$ as follows: $\chi\colon G_{\chi} \rightarrow \mathbb C^*$ given by $\chi(a.h)=\chi(a)$ for all $a\in A$ and $h\in H_{\chi}$. Thus, $\chi$ is a character of $G_{\chi}$ of dimension $1$.
\begin{exercise}
Understand all of the above for the dihedral group $\mathbb Z/n\mathbb Z \rtimes \mathbb Z/2\mathbb Z$. 
\end{exercise}

Now, let $\rho$ be an irreducible representation of $H_{\chi}$. Again, using the projection map $G_{\chi} \rightarrow H_{\chi}$ given by $a.h\mapsto h$ we can get an irreducible representation $\rho$ of $G_{\chi}$. Let us denote the representation $\theta_{\chi,\rho}:= Ind_{G_{\chi}}^G \chi\otimes \rho$ of $G$.
{\color{red}
\begin{theorem} Let $G\cong A\rtimes H$ where $A$ is a normal Abelian subgroup of $G$. For $\chi\in \hat A$ and $\rho$ an irreducible representation of $H$, extend them to the ``little subgroups" $G_{\chi}$. Now, define $\theta_{\chi,\rho} =Ind_{G_{\chi}}^G \chi\otimes \rho$ representations of $G$. Then, 
\begin{enumerate}
\item $\theta_{\chi,\rho}$ is irreducible.
\item If $\theta_{\chi,\rho}$ and $\theta_{\chi',\rho'}$ are isomorphic then $\chi, \chi'$ are in the same orbit of $\hat A/H$ and $\rho\cong \rho'$.
\item Every irreducible representation of $G$ is isomorphic to one of the $\theta_{\chi,\rho}$.
\end{enumerate}
\end{theorem}}
We leave proof of this to the reader.
\begin{exercise}
Consider the affine group $A(q)=\left\{\begin{pmatrix} a & b \\ 0&1 \end{pmatrix}\right\}\subset  
\GL_2(q)$. Write down all irreducible representations of $A(q)$. 
\end{exercise}
\begin{exercise}
Consider the Borel subgroup $\B=\left\{\begin{pmatrix} a & b \\ 0&d \end{pmatrix}\right\}\subset  
\GL_2(q)$. Write down all irreducible representations of $\B$. 
\end{exercise}
\begin{exercise}
Consider the group $ZU=\left\{\begin{pmatrix} a & b \\ 0&a \end{pmatrix}\right\}\subset  \GL_2(q)$. 
Write down all irreducible representations of $ZU$. 
\end{exercise}
\section{Representations of groups of order $pq$ and $p^3$}
In this section, $p, q$ are primes and $p\neq q$. We know any group of order $p$ is cyclic and any group of order $p^2$ is Abelian. Thus, all irreducible representations of such groups are $1$-dimensional. The representation theory of such groups can be done easily following the theory 
in Chapter~\ref{rep-abelian}. Thus, we will deal with non-Abelian groups here.

\subsection{Groups of order $pq$}

Let $G$ be a group of order $pq$. We may assume $q< p$. When $q\nmid (p-1)$ it can be shown that the group is Abelian, in fact, cyclic (See Example on page 135, 143 and 181~\cite{df}). 

Thus, we deal with the case when $q\mid (p-1)$ and $G$ are non-Abelian. Denote by $r=\frac{p-1}{q}$. In this case, there is a unique non-Abelian group, up to isomorphism, of order $pq$. Such groups can be also realised as matrix subgroup in the following way.

\begin{example}
Consider the finite field $\mathbb F_p$ and the group 
$$\mathcal G=\left \{\begin{pmatrix} 1 & x \\ & y \end{pmatrix} \mid x\in \mathbb F_p, y\in \mathbb F_p^*\right\}.$$ 
\begin{enumerate}
\item Show that $\mathcal G$ is a group of order $p(p-1)$. 
\item Fix $\zeta$ to be a generator of the cyclic group $\mathbb F_p^*$. Take $X =\begin{pmatrix} 1 &  \\ & \zeta \end{pmatrix}$ and $Y =\begin{pmatrix} 1 & 1 \\ & 1 \end{pmatrix}$. What are the orders of $X$ and $Y$? Show that $X$ and $Y$ generate $\mathcal G$?
\item Take $A=X^{r}$ and $B=Y$. Then show that $G=<A,B>$ is a nonabelian subgroup of $\mathcal G$ of order $pq$. 
\item Show that the number of conjugacy classes of $G$ is $q+r$. 
\item Show that the commutator subgroup $[G,G]=<B>$ is of order $p$. Hence, the number of $1$-dimensional representations of $G$ is $\frac{pq}{p}=q$. 
\item Use the $\det\colon G \rightarrow \mathbb F_p^*$ and show that image size is $q$. Thus, we can lift $1$-dimensional representations to get $q$ $1$-dimensional representations of $G$. 
\end{enumerate}
\end{example}

\begin{example}
Consider the group of order $21$, $G=\mathbb Z/3\mathbb Z \ltimes \mathbb Z/7\mathbb Z$.  
\end{example}

We note that $G=\mathbb Z/q\mathbb Z \ltimes \mathbb Z/p\mathbb Z$ which can be explicitly written as follows: 
$$G = \langle a,b \mid a^q=1=b^p, ab=ba^l \rangle$$
where $l$ is an element of order $q$ in $\mathbb F_p^*$. 
In this case, we will see that the remaining $r$ irreducible representations are each of dimension $q$. These are obtained as induced representations of $1$-dimensional representations of the subgroup $<B>$. 
\begin{exercise}
Complete the last step.
\end{exercise}
{\color{red}
\begin{theorem} Let $G$ be a non-Abelian group of order $pq$ where $p,q$ are prime and $q\mid (p-1)$. Then, $G$ has $q$ irreducible representations of dimension $1$ and $\frac{p-1}{q}$ irreducible representations of dimension $q$. 
\end{theorem}
}

\subsection{Groups of order $p^3$}
We deal with the non-Abelian groups of order $p^3$. When $p=2$ its either $D_4$ or $Q_8$ and we have already seen the character table of these groups. So, we may assume $p$ is odd. Once again, up to isomorphism, there are $2$ such groups. We recommend well-written online notes by Conrad~\cite{Co} or~\cite{df} for a concrete description of these groups.

We only require the following structural description:
\begin{proposition}
Let $G$ be a non-Abelian group of order $p^3$ with center $Z$. Then, $|Z|=p$, $[G,G]=Z$, and $G/Z\cong \mathbb Z/p\mathbb Z \times \mathbb Z/p\mathbb Z$.
\end{proposition}

Thus, such groups have $p^2$ representations of dimension $1$ obtained as a lift of $G/Z$. The remaining ones are $p-1$ of them and each of degree $p$.   
\begin{exercise}
Complete the last step. These representations are obtained as induced representations of an Abelian normal subgroup $N=<a,Z>$ of order $p^2$ where $a\in G-Z$.
\end{exercise}
Note that the character table of both non-isomorphic groups would be the same as we have noted similar phenomena for groups of order $8$.

\begin{exercise}
Collect the examples of groups where only two distinct character degrees are possible. In fact, the classification of such groups is known which you may explore further. 
\end{exercise}
\chapter{Characters of the group $\GL_2(q)$}
All representations considered here are over $\mathbb C$.
We consider the finite field $\mathbb F_q$, with $q \geq 5$ odd (this is simply for the convenience of some calculations involved). The group 
$$\GL_2(q)=\left\{X\in M_2(\mathbb F_q)\mid \det(X)\neq 0 \right\}$$ and 
$$\SL_2(q)=\left\{X\in M_2(\mathbb F_q)\mid \det(X)=1 \right\}.$$

\begin{exercise} Let us warm up with some basic calculations.
\begin{enumerate}
\item Show that $|\GL_2(q)| = (q^2-1)(q^2-q)$ and $|\SL_2(q)| = (q^2-1)(q-1)$.
\item Show that $\det \colon \GL_2(q) \rightarrow \mathbb F_q^*$ is a surjective group homomorphism 
with kernel the group $\SL_2(q)$.
\item Show that the center $\mathcal Z(\GL_2(q))=\{\lambda.I\mid \lambda \in \mathbb F_q^*\}$, the 
scalar matrices.
\end{enumerate}
\end{exercise}

 Now define $x_{12}(t):= \begin{pmatrix} 1&t\\ 0 &1\end{pmatrix}$ and $x_{21}(s):= \begin{pmatrix} 1& 0 \\s &1\end{pmatrix}$ for $t, s\in k$. The matrices $x_{12}(t)$ and $x_{21}(s)$ are called the {\bf elementary matrices}.
\begin{exercise} Let us show that the elementary matrices generate the group $\SL_2(q)$.
\begin{enumerate}
\item Compute $w(t)=x_{12}(t)x_{21}(-t^{-1})x_{12}(t)$. What is $w(-1)$? Compute 
$h(t)=w(t)w(-1)$.
\item For $c\neq 0$, verify the following for an element in $\SL_2(q)$, 
$$\begin{pmatrix}a&b\\c&d\end{pmatrix}= \begin{pmatrix}1&(a-1)c^{-1}\\&1\end{pmatrix} 
\begin{pmatrix}1&\\c&1\end{pmatrix} \begin{pmatrix}1&(d-1)c^{-1}\\ &1\end{pmatrix}.$$
\item Show that the group $\SL_2(q)$ is generated by the set of all elementary matrices 
$\{x_{12}(t), x_{21}(s) \mid t,s\in \mathbb F_q\}$.
\end{enumerate}
\end{exercise}

\begin{exercise} Let us compute the $1$-dimensional representations.
\begin{enumerate}
\item Show that $[\SL_2(q), \SL_2(q)]=1$ and hence the only $1$ dimensional representation of 
$\SL_2(q)$ is the trivial one.
\item Show that $[\GL_2(q), \GL_2(q)]=\SL_2(q)$ and hence $\GL_2(q)$ has $q-1$ representations of 
dimension $1$. 
\item Since $\mathbb F_q^* \cong \mathbb Z/(q-1)\mathbb Z$ (why?) we have $q-1$ representations of 
dimension $1$. This would give $q-1$ representations of $\GL_2(q)$ of dimension $1$.
\end{enumerate}
\end{exercise}

\section{Conjugacy classes in $\GL_2(q)$}
 Let us try to determine the number of conjugacy classes in this group.
The conjugacy classes in $\GL_2(q)$ are as follows. This can be determined by rational canonical 
form theory. Let $\mathfrak M(x)$ and $\mathfrak C(x)$ be the minimal and characteristic polynomial 
respectively. Clearly, $\mathfrak C(x)$ could be any degree $2$ polynomial with non-zero constant 
term and  $\mathfrak M(x)$ could be any polynomial of degree $\leq 2$. 

{\bf Central type:} When $\mathfrak C(x)=(x-\lambda)^2$ and $\mathfrak M(x)=(x-\lambda)$, this gives the conjugacy classes of central matrices $\{\lambda I\}$ for $\lambda\in \mathbb F_q^*$. There are total $q-1$ such conjugacy classes each of size $1$.

{\bf Split semisimple type:} When $\mathfrak C(x)=(x-a)(x-b) = \mathfrak M(x)$. This corresponds to the conjugacy classes of diagonal matrices $\begin{pmatrix} a & \\ &b\end{pmatrix}$ where $a, b\in \mathbb F_q^*$ and $a\neq b$ (so as they are excluded from the central type). There are total $\frac{(q-1)(q-2)}{2}$ such classes each of size $q(q+1)$. Note that the centralizer of such elements is all of the diagonal matrices hence of size $(q-1)^2$.

{\bf Anisotropic type:} When $\mathfrak C(x)=x^2+ ax+ b= \mathfrak M(x)$ is irreducible. These elements are those of which minimal polynomial and characteristic polynomials are equal and irreducible of degree $2$ and a representative is the companion matrix $\begin{pmatrix} 0 & -b \\ 1&-a\end{pmatrix}$. These correspond to picking an element in $\mathbb F_{q^2}$ which is not in $\mathbb F_q$ and two of these will give the same class. Thus, there are $\frac{q^2-q}{2}$ such classes. The centralizer of such an element is polynomials in the same matrix and hence $\cong \mathbb F_{q^2}^*$ of size $q^2-1$. Thus, the conjugacy class size is $q^2-q$.

{\bf Unipotent type:} When $\mathfrak C(x)=(x-\lambda)^2$ and $\mathfrak M(x)=(x-\lambda)^2$.  This corresponds to the conjugacy classes of Jordan matrices $\begin{pmatrix} \lambda & 1\\ &\lambda \end{pmatrix}$ where $\lambda \in \mathbb F_q^*$ and the centraliser is of size $q(q-1)$ and hence the conjugacy class size is $q^2-1$. There are $q-1$ such types.

Thus we get,
{\color{red}
\begin{theorem}
The total number of conjugacy classes in $\GL_2(q)$ is 
$$(q-1) + \frac{(q-1)(q-2)}{2} + \frac{q^2-q}{2} + q-1 = (q-1) (2+ \frac{q-2}{2}+\frac{q}{2})=(q-1)(q+1).$$
\end{theorem}}

So, far we have found $q-1$ characters each of dimension $1$. We need to find more irreducible 
characters. Let us set some notation here. We have $\det \colon \GL_2(q) \rightarrow \mathbb F_q^*$ 
given by $A\mapsto \det(A)$. Now, if $\psi\colon \mathbb F_q^* \rightarrow \mathbb C^*$ is a 
representation then lifting this to $\GL_2(q)$ we get, $\psi\circ\det$. 
\begin{proposition}
The group $\GL_2(q)$ has $q-1$ representations of dimension $1$. These are given by $\psi\circ\det$ 
where $\psi\colon \mathbb F_q^* \rightarrow \mathbb C^*$ is a group homomorphism.
\end{proposition}

\section{Parabolic Induction - preparation}
Now we consider certain representations of a parabolic subgroup and induce these representations in the whole group. In our case, the parabolic subgroup we consider is simply the Borel subgroup of upper triangular matrices. In this process, we get several irreducible representations.

Let us consider the subgroup $\B=\left\{\begin{pmatrix} a & b \\ 0&d \end{pmatrix}\right\}\subset 
\GL_2(q)$, namely the Borel subgroup. Fix group homomorphisms $\psi, \phi \colon \mathbb F_q^* 
\rightarrow \mathbb C^*$. These are $1$-dimensional representations of the cyclic group $\mathbb F_q^*$. This can be done by fixing a primitive $(q-1)$th root of unity, say 
$\zeta$. Then all characters of $\mathbb F_q^*$ (being a cyclic group of order $q-1$) will be 
given by mapping a generator of $\mathbb F_q^*$ to $\zeta, \zeta^2, \ldots, \zeta^{q-1}=1$. Now, 
using $\psi, \phi$ we define a representation 
$\rho_{\psi, \phi} \colon \B \rightarrow \mathbb C^*$ given by 
$$\rho_{\psi,\phi} \begin{pmatrix} a & b \\ 0&d \end{pmatrix} = \psi(a)\phi(d) .$$

\begin{exercise}
Show that $\rho_{\psi,\phi}$ is a representations of $\B$. That is, show that these are group 
homomorphisms. 
\end{exercise}
\begin{exercise}
Further, show that these are all possible $1$-dimensional representations of $\B$. 
\end{exercise}
\noindent Now we deal with the two cases $\psi\neq \phi$ and $\psi = \phi$ separately. We induce the 
representation $\rho_{\psi,\phi}$ of $\B$ to $\GL_2(q)$ and analyse them further.
In the first case when $\psi\neq \phi$ the induced representations turn out to be irreducible and are called the principal series representations. In the second case when $\psi=\phi$, the induced representations are not irreducible and these turn out to be a direct sum of a $1$-dimensional representation what is called the Steinberg representations.

\section{The principal series representations}

We continue the notation from the previous section. Now define the induced representations $\hat\rho_{\psi,\phi} = 
Ind_{\B}^{\GL_2(q)}\rho_{\psi,\phi}$. We want to determine when these representations are 
irreducible.
\begin{exercise}
The dimension of the representation $\hat\rho_{\psi,\phi}$ is $q+1$. This follows by a straightforward computation (see Proposition~\ref{ind-dimension}). 
\end{exercise}

\noindent Now we use the algorithm~\ref{check-irr} to check the irreducibility of these induced 
representations. Before that, we need to determine $\B \backslash \GL_2(q)/ \B$.
\begin{exercise}
Show that we can choose $\left\{I, s=\begin{pmatrix}&1\\1& \end{pmatrix}\right\}$ as a set of 
representatives for $\B\backslash \GL_2(q)/ \B$. This is equivalent to applying row-column 
operations on a matrix where we are not allowed row-column flipping. Thus, in the end, we will be 
left with one of these kinds of matrices.
\end{exercise}

\noindent Now compute $\B_s$ as follows:
\begin{exercise}
 $\B_s = \B \cap s\B s^{-1} = \mathbb T= \left\{\begin{pmatrix}a&\\&d \end{pmatrix}\mid a,d \in 
\mathbb F_q^*\right\}$. This subgroup $\mathbb T$ is called a {\bf split maximal torus}. 
\end{exercise}
\begin{exercise}
Since $\mathbb T=\mathbb F_q^* \times \mathbb F_q^*$ the irreducible representations are obtained by taking a product of the individual components which themselves are cyclic. We see that these representations are given by $\{\psi.\phi\}$ where $\psi,\phi$ are characters of $\mathbb F_q^*$ and the map $(\psi.\phi)(a,b)=\psi(a)\phi(b)$.
\end{exercise}
\begin{exercise}
Then, $Res_{\B_s}^{\B} \rho_{\psi,\phi}$ is a map on $\B_s=  \mathbb T$ given by 
$$(Res_{\B_s}^{\B}\rho_{\psi,\phi}) \begin{pmatrix}a&\\& d \end{pmatrix} = \psi(a)\phi(d).$$ 
\end{exercise}
\noindent Let us compute the conjugate maps and note the subtle difference.

\begin{exercise}
Now, $(\rho_{\psi,\phi})_s \colon \B_s \rightarrow \mathbb C$ is a map on $\B_s=\mathbb T$ given by 
$(\rho_{\psi,\phi})_s(x) = \rho_{\psi,\phi}(s^{-1}xs)$. 
Check, $$(\rho_{\psi,\phi})_s \begin{pmatrix}a&\\& d \end{pmatrix} = \rho_{\psi,\phi} 
\begin{pmatrix} d&\\&a  \end{pmatrix} = \psi(d)\phi(a) = Res_{\B_s}^{\B}\rho_{\phi,\psi}.$$  
\end{exercise}

\noindent Now we are ready to prove the following:
\begin{proposition}\label{principal-series}
When $\psi\neq \phi$ the representation $\hat\rho_{\psi,\phi} = 
Ind_{\B}^{\GL_2(q)}\rho_{\psi,\phi}$ 
is irreducible of dimension $q+1$.  
\end{proposition}
\begin{proof}
From Mackey's criteria Theorem~\ref{mackey-irr}, the representation $\hat\rho_{\psi,\phi} = 
Ind_{\B}^{\GL_2(q)}\rho_{\psi,\phi}$ is irreducible if and only if 
\begin{enumerate}
 \item $\rho_{\psi,\phi}$ is irreducible (which it is by being $1$-dimensional) representation of 
$\B$. 
 \item $Res_{\B_s}^{\B} \rho_{\psi,\phi}$ and $(\rho_{\psi,\phi})_s$ are disjoint (for this we need 
to check if these are equal or not as they are $1$-dimensional) representations of $\B_s=\mathbb T$.  
\end{enumerate}
Hence, we get $\hat\rho_{\psi,\phi} = Ind_{\B}^{\GL_2(q)}\rho_{\psi,\phi}$ is irreducible if and 
only 
if $Res_{\B_s}^{\B} \rho_{\psi,\phi} \neq (\rho_{\psi,\phi})_s$ as maps. Since we are given $\phi\neq 
\psi$ we get the result. 
\end{proof}

However, we need to also understand if we get distinct irreducible representations. These are certainly different from the earlier ones obtained so far which were $1$-dimensional as these are $(q+1)$-dimensional. 
\begin{proposition}
When $\psi_1\neq \phi_1$ and $\psi_2\neq  \phi_2$, the representations $\hat\rho_{\psi_1,\phi_1} 
\cong \hat\rho_{\psi_2,\phi_2}$ if and only if $(\psi_1, \phi_1) = (\psi_2, \phi_2)$ or $(\psi_1, 
\phi_1) = (\phi_2, \psi_2)$. 
Thus, there are total $\frac{(q-1)(q-2)}{2}$ distinct such irreducible representations. 
\end{proposition}
\begin{proof}
In view of Proposition~\ref{ind-universal} we need to determine, 
\begin{eqnarray*}
Hom_{\GL_2}(\hat\rho_{\psi_1,\phi_1}, \hat\rho_{\psi_2,\phi_2}) &=& 
Hom_{\GL_2}(Ind_{\B}^{\GL_2(q)}\rho_{\psi_1,\phi_1}, Ind_{\B}^{\GL_2(q)}\rho_{\psi_2,\phi_2}) \\
&=& Hom_\B(\rho_{\psi_1,\phi_1}, Res_\B^{\GL_2(q)}Ind_{\B}^{\GL_2(q)}\rho_{\psi_2,\phi_2}) {\rm\ 
Using\  reciprocity}\\
&=& Hom_\B(\rho_{\psi_1,\phi_1}, \rho_{\psi_2,\phi_2} \oplus Ind_{\B_s}^{\B}(\rho_{\psi_2,\phi_2})_s)\\
&=& Hom_\B(\rho_{\psi_1,\phi_1}, \rho_{\psi_2,\phi_2}) \oplus Hom_\B(\rho_{\psi_1,\phi_1}, 
Ind_{\B_s}^{\B}(\rho_{\psi_2,\phi_2})_s) \\ 
&=& Hom_\B(\rho_{\psi_1,\phi_1}, \rho_{\psi_2,\phi_2}) \oplus Hom_{\B_s}( 
Res_{\B_s}^{\B}\rho_{\psi_1,\phi_1}, (\rho_{\psi_2,\phi_2})_s) 
\end{eqnarray*}
where we use reciprocity again at the last step. 
The first term on the right here is either $1$ or $0$ dimension depending on $\rho_{\psi_1,\phi_1} 
$ is isomorphic to $\rho_{\psi_2,\phi_2}$ or not as both are irreducible representations. Thus, 
$\dim Hom_\B(\rho_{\psi_1,\phi_1}, \rho_{\psi_2,\phi_2}) = 1$ if $\psi_1=\psi_2$ and $\phi_1=\phi_2$ 
and $0$ otherwise. 

The second term has representations of $\B_s=\mathbb T$ given by $\diag(a,d) \mapsto \psi_1(a)\phi_1(d)$ and 
$\diag(a,d) \mapsto \psi_2(d)\phi_2(a)$. These two representations are equal if and only if 
$\psi_1=\phi_2$ and $\psi_2=\phi_1$. Thus, $\dim Hom_{\B_s}(Res_{\B_s}^{\B}\rho_{\psi_1,\phi_1}, 
(\rho_{\psi_2,\phi_2})_s  = 1$ if $\psi_1=\phi_2$ and $\phi_1=\psi_2$,  and $0$ otherwise.
\end{proof}

Thus so far we get $(q-1)$ irreducible representations of dimension $1$ and $\frac{(q-1)(q-2)}{2}$ irreducible representations, namely the principal series, of dimension $q+1$.

\section{The Steinberg Representations}

We continue with the setup in the previous section and discuss what happens when $\psi=\phi$. We have,
$\rho_{\psi} :=\rho_{\psi,\psi}\colon \B \rightarrow \mathbb C^*$ given by 
$$\rho_{\psi} \begin{pmatrix} a & b \\ 0&d \end{pmatrix} = \psi(ad) .$$
We induce this representation and get $\hat\rho_{\psi} = Ind_{\B}^{\GL_2(q)}\rho_{\psi}$ which is a 
representations of dimension $(1+q)$. 
We want to determine if these representations give rise to irreducible ones. It turns out that these 
are not irreducible but can be easily broken to get some new irreducible representations, called the 
Steinberg representations. We can use Mackey's criteria as in the proof or 
Proposition~\ref{principal-series} and show that $\hat\rho_{\psi} = Ind_{\B}^{\GL_2(q)}\rho_{\psi}$ 
are not irreducible as $Res_{\B_s}^\B 
\rho_{\psi}$,  $(\rho_{\psi})_s$ both are same and given by 
$$\begin{pmatrix}a&\\& d \end{pmatrix} \mapsto \psi(ad).$$

\begin{exercise}
Show that, $(\rho_{\psi})_s \colon \B_s=\mathbb T \rightarrow \mathbb C$ is given by 
$(\rho_{\psi})_s(x) = \rho_{\psi}(s^{-1}xs)$. 
Check, $$(\rho_{\psi})_s \begin{pmatrix}a&\\& d \end{pmatrix} = \rho_{\psi} 
\begin{pmatrix} d&\\&a  \end{pmatrix} = \psi(ad) = Res_{\B_s}^{\B}\rho_{\psi}.$$  
\end{exercise}

\begin{exercise}
We may also note that if $\psi$ is a trivial character then so is $\rho_{\psi}$, i.e, $\rho_\psi(A)=1$ for all $A\in\B$. Does the induced representation $\hat\rho_{\psi}$ sound familiar in this case? 
\end{exercise}
\noindent Now, let us try to understand the decomposition of $\hat\rho_{\psi} = 
Ind_{\B}^{\GL_2(q)}\rho_{\psi}$. 
First, we determine how many irreducible representations are involved in $\hat\rho_{\psi}$ with the help of Proposition~\ref{ind-universal}.
\begin{proposition}
$\hat\rho_{\psi}$ is a sum of exactly two irreducible representations.
\end{proposition}
\begin{proof} 
Suppose, $\hat\rho_{\psi} = Ind_{\B}^{\GL_2(q)}\rho_{\psi}=m_1\rho_1+\cdots +m_h\rho_h$ written as a 
sum of irreducible representations. We need to determine $m_i$s. From the Maschke's Theorem if a 
module $V$ is written as a direct sum of irreducibles $V=\bigoplus W_i^{m_i}$ then $Hom_{\mathbb 
C[G]}(W,W) = \bigoplus M_{m_i}(End_{\mathbb C[G]}(W_i))= \bigoplus M_{m_i}(\mathbb C)$. Thus, $\dim 
Hom_{\mathbb C[G]}(W,W)=\sum m_i^2$ can help us determine $m_i$s.

In view of this, we need to compute the $\dim Hom_{\GL_2}(\hat\rho_{\psi}, \hat\rho_{\psi})$.
\begin{eqnarray*}
Hom_{\GL_2}(\hat\rho_{\psi}, \hat\rho_{\psi}) &=& 
Hom_{\GL_2}(Ind_{\B}^{\GL_2(q)}\rho_{\psi}, Ind_{\B}^{\GL_2(q)}\rho_{\psi}) \\
&=& Hom_\B(\rho_{\psi}, Res_\B^{\GL_2(q)}Ind_{\B}^{\GL_2(q)}\rho_{\psi}) {\rm\ Using\  
reciprocity}\\
&=& Hom_\B(\rho_{\psi}, \rho_{\psi} \oplus Ind_{\B_s}^{\B}(\rho_{\psi})_s)\\
&=& Hom_\B(\rho_{\psi}, \rho_{\psi}) \oplus Hom_\B(\rho_{\psi}, Ind_{\B_s}^{\B}(\rho_{\psi})_s) \\ 
&=& Hom_\B(\rho_{\psi}, \rho_{\psi}) \oplus Hom_{\B_s}( Res_{\B_s}^{\B}\rho_{\psi}, (\rho_{\psi})_s) {\rm\ Using\  reciprocity}\\
&=& Hom_\B(\rho_{\psi}, \rho_{\psi}) \oplus Hom_{\B_s}( Res_{\B_s}^{\B}\rho_{\psi}, Res_{\B_s}^{\B}\rho_{\psi}) {\rm\ Use\ the\ exercise\ above}\\
&=& \mathbb C \oplus \mathbb C.
\end{eqnarray*}
Note that at the last step we have all $1$-dimensional representations (hence are irreducible) of $\B$ and $\B_s=\mathbb T$ respectively.

Thus, $\dim Hom_{\GL_2}(\hat\rho_{\psi}, \hat\rho_{\psi}) =2 $ and using the equation $m_1^2+\cdots 
+ m_h^2=2$, combined with the fact it is not irreducible, we conclude that $1^2+1^2=2$ is the only 
solution. 
\end{proof}

Now, since $\hat\rho_{\psi} = Ind_{\B}^{\GL_2(q)}\rho_{\psi}$ is a sum of two irreducible 
representations we need to determine them. Fortunately, in the present case, we can easily show that 
it has $1$-dimensional representations contained in it and thus the complement would be the required 
irreducible one.

\begin{proposition}
The representation $\hat\rho_{\psi} = Ind_{\B}^{\GL_2(q)}\rho_{\psi}$ contains a copy of the 
$1$-dimensional representation $\psi\circ\det$.
\end{proposition}
\begin{proof}
For this, we need to analyse the representation space. Recall, we begin with the one dimensional 
$\B$-space $W$ and get $V=Ind_\B^{\GL_2(q)} W$.  If $W=<e>$ then the action of $\B$ is given by 
$\rho_{\psi}$. That is, $\begin{pmatrix} a & b \\ 0&d \end{pmatrix}.e = \psi(ad) e$, in another 
words $A.e=\psi(\det(A))e$ for an $A\in \B$. Now, the induced module
$V=\{f\colon \GL_2(q) \rightarrow W \mid f(xA^{-1})=A.f(x), A\in \B\}$ and $\GL_2(q)$-action is 
given by $(gf)(x)=f(g^{-1}x)$. We also remark (which will be immediately useful) that while writing 
an element $g\in \GL_2(q)$ as $g=xA^{-1}$ with $A^{-1}\in \B$ we can take $x$ to be of $\det(x)=1$.

We claim that the function $f_0 \colon \GL_2(q) \rightarrow W$ defined by 
$f_0(g)=\psi(\det(g^{-1}))e$ has the following property:
\begin{enumerate}
\item $f_0$ is in $V$.
\item $<f_0>\subset V$ is a $\GL_2(q)$-invariant subspace of $V$.
\item The action of $\GL_2(q)$ on $f_0$ is multiplication by $\psi\circ\det$.
\end{enumerate}
To check the first one, 
$$f_0(xA^{-1}) = \psi(\det(Ax^{-1}))e = \psi(\det(A)\det(x^{-1}))e  = \psi(\det(A))\psi(\det(x^{-1}))e = A.f_0(x).$$
Now for the remaining ones we note,
$$(g'f_0)(x)=f_0(g'^{-1}x)=\psi(\det(x^{-1}g'))e=\psi(\det(g'))\psi(\det(x^{-1}))e = \psi(\det(g'))f_0(x).$$
\end{proof}

Thus, $\hat\rho_{\psi} = Ind_{\B}^{\GL_2(q)}\rho_{\psi}  \cong (\psi\circ\det) \bigoplus St_{\psi}$ 
where $St_{\psi}$ is an irreducible representation of $\GL_2(q)$ of dimension $(q+1)-1=q$, called 
the {\bf Steinberg representations}.
\begin{proposition}
For every $\psi$, a character of $\mathbb F_q^*$, we have an irreducible representation $St_{\psi}$ 
of dimension $q$. There are $(q-1)$ such irreducible characters of $\GL_2(q)$.
\end{proposition}

\section{Cuspidal representations}

So far, we have found $(q-1)$ irreducible representations of dimension $1$, $\frac{(q-1)(q-2)}{2}$ irreducible representations of dimension $(q+1)$ and $(q-1)$ irreducible representations of dimension $q$. Thus, so far we have found
$$(q-1)+\frac{(q-1)(q-2)}{2}+(q-1)$$
irreducible characters of $\GL_2(q)$. Since the number of irreducible characters is the same as the 
number of conjugacy classes, it remains to find  $\frac{q^2-q}{2}$ irreducible characters. We can 
also try to determine their degrees:
$$\sum_{i=1}^{\frac{q^2-q}{2}} n_i^2 = |G|- \left((q-1).1^2 + \frac{(q-1)(q-2)}{2} (q+1)^2 + (q-1).q^2\right) = \frac{q(q-1)^3}{2}.$$
A possible solution of this equation with $n_i\mid |\GL_2(q)|$ is that each $n_i = q-1$.
This is indeed the case. The remaining representations are said to be {\bf cuspidal representations}. We will discuss that here briefly without going into too much in detail. Once again we look at certain subgroups of which representations we can induce.

Consider the subgroup $ZU=\left\{ \begin{pmatrix} a&b\\ &a \end{pmatrix}\mid a\in \mathbb F_q^*, b\in \mathbb F_q\right\}$.
\begin{exercise}
Compute the centralizer of $u = \begin{pmatrix} 1&1\\ &1 \end{pmatrix}$ and show that it is $ZU$. 
Note that this group is actually the product (not a direct product) of the centre $Z=\mathcal 
Z(\GL_2(q))$ and $U=\left\{ \begin{pmatrix} 1&b\\ &1 \end{pmatrix}\mid b\in \mathbb F_q\right\}$.
\end{exercise}

Now given a character $\psi$ of $\mathbb F_q^*$ (there are $q-1$ such) and $\phi$ of $\mathbb F_q$ (there are $q$ such) we define a representation $\mho_{\psi, \phi}\colon ZU\rightarrow \mathbb C$ as follows:
$$\mho_{\psi, \phi}  \begin{pmatrix} a&b\\ &a \end{pmatrix} = \psi(a)\phi(b).$$
\begin{exercise}
These are all possible $1$-dimensional representations of $ZU$.
\end{exercise}
Now, we define $\tilde\mho_{\psi, \phi} = Ind_{ZU}^{\GL_2(q)}\mho_{\psi, \phi} $ which are of 
dimension $q^2-1$. As it turns out that these are not irreducible. We work with certain $\psi$ and 
show that it can be decomposed as a direct sum of two irreducible representations
$$Ind_{ZU}^{\GL_2(q)}\mho_{\psi, \phi} = Ind_{\mathcal T}^{\GL_2(q)}\psi \oplus \pi_{\psi}$$
  
Let us begin with understanding the anisotropic maximal torus $\mathcal T$. The field $\mathbb 
F_{q^2}$ can be embedded in $M_2(q)$ giving rise to $\mathcal T \subset \GL_2(q)$.
\begin{exercise}
Consider $\mathbb F_{q^2}$ over $\mathbb F_{q}$ as $2$-dimensional vector space. For any, $\theta \in \mathbb F_{q^2}$ consider the left multiplication map on $\mathbb F_{q^2}$ giving rise to a linear map. This embeds $\mathbb F_{q^2}$ in $M_2(q)$.
\end{exercise}
\noindent Now $\mathbb F_{q^2}^*$ is cyclic group of order $q^2-1$ thus has as many $1$-dimensional representations. Consider the non-trivial Galois automorphism $\sigma\colon \mathbb F_{q^2} \rightarrow \mathbb F_{q^2}$ given by $x\mapsto x^q$. This fixed field is $\mathbb F_{q}$. Thus, the maps $\psi \colon \mathbb F_{q^2}^* \rightarrow \mathbb C^*$ also include the ones for $\mathbb F_{q}^*$. We say $\psi$ is a {\bf regular character} of $\mathcal T = \mathbb F_{q^2}^*$ if $\psi\sigma \neq \psi$. Equivalently, $\psi^q\neq \psi$ or $\psi^{q-1}\neq 1$. Thus, these are $\psi$ which are not of order $q-1$, hence total such number would be $(q^2-1)-(q-1) = q^2-q$. Also, not that such $\psi$ would come in pairs $\{\psi, \psi\circ\sigma\}$.

\begin{proposition}
For a regular character $\psi$ of $\mathcal T$, we have $Ind_{ZU}^{\GL_2(q)}\mho_{\psi, \phi} = 
Ind_{\mathcal T}^{\GL_2(q)}\psi \oplus \pi_{\psi}$ where $\pi_{psi}$ is an irreducible 
representation of dimension $q-1$. The $\pi_{\psi}\cong \pi_{\psi'}$ if and only if 
$\psi'=\psi\circ\sigma$.  Thus, these are the remaining $\frac{(q^2-q)}{2}$ irreducible cuspidal 
representations.
\end{proposition}

This section is not complete and a bit of effort is required to complete all the steps. On other hand knowing the irreducible representations, as we do by now, one can simply compute the characters to verify that we get all of them.

\section{Characters of $\GL_2(q)$}

Given the description above write the character table.

\vskip1cm

\end{document}